\documentclass[12pt,reqno]{amsart}
\allowdisplaybreaks[2]

\makeatletter
\@namedef{subjclassname@2020}{%
	\textup{2020} Mathematics Subject Classification}
\makeatother
\usepackage{caption}
\usepackage{amssymb,bm,mathrsfs}
\usepackage{float}
\usepackage{enumerate}
\usepackage{amsfonts}
\usepackage{color}
\usepackage{tikz}
\usepackage{pgfplots}
\pgfplotsset{compat=1.3}
\definecolor{spectrumblue}{RGB}{25,83,133}
\definecolor{spectrumorange}{RGB}{174,78,24}
\definecolor{spectrumgreen}{RGB}{36,112,85}
\usepackage[centering,width=6in]{geometry}
\usepackage[colorlinks,pdfstartview=FitH]{hyperref}

\hypersetup{
	colorlinks,
	citecolor=blue,
	filecolor=black,
	linkcolor=blue,
	urlcolor=black
}

\newtheorem{theorem}{Theorem}[section]
\newtheorem{lemma}[theorem]{Lemma}
\newtheorem{proposition}[theorem]{Proposition}
\newtheorem{corollary}[theorem]{Corollary}
\newtheorem{definition}{Definition}[section]

\theoremstyle{remark}
\newtheorem{remark}{Remark}[section]
\newtheorem{example}{Example}[section]
\numberwithin{equation}{section}

\newcommand{\Pck}{\mathcal P}
\newcommand{\card}{\#}
\DeclareMathOperator{\usdim}{\overline{\mathrm{Dim}}_{\theta}}

\DeclareMathOperator{\u0dim}{\overline{\mathrm{Dim}}_{0}}
\DeclareMathOperator{\uodim}{\overline{\mathrm{Dim}}_{1}}

\DeclareMathOperator{\l0dim}{\underline{\mathrm{Dim}}_{0}}

\DeclareMathOperator\hdim{\mathrm{Dim}_{\mathrm{H}}}
\newcommand{\dist}{\operatorname{dist}}
\newcommand{\diam}{\operatorname{diam}}
\newcommand{\uDim}{\overline{\operatorname{Dim}}}
\newcommand{\lDim}{\underline{\operatorname{Dim}}}

\def\N{\mathbb{N}}
\def\Z{\mathbb{Z}}
\def\R{\mathbb{R}}

\begin{document}

	\title[ Discrete intermediate dimensions]{Discrete intermediate dimensions: endpoints, spectra and quasi-isometric invariance}

	\author{Jinjun Li}
	
	\address[Jinjun Li]{School of Mathematics and Information Science, Guangzhou University, Guangzhou, 510006, P.~R.~China}
	\email{lijinjun@gzhu.edu.cn}
	
	\author[Lifeng Xi]{Lifeng Xi$^\ast$}
	\address[Lifeng Xi]{School of Mathematics and Statistics, Ningbo University,\newline Ningbo, 315211, P.~R.~China}
	\email{xilifeng@nbu.edu.cn}
	
	\author{Shuqin Zhang}
	
	\address[Shuqin Zhang]{School of Mathematics and Information Science, Guangzhou University, Guangzhou, 510006, P.~R.~China}
	\email{zhangshuqin@fudan.edu.cn}

	\thanks{$^\ast$Corresponding author. Email: xilifengningbo@yahoo.com; xilifeng@nbu.edu.cn}
	
%	\thanks{Funding: National Natural Science Foundation of China
%(Grant Nos. 12171107 and 12571096).}

\subjclass[2020]{Primary 28A80; Secondary 51F30}
	
	\keywords{discrete intermediate dimension; discrete Hausdorff dimension; quasi-isometry; coarse packing; Heisenberg group.}

	\begin{abstract}
We introduce upper and lower discrete intermediate dimensions for unbounded
subsets of Euclidean space and extend them to metric spaces. The definitions
restrict the covering radii within a scale-dependent range of dyadic shells
and take a supremum over an auxiliary parameter. The resulting dimensions
are monotone in the parameter and recover the Barlow--Taylor discrete
Hausdorff dimension at zero. We prove continuity at every positive parameter,
with a sharp quantitative bound, and give an example with a jump at zero.
At the upper endpoint, the upper dimension equals upper mass dimension for
uniformly discrete Euclidean sets; the analogous lower mass identity fails.
We establish a mass distribution principle and prove quasi-isometric
invariance under a uniform coarse packing condition. For spaces with uniform
polynomial volume growth, we construct clustered subsets attaining the
continuity bound. Independent thinning truncates these spectra at the
resulting mass dimension, almost surely for all parameters simultaneously.
Applications include the discrete Heisenberg group with its word metric
and the ranges of simple symmetric random walks.
\end{abstract}
	\maketitle
	\section{Introduction and main results}

Hausdorff and box dimensions differ because they impose different restrictions
on covers. Falconer, Fraser and Kempton \cite{FFK} introduced intermediate
dimensions by restricting the relative sizes of the covering sets. Their
parameter $\theta\in[0,1]$ joins Hausdorff dimension to box dimension.
Applications include Lipschitz classification \cite{BK24}, projections
\cite{BFF}, self-affine codings \cite{Fengz24}, and stochastic processes
\cite{DK,Fal22}; see \cite{Fal21} for a survey. Banaji \cite{Bana23}
extended the covering restrictions by allowing more general functions to
relate the smallest and largest admissible scales. Banaji and Fraser
\cite{BF23} studied intermediate dimensions of infinitely generated
attractors, obtaining formulae for self-conformal limit sets under a
separation condition in terms of their Hausdorff dimension and the
intermediate dimensions of the set of fixed points.

For unbounded sets, the geometry at large scales replaces the geometry at
small scales. Barlow and Taylor \cite{BT89,BT} introduced discrete Hausdorff
and packing dimensions to study this geometry, notably for random-walk
ranges. Further applications appear in
\cite{BP17,DS,Daw,Kho94,KKX,LW1,LW,XZ}. Jiang, Miao and Xi \cite{JMX}
developed discrete dimensions and quasi-isometric invariance in metric
spaces.

Here we introduce upper and lower discrete intermediate dimensions.
The construction restricts both the covering radii and the shells being
examined, then takes a supremum over an auxiliary parameter. This last
step makes the dimensions monotone and permits the small parameter
changes needed to transport covers under quasi-isometries. We retain
this regularization throughout. The dimensions recover discrete Hausdorff
dimension at zero, are continuous on $(0,1]$, and may jump at zero.
Our main results also give sharp deterministic spectra and the almost-sure
spectra obtained by independently thinning clustered sets.

\subsection{Definitions}
We use $\N=\{1,2,\ldots\}$. For $n\in\N$, define the $n$-th shell of $\R^d$ by
	\[
	\text{$S_n=B(0,2^n)\setminus B(0,2^{n-1})$ and $S_0=B(0,1)$.}
	\]
	Throughout,  $\R^d$ is equipped with its usual Euclidean metric, and all balls are open.
	For \(0<\delta<1\), let \(N_\delta\in\N\) be the unique integer satisfying
	\begin{equation}\label{eq:Ndelta}
		2^{-N_\delta}\leq \delta<2^{-N_\delta+1}.
	\end{equation}

	For $\eta\in (0,1]$, define
	\[
	J_{\delta,\eta}=\left\{N_\delta, N_\delta+1,\ldots, \left\lceil \frac{N_\delta}{\eta}\right\rceil\right\},
	\]
	where $\lceil x \rceil$ is the least integer not smaller than $x$. For $\eta=0,$ define $J_{\delta,\eta}=\{n\in \N: n\ge N_\delta\}$.

	\begin{definition}\label{def:Phi}
		Let $E\subset\R^d$, $s\geq0$, and $\eta\in[0,1]$.  For $n\in J_{\delta,\eta}$, set
		\[
\begin{split}
\Phi_{n,\delta}^{s,\eta}(E)=\inf\biggl\{
&\sum_{i\in\mathcal I_n}(r_{n,i}/2^n)^s:
E\cap S_n\subseteq\bigcup_{i\in\mathcal I_n}B(x_i,r_{n,i}),\\
&x_i\in S_n,\quad 1\leq r_{n,i}\leq2^n\delta^\eta,
\quad\mathcal I_n\text{ finite}\biggr\}.
\end{split}
\]
		If $E\cap S_n=\emptyset$, set $\Phi_{n,\delta}^{s,\eta}(E)=0$.  Define the fixed-parameter shell content
		\begin{equation}\label{eq:Gamma-delta}
			\Gamma_{\delta}^{s,\eta}(E)
			:=\sum_{n\in J_{\delta,\eta}}\Phi_{n,\delta}^{s,\eta}(E),
		\end{equation}
		and define the regularized content by
		\begin{equation}\label{eq:Phi-delta}
			\Phi_{\delta}^{s,\theta}(E)
			:=\sup_{0\leq\eta\leq\theta}\Gamma_{\delta}^{s,\eta}(E),
			\qquad 0\leq\theta\leq1.
		\end{equation}
		The value $+\infty$ is allowed.
	\end{definition}
	\begin{remark}\label{fad}
		The shell content is finitely subadditive: $\Phi_{n,\delta}^{s,\eta}(E\cup F)\le \Phi_{n,\delta}^{s,\eta}(E)+\Phi_{n,\delta}^{s,\eta}(F)$.
	\end{remark}
	The regularization \eqref{eq:Phi-delta} ensures monotonicity of the resulting dimensions.
	
	Bounded Euclidean shells admit finite covers at every admissible radius.
	We use finite covers throughout; allowing countable covers gives the same
	infimum, since each ball has cost at least $2^{-ns}$ in a fixed shell.  The fixed-parameter series in \eqref{eq:Gamma-delta} may diverge when $\eta=0$, and therefore the regularized content in \eqref{eq:Phi-delta} is allowed to be infinite.
	
	\begin{definition}\label{def:dimensions}
		The upper and lower discrete intermediate dimensions of $E\subset \R^d$ are, respectively, defined by
		\[
		\uDim_{\theta}E=\inf\left\{s\geq0:\limsup_{\delta\searrow0}
		\frac{\log\Phi_{\delta}^{s,\theta}(E)}{-\log\delta}\leq 0\right\},
		\]
		and
		\[
		\lDim_{\theta}E =\inf\left\{s\geq0:\liminf_{\delta\searrow0}
		\frac{\log\Phi_{\delta}^{s,\theta}(E)}{-\log\delta}\leq 0\right\}.
		\]
	\end{definition}
	As usual, we adhere to the convention that $\log0=-\infty$ and $\log(+\infty)=+\infty$. An infimum over an empty set of exponents is $+\infty$.
	
	Throughout the paper, $\#A$ denotes the cardinality of $A$.
	We write $f\lesssim g$ if there exists a constant $C>0$ such that
	$f\le Cg$, and define $f\gtrsim g$ analogously.
	We write $f\asymp g$ if both $f\lesssim g$ and $f\gtrsim g$ hold. The notation $f\lesssim_\alpha g$ means that the implicit constant $C$ in the inequality depends on the parameter $\alpha.$
	\begin{remark}\label{rem:logs}
Bounded sets have dimension zero. The inequality $\leq0$ in
Definition~\ref{def:dimensions} is essential: if $E$ has one point in
each shell, then $\Phi_\delta^{s,\theta}(E)\asymp\delta^s$ for $s>0$,
whereas $\Phi_\delta^{0,\theta}(E)=+\infty$. Requiring equality to zero
would therefore give the wrong critical exponent.
\end{remark}

\subsection{Quasi-isometric invariance}
Let $X, Y$ be two metric spaces. A map $f:X\to Y$ is an $(L,c,C)$-\textit{quasi-isometry} if \(L\geq1\),
	\(c,C\geq0\), and
	\begin{equation}\label{eq:qi-distortion}
		\begin{aligned}
			&L^{-1}d_X(x,x')-c
			\leq d_Y(f(x),f(x'))
			\leq Ld_X(x,x')+c,\qquad(x,x'\in X).
		\end{aligned}
	\end{equation}
	while, for every \(y\in Y\), there exists \(x\in X\) such that
	\begin{equation}\label{eq:qi-density}
		d_Y(y,f(x))\leq C.
	\end{equation}
	This is equivalent, after increasing \(C\) by an arbitrarily small fixed
	amount, to the formulation
	\(\sup_{y\in Y}\dist(y,f(X))<\infty\) used in \cite{JMX}.
	
	Our first result concerns subsets with their induced Euclidean metrics.
\begin{theorem}\label{thm:main}
		Let $E\subset\R^d$ and $F\subset\R^m$ be unbounded sets equipped with the induced Euclidean metrics.  If $E$ and $F$ are quasi-isometric, then, for every $\theta\in[0,1]$,
		\[
		\uDim_{\theta}E=\uDim_{\theta}F
		\qquad\text{and}\qquad
		\lDim_{\theta}E=\lDim_{\theta}F.
		\]
	\end{theorem}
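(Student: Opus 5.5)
By symmetry of quasi-isometry (a quasi-isometry has a quasi-inverse, which is again a quasi-isometry with modified constants), it suffices to prove one inequality. We show $\uDim_\theta F\le\uDim_\theta E$ and $\lDim_\theta F\le\lDim_\theta E$ when $f:E\to F$ is an $(L,c,C)$-quasi-isometry. Fix $s>\uDim_\theta E$ (respectively $s>\lDim_\theta E$), a small $\varepsilon>0$, and $s'=s+\varepsilon$.

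\textbf{Reduction to bounded distortion of scales.} Fix a base point $x_0\in E$. By translating we may assume $0\in E$ and $f(0)=0$, up to an additive constant absorbed into $c$. Then $|f(x)|\asymp_{L,c}|x|$ for $|x|$ large. Hence a shell $S_n\cap E$ is mapped into the union of boundedly many shells $S_{n'}$ of $\R^m$ with $|n'-n|\le k_0=k_0(L,c)$. Conversely, by the density condition \eqref{eq:qi-density}, every point of $F\cap S_{n'}$ lies within $C$ of $f(E\cap\bigcup_{|n-n'|\le k_0}S_n)$. Bounded sets contribute nothing to either dimension, so we may discard finitely many shells.

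\textbf{Transporting covers.} Let $\{B(x_i,r_{n,i})\}$ be a near-optimal cover of $E\cap S_n$ for $\Phi^{s,\eta}_{n,\delta}(E)$. Its image is covered by balls $B(f(x_i),Lr_{n,i}+c+C)$. Since $r_{n,i}\ge1$, we have $Lr_{n,i}+c+C\le K r_{n,i}$ with $K=L+c+C$. These balls must be recentred in the appropriate shell of $\R^m$ and have radius at most $2^{n'}\delta'^{\eta'}$. For this we choose a new scale $\delta'$ comparable to $\delta$ (say $\delta'=\delta\cdot 2^{-j_0}$, or a slightly perturbed exponent) and a parameter $\eta'$ slightly less than $\eta$. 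This absorbs the multiplicative constants $K$ and $2^{k_0}$: we need $K2^{n}\delta^\eta\le 2^{n'}\delta'^{\eta'}$, which holds for $\eta'\le\eta-\varepsilon'$ once $\delta$ is small. Splitting each ball into boundedly many pieces, each centred in a single target shell, costs only a constant factor. The index range $J_{\delta',\eta'}$ must be covered by the shifted range $J_{\delta,\eta}+[-k_0,k_0]$. This is where the sup over $\eta\le\theta$ in \eqref{eq:Phi-delta} is used: the shells near the top end $\lceil N_{\delta'}/\eta'\rceil$ of the new range are handled by taking $\eta$ slightly smaller than the parameter controlling them, which is still $\le\theta$.

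\textbf{Comparing contents.} These steps give an estimate of the form
\[
\Phi^{s,\theta}_{\delta'}(F)\lesssim_{L,c,C}\delta^{-\varepsilon''}\,\Phi^{s,\theta}_{\delta}(E).
\]
The factor $\delta^{-\varepsilon''}$ comes from $(Kr/2^{n'})^s\le K^s2^{k_0 s}(r/2^n)^s$ together with the boundedly many shells and the exponent shifts. The exponent loss may be absorbed by passing from $s$ to $s'$, using $(r/2^n)^{\varepsilon}\le\delta^{\eta\varepsilon}$. Since $\log\delta'/\log\delta\to1$, the limsup (respectively liminf) condition of Definition~\ref{def:dimensions} for $E$ at exponent $s$ transfers to $F$ at exponent $s'$. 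Letting $\varepsilon\to0$ gives the result.

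\textbf{Main obstacle.} The delicate point is the case $\eta=0$ and $\theta$ near $0$. There the absorption of $K$ into $\delta^{\eta-\eta'}$ fails, since one cannot lower $\eta$ below $0$. For $\theta=0$ the content is the Barlow--Taylor quantity, with radii up to $2^n$, so the constants are absorbed directly by allowing radius $2^{n'}$ with $n'\le n+k_0$. For general $\theta$, I would treat the $\eta=0$ term separately in this way and the $\eta>0$ terms by the perturbation above.
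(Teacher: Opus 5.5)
\textbf{There is a genuine gap.} Your outline matches the paper's: a bounded shell shift, transport of covers, a perturbed scale and parameter, and a quasi-inverse for the reverse inequality. But the central step does not close, because the two constraints you impose pull in opposite directions.

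Write $(\delta_{\mathrm{src}},\eta_{\mathrm{src}})$ for the parameters on $E$ and $(\delta_{\mathrm{tgt}},\eta_{\mathrm{tgt}})$ for those on $F$. To transport covers into every target shell, you need source covers on all shells within $k_0$ of $J_{\delta_{\mathrm{tgt}},\eta_{\mathrm{tgt}}}$. That means $N_{\delta_{\mathrm{src}}}\le N_{\delta_{\mathrm{tgt}}}-k_0$ and $\lceil N_{\delta_{\mathrm{src}}}/\eta_{\mathrm{src}}\rceil\ge\lceil N_{\delta_{\mathrm{tgt}}}/\eta_{\mathrm{tgt}}\rceil+k_0$. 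For $\eta_{\mathrm{tgt}}>0$ these force $\eta_{\mathrm{src}}<\eta_{\mathrm{tgt}}$. Your radius absorption demands the opposite, $\eta_{\mathrm{tgt}}\le\eta_{\mathrm{src}}-\varepsilon'$. This causes three failures:
\begin{enumerate}
\item The target range overshoots the source range by at least about $N_\delta\varepsilon'/\eta_{\mathrm{src}}^2$ shells, an unbounded number.
\item Target parameters in $(\theta-\varepsilon',\theta]$ would need source parameters above $\theta$, which $\Phi^{s,\theta}_\delta(E)$ does not control.
\item Your remedy for the top shells (a smaller source parameter there) makes the source cap at least the target cap. The factor $K2^{k_0}$ then returns exactly where it was meant to be absorbed. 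The only splitting you describe separates target shells and never shrinks radii.
\end{enumerate}
The exponent bookkeeping inherits the problem. The factor $\delta^{-\varepsilon''}$ is never derived; the displayed inequality you cite gives only a constant. Since $\eta_{\mathrm{src}}<\eta_{\mathrm{tgt}}$ is forced, the gain $(r/2^n)^\varepsilon\le\delta^{\eta_{\mathrm{src}}\varepsilon}$ tends to $1$ for small positive parameters. So a fixed loss cannot be absorbed uniformly over the supremum, and the obstacle is not confined to $\eta=0$.

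\textbf{The missing idea.} Absorb the constants by splitting oversized pieces, and move the parameter in the forced direction by only $O(1/N_\delta)$. In Lemma~\ref{lem:content-comparison} the paper fixes the target pair $(\delta,\eta)$ and takes the source pair $(2^K\delta,\eta')$ with $\eta'=(N-K)/(\lceil N/\eta\rceil+K)\le\eta$. Then $J_{2^K\delta,\eta'}=\{N-K,\ldots,\lceil N/\eta\rceil+K\}$ contains every source shell needed, and $N(\eta-\eta')\le 2K+1$ keeps $\delta^{\eta'-\eta}$ bounded. The target points whose chosen preimages lie in one source ball of radius $r$ lie in a ball of radius $Dr$ about one of them, and $Dr<D_0u_\ell$ with $D_0$ fixed. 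Coarse packing therefore covers this set by a bounded number of balls of radius $u_\ell$. The result is a content comparison with a constant factor, no loss in $s$, uniform in $\eta\in[0,\theta]$. Lemma~\ref{lem:euclidean-intrinsic} transfers it to the Euclidean definitions.

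Your lossy strategy can also be repaired, but only in the forced direction. Take $\delta_{\mathrm{src}}=2^{k_0}\delta_{\mathrm{tgt}}$ and $\eta_{\mathrm{src}}=(1-\kappa)\eta_{\mathrm{tgt}}$. Cover each oversized ball by $\lesssim\delta_{\mathrm{tgt}}^{-\kappa\eta_{\mathrm{tgt}}m}$ balls at the cap, and choose $0<\kappa<\varepsilon/(m-s)$. The loss is then proportional to $\eta_{\mathrm{tgt}}$ and is absorbed by passing to $s+\varepsilon$.

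\textbf{Two smaller points.} Centres of Euclidean covers need not lie in $E$, so $f(x_i)$ is undefined until you recentre as in Lemma~\ref{lem:euclidean-intrinsic}. Translating so that $f(0)=0$ presupposes base-point invariance, which the paper derives from Theorem~\ref{thm:mainG}. You do not need it, since $|f(x)|\asymp|x|$ for large $|x|$ holds anyway.
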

For the metric-space version, the definitions and the packing hypothesis
are as follows. An \emph{unbounded pointed metric space} $(X,d,o)$ is an
unbounded metric space together with a distinguished point $o\in X$.
No base-point preservation is required of a quasi-isometry.

\begin{definition}\label{def:content}\label{def:metric-dimensions}
Put $S_n^X(o)=B_X(o,2^n)\setminus B_X(o,2^{n-1})$ for $n\geq1$ and
$S_0^X(o)=B_X(o,1)$. Use $N_\delta$ and $J_{\delta,\eta}$ as above.
For $n\in J_{\delta,\eta}$, define
\begin{equation*}
\begin{split}
\Phi_{n,\delta,o}^{s,\eta}(X)=\inf\biggl\{
 &\sum_{i\in I}(r_i/2^n)^s:
 S_n^X(o)\subseteq\bigcup_{i\in I}B_X(x_i,r_i),\\
 &x_i\in S_n^X(o),\quad 1\leq r_i\leq2^n\delta^\eta,
 \quad I\text{ finite}\biggr\}.
\end{split}
\end{equation*}
An empty shell has content zero; an infimum over no finite admissible
covers is $+\infty$. Set
\begin{equation*}
\Gamma_{\delta,o}^{s,\eta}(X)=\sum_{n\in J_{\delta,\eta}}
\Phi_{n,\delta,o}^{s,\eta}(X),\qquad
\Phi_{\delta,o}^{s,\theta}(X)=\sup_{0\leq\eta\leq\theta}
\Gamma_{\delta,o}^{s,\eta}(X).
\end{equation*}
Define $\uDim_{\theta,o}X$ and $\lDim_{\theta,o}X$ by
Definition~\ref{def:dimensions}, with this content and the conventions
stated there. For $G\subset S_n^X(o)$, write
$\Phi_{n,\delta,o}^{s,\eta}(G)$ when only $G$ is covered;
the centers still lie in $S_n^X(o)$.
\end{definition}

\begin{definition}[Uniform coarse packing]\label{def:UCP}
Let $\Pck_r(A)$ be the supremum of the cardinalities of $r$-separated
subsets of $A$. A metric space $X$ satisfies the \emph{uniform coarse
packing condition} if, for every $\Lambda\geq1$,
\begin{equation}\label{eq:UCP}
M_X(\Lambda):=\sup_{x\in X,\ r\geq1}
\Pck_r(B_X(x,\Lambda r))<\infty.
\end{equation}
\end{definition}
Every doubling space, and every Euclidean metric subspace, satisfies
this condition. It guarantees finite admissible shell covers. The
metric-space definitions agree with the Euclidean definitions by
Lemma~\ref{lem:euclidean-intrinsic}.

\begin{theorem}\label{thm:mainG}
		Let $(X,d_X,o_X)$ and $(Y,d_Y,o_Y)$ be unbounded pointed metric spaces, both satisfying the uniform coarse packing condition. If $X$ and $Y$ are quasi-isometric, then, for every $\theta\in[0,1]$,
		\begin{equation}\label{eq:main-conclusion}
			\uDim_{\theta,o_X}X
			=
			\uDim_{\theta,o_Y}Y,
			\qquad
			\lDim_{\theta,o_X}X
			=
			\lDim_{\theta,o_Y}Y.
		\end{equation}
	\end{theorem}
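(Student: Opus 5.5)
By symmetry it suffices to show $\uDim_{\theta,o_Y}Y\le \uDim_{\theta,o_X}X$ and the analogous inequality for the lower dimensions. A quasi-isometry $f:X\to Y$ has a quasi-inverse $g:Y\to X$, which is again a quasi-isometry, so the reverse inequalities follow by exchanging the roles of $X$ and $Y$. First we dispose of base points. Changing $o_X$ to any other point shifts each shell index by a bounded amount and deforms each shell into a union of boundedly many neighbouring shells. So we may assume $d_Y(f(o_X),o_Y)\le C'$, and then we show that $\Phi_{\delta,o}^{s,\theta}$ changes only by factors $\lesssim 1$ and by a small perturbation of $\eta$. Because the definition takes a supremum over $\eta\le\theta$, such a perturbation is absorbed, and the dimensions are unaffected. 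Here the uniform coarse packing condition enters: a ball of radius $r$ in a neighbouring shell is covered by at most $M_X(\Lambda)$ balls of radius $r/\Lambda$, which gives comparable costs.

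The core step is transporting covers. Fix $s>\uDim_{\theta,o_X}X$ and $\eta\le\theta$, and take a near-optimal cover of the shell $S_n^X(o_X)$ by balls $B_X(x_i,r_i)$ with $1\le r_i\le 2^n\delta^\eta$. By the density condition \eqref{eq:qi-density}, the shell $S_m^Y(o_Y)$ lies in the $C$-neighbourhood of $f(\bigcup_{|n-m|\le k}S_n^X(o_X))$, where $k$ depends only on $L,c,C$. Each ball $B_X(x_i,r_i)$ is mapped into $B_Y(f(x_i),Lr_i+c+C)$, and since $r_i\ge1$ this radius is at most $K r_i$. The centres $f(x_i)$ may fall outside $S_m^Y$. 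Whenever the enlarged ball meets $S_m^Y$, we recenter it at a point of the intersection and double the radius. Using coarse packing, we also split any ball of radius exceeding $2^m\delta'^{\eta'}$ into boundedly many admissible balls. Since $2^m\asymp 2^n$, this yields
\[
\Phi_{m,\delta',o_Y}^{s,\eta'}(Y)\lesssim\sum_{|n-m|\le k}\Phi_{n,\delta,o_X}^{s,\eta}(X),
\]
with the parameters related by $\delta'=\delta$ and $\eta'$ slightly smaller than $\eta$. Alternatively, we can keep $\eta'=\eta$ and change $\delta$ by a constant factor. The requirement is that the upper radius bound $K2^n\delta^\eta\le 2^m\delta'^{\eta'}$ holds.

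Next we match the index ranges. The ranges $J_{\delta,\eta}$ in $X$ and $Y$ differ only by $O(1)$ shells at each end, because $N_\delta$ and $\lceil N_\delta/\eta\rceil$ are shifted by at most $k$. Summing over $m$, and replacing $\delta$ by $c_0\delta$ if necessary, gives
\[
\Gamma_{\delta,o_Y}^{s,\eta'}(Y)\lesssim \Gamma_{c_0\delta,o_X}^{s,\eta}(X)+(\text{boundary shells}).
\]
The finitely many boundary shells contribute at most $O(1)$ times a single shell content of comparable scale, and this is dominated by the neighbouring terms. Taking the supremum over $\eta\le\theta$ and then $\log/(-\log\delta)$, the constants and the replacement $\delta\mapsto c_0\delta$ disappear in the limit. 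This works for both $\limsup$ and $\liminf$, since $c_0$ is fixed.

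The main obstacle is the parameter shift. The radius cap $2^n\delta^\eta$ in $X$ becomes $K2^n\delta^\eta$ in $Y$, and $K$ cannot be absorbed into $\delta^{\eta}$ without changing either $\delta$ or $\eta$. Changing $\delta$ shifts $N_\delta$ and hence the shell ranges. I would first try the $\delta\mapsto c_0\delta$ substitution, $c_0=K^{-1/\eta}$, which is fine for $\eta$ bounded away from $0$ but degenerates as $\eta\to0$. For small $\eta$ I would instead use the split-into-boundedly-many-balls trick from coarse packing. This keeps both $\delta$ and $\eta$ fixed at the cost of a multiplicative constant $M_Y(K)$ and a factor $K^{-s}$ in the cost. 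That route should handle the case $\eta=0$, where the cap is $2^n$ itself, uniformly. If uniformity fails, one perturbs $\eta$ to $\eta(1+\epsilon)$ and uses the supremum regularization together with the continuity of the dimensions on $(0,1]$ stated among the main results.
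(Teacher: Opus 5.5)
There is a genuine gap, and it lies in the step you treat as routine. The radius cap is not the real obstacle. A transported ball has radius at most a bounded multiple of the target cap, and uniform coarse packing splits it into boundedly many admissible balls at bounded cost. This works for every $\eta$, with no change of $\delta$ or $\eta$; it is Case~2 in the proof of Lemma~\ref{lem:content-comparison}.

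The real obstacle is the index window. A $Y$-shell $\ell$ is controlled by the $X$-shells $n$ with $|n-\ell|\le K$, so the $X$-side window must contain $\{N_\delta-K,\dots,\lceil N_\delta/\eta\rceil+K\}$. With $\eta$ fixed, no rescaling $\delta\mapsto c_0\delta$ achieves this. Forcing $N_{c_0\delta}\le N_\delta-K$ pulls the upper end down to at most $\lceil N_\delta/\eta\rceil-K$.

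Your claim that the leftover boundary shells are ``dominated by the neighbouring terms'' has no justification:
\begin{enumerate}
\item contents of adjacent shells are unrelated, since one may be empty while the next carries a cluster;
\item a single shell can carry the whole lower bound, as in the proof of Theorem~\ref{thm:Q-clusters};
\item at $\eta=1$ the window $J_{\delta,1}=\{N_\delta\}$ consists of boundary only.
\end{enumerate}
You could patch the boundary with contents at the scales $2^{j}\delta$, $|j|\le K$. That gives a sum over several scales, which suffices for the $\limsup$ but does not control the $\liminf$.

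Your parameter shift also runs the wrong way. Making the $Y$-parameter smaller than the $X$-parameter lengthens the $Y$ window relative to the $X$ window. When the $Y$-parameter approaches $\theta$, it requires an $X$-parameter above $\theta$, outside the supremum defining $\Phi^{s,\theta}_{\delta,o_X}(X)$. The continuity fallback is unavailable. Proposition~\ref{thm:continuity} is proved for Euclidean sets and Theorem~\ref{thm:Q-growth} for polynomial-growth spaces, not for general spaces with uniform coarse packing. At $\theta=1$ there is no larger parameter at all.

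The missing idea is to shift the other way, uniformly in $\eta$. With $N=N_\delta$, take $\delta'=2^K\delta$ and, for $\eta>0$, $\eta'=(N-K)/(\lceil N/\eta\rceil+K)\le\eta$. Then $J_{\delta',\eta'}=\{N-K,\dots,\lceil N/\eta\rceil+K\}$ contains the whole $K$-neighbourhood of $J_{\delta,\eta}$. The elementary estimate $N(\eta-\eta')\le 2K+1$ shows that the $X$ cap exceeds the $Y$ cap by at most a bounded factor ($2^{4K+1}$), so coarse packing completes the transport. Because $\eta'\le\eta\le\theta$, you obtain the single-scale bound $\Phi^{s,\theta}_{\delta,o_Y}(Y)\le A\,\Phi^{s,\theta}_{2^K\delta,o_X}(X)$, which serves both the $\liminf$ and the $\limsup$. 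This is exactly where the supremum over the auxiliary parameter is used.
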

	\begin{remark}[Choice of base point]\label{cor:basepoint}
Under the hypotheses on $X$, the dimensions are independent of $o$.
Indeed, apply Theorem~\ref{thm:mainG} to the surjective isometry
$\operatorname{id}_X:(X,d,o)\to(X,d,o')$. We may therefore omit the
base-point subscript. For Euclidean sets, the origin of the shells may
also be replaced by any $a\in\R^d$; see
Lemma~\ref{lem:euclidean-intrinsic}. Apply its same-radius recentering
argument to $E\cup\{a\}$ with base point $a$, then apply
Theorem~\ref{thm:mainG} to the inclusions obtained by adjoining $0$ and $a$.
\end{remark}
The main step in the proof is a content comparison uniform in the
auxiliary parameter. A quasi-isometry shifts shell indices by a bounded
amount; we adjust the scale and parameter together so that the transported
covers remain admissible (Lemma~\ref{lem:content-comparison}).

\subsection{Endpoints and continuity}\label{sec:endpoints-intro}
Write $\hdim E$ for Barlow--Taylor discrete Hausdorff dimension, recalled
in Section~\ref{sec:hausdorff}. For an unbounded locally finite set, put
\[
\dim_{\mathrm{LM}}E=\liminf_{n\to\infty}\frac{\log A_n(E)}{n\log2},
\qquad
\dim_{\mathrm{UM}}E=\limsup_{n\to\infty}\frac{\log A_n(E)}{n\log2},
\]
where $A_n(E)=\#(E\cap B(0,2^n))$. Locally finite means finite on every
bounded set. Uniformly discrete means that distinct points have distances
bounded below by one positive constant.
\begin{proposition}\label{gjH}
		Let $E\subset \R^d$. Then
		\[
		\u0dim E = \l0dim E = \hdim E.
		\]
	\end{proposition}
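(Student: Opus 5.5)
The plan is to show that at $\theta=0$ the regularized content is, up to constants, a tail of the Barlow--Taylor series. Both critical exponents are then read off from whether that series converges or diverges.

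\emph{Step 1: identify the content at $\theta=0$.} For $\theta=0$ the supremum in \eqref{eq:Phi-delta} is only over $\eta=0$, so $\Phi_\delta^{s,0}(E)=\Gamma_\delta^{s,0}(E)=\sum_{n\ge N_\delta}\Phi_{n,\delta}^{s,0}(E)$. Moreover the radius constraint becomes $1\le r_{n,i}\le 2^n$, which does not depend on $\delta$. Write $\Phi_n^s(E)$ for this common shell content. Recall from Section~\ref{sec:hausdorff} that $\hdim E=\inf\{s\ge0:\sum_n\nu_s(E\cap S_n)<\infty\}$. Here $\nu_s(E\cap S_n)$ is the infimum of $\sum_i(r_i/2^n)^s$ over covers of $E\cap S_n$ by balls (or cubes) with $r_i\ge1$, with arbitrary centres and no upper bound on the radii.

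\emph{Step 2: compare the shell contents.} I will show $\Phi_n^s(E)\asymp_{d,s}\nu_s(E\cap S_n)$ uniformly in $n$. The inequality $\nu_s\le\Phi_n^s$ is immediate, since admissible covers for $\Phi_n^s$ are special covers. The reverse inequality needs three modifications of an arbitrary cover, each costing at most a multiplicative constant.
\begin{itemize}
\item Discard the balls that miss $E\cap S_n$.
\item Recentre each remaining ball $B(x,r)$ at a point $y\in S_n\cap B(x,r)$, replacing it by $B(y,2r)$.
\item If a ball has radius larger than $2^n$, replace all such balls by one fixed cover of $S_n$. Since $S_n\subset B(0,2^n)$, it is covered by a number $C_d$ of balls of radius $2^n$ centred in $S_n$, depending only on $d$. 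The cost of this cover is at most $C_d$, while the replaced balls already cost at least $1$.
\end{itemize}
If cubes are used in the Barlow--Taylor definition, one more constant converts cubes to balls. Summing over $n\ge N_\delta$ gives
\[
c\,T_\delta\le\Phi_\delta^{s,0}(E)\le C\,T_\delta,\qquad T_\delta:=\sum_{n\ge N_\delta}\nu_s(E\cap S_n).
\]

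\emph{Step 3: read off the exponents.} Suppose $s>\hdim E$. Then the series converges, and since $N_\delta\to\infty$ as $\delta\searrow0$, the tail $T_\delta$ tends to $0$. Hence $\Phi_\delta^{s,0}(E)\le1$ for small $\delta$, so $\log\Phi_\delta^{s,0}(E)\le0$. Both the limsup and the liminf in Definition~\ref{def:dimensions} are then $\le0$, which gives $\l0dim E\le\u0dim E\le s$. Suppose instead $s<\hdim E$. Then the series diverges, so every tail is $+\infty$ and $\Phi_\delta^{s,0}(E)=+\infty$ for all $\delta$. The ratio is $+\infty$ for all $\delta$, so $s$ is admissible for neither dimension, and therefore $\l0dim E\ge s$. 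Letting $s$ tend to $\hdim E$ from both sides gives the equalities; the case $\hdim E=+\infty$ or $0$ is covered by the same argument. The convention $\log(+\infty)=+\infty$ matters here, and as Remark~\ref{rem:logs} points out, the criterion "$\le0$" rather than "$=0$" is what makes the case $s>\hdim E$ work.

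The main obstacle is Step 2, that is, reconciling the precise normalisation in Section~\ref{sec:hausdorff} with Definition~\ref{def:Phi}. The differences are cubes versus balls, the constraint that centres lie in $S_n$, and the upper bound $r\le2^n$. The comparison has to hold with constants independent of $n$, so that the convergence or divergence of the tails is unaffected.
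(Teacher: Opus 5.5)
Your proposal is correct and follows the paper's proof: Step~2 is Lemma~\ref{kbj} (the paper's $\Psi^s_{n,\infty}$ already has centres in $S_n$, so only the radius cap needs handling, by the same replace-by-a-fixed-cover-of-the-shell argument), and Step~3 reads the exponents off convergent versus divergent tails in the same way. The one point to tighten is that ``$s>\hdim E$ implies the series converges'' uses the upper-set property of the convergence exponents. The paper notes this property for the capped contents, since $r_i/2^n\le1$, and your Step~2 transfers it to $\nu_s$; alternatively, run your argument only at exponents in the convergence set, which come arbitrarily close to $\hdim E$.
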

\begin{proposition}\label{dBD}
		Let $E\subset\R^d$ be an unbounded locally finite set.  Then
		\begin{equation}\label{eq:upper-mass-general-bound}
			\uodim E\leq\dim_{\mathrm{UM}}E.
		\end{equation}
		Moreover, if $E$ is uniformly discrete, then
		\begin{equation}\label{eq:upper-mass-endpoint}
			\uodim E=\dim_{\mathrm{UM}}E.
		\end{equation}
	\end{proposition}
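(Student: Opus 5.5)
Throughout, fix $\theta=1$. The content is $\Phi_\delta^{s,1}(E)=\sup_{0\le\eta\le1}\Gamma_\delta^{s,\eta}(E)$, and the shells examined for $\eta=1$ are only those with $n=N_\delta$. For $\eta<1$ they are $n\in[N_\delta,\lceil N_\delta/\eta\rceil]$, with radii $1\le r\le 2^n\delta^\eta$.

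\textbf{Upper bound.} Let $s>\dim_{\mathrm{UM}}E$ and choose $s'$ with $\dim_{\mathrm{UM}}E<s'<s$. Then $\#(E\cap S_n)\le A_n(E)\lesssim 2^{ns'}$ for all $n$. For each $n$, cover $E\cap S_n$ by radius-$1$ balls centred at its points. The point itself lies in $S_n$, and the radius $1\le 2^n\delta^\eta$ is admissible because $n\ge N_\delta$ gives $2^n\delta^\eta\ge 2^{N_\delta}\delta\ge1$. This gives
\[
\Phi_{n,\delta}^{s,\eta}(E)\le 2^{ns'}2^{-ns}=2^{-n(s-s')}.
\]
Summing over $n\ge N_\delta$, uniformly in $\eta$ (this also covers $\eta=0$), we get $\Phi_\delta^{s,1}(E)\lesssim 2^{-N_\delta(s-s')}\asymp\delta^{s-s'}$. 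Hence the $\limsup$ in Definition~\ref{def:dimensions} is negative and $\uodim E\le s$, which proves \eqref{eq:upper-mass-general-bound}. Since the content is monotone in $\theta$, it is enough to work at $\theta=1$.

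\textbf{Lower bound, uniformly discrete case.} Let $E$ have separation $\ge c>0$, and let $t<s<\dim_{\mathrm{UM}}E$ with $t<s$ close. We must show $\Phi_\delta^{s,1}(E)\ge 1$ (indeed $\to\infty$) along a sequence $\delta\to0$. This contradicts the criterion for every exponent below $\dim_{\mathrm{UM}}E$, although strictly we need $\limsup\log\Phi/(-\log\delta)>0$, and that also follows from the computation below.

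\emph{Choosing a shell.} There are infinitely many $m$ with $A_m(E)\ge 2^{mt}$. Since $A_m=\sum_{n\le m}\#(E\cap S_n)$, pigeonholing over the shells gives, for infinitely many $n$, $\#(E\cap S_n)\ge 2^{nt}/n^2$ or so. Take such an $n$ and set $\delta=2^{-n}$, so that $N_\delta=n$ and $\eta=1$ examines exactly shell $n$ with radii $1\le r\le 1$, i.e.\ $r=1$.

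\emph{Counting.} By uniform discreteness, a ball of radius $1$ contains at most $K=K(c,d)$ points of $E$. Any admissible cover of $E\cap S_n$ therefore uses at least $\#(E\cap S_n)/K$ balls, so
\[
\Gamma_\delta^{s,1}(E)\ge \frac{2^{nt-ns}}{Kn^2}.
\]
This bound is too weak, because it uses $s$ on top of $t$. The fix is to use $\eta<1$ with $\delta$ chosen differently: take $\delta=2^{-N}$ with $N=\lceil\eta n\rceil$, so that shell $n\in J_{\delta,\eta}$ with radii up to $2^{n-N\eta}\approx 1$. The cost is then $\gtrsim\#(E\cap S_n)\,2^{-ns}\approx 2^{n(t-s)}$ against the normalization $-\log\delta\approx\eta n\log 2$.

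This is where I expect the main obstacle. With normalized radii $(r/2^n)^s$, counting points alone gives a cost like $2^{n(t-s)}$, which decays. So the right comparison must be made with the criterion $\limsup\log\Phi/(-\log\delta)\le0$ and exponents $s<t$ (not $s>t$). For $s<t$ the bound $2^{n(t-s)}/(Kn^2)$ grows exponentially in $n\asymp-\log\delta$, so the $\limsup$ is positive and $\uodim E\ge s$. Letting $s\uparrow t\uparrow\dim_{\mathrm{UM}}E$ gives \eqref{eq:upper-mass-endpoint}.

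The delicate points to check are these. First, the shell pigeonholing needs care when the mass is spread over many shells; it is handled by noting that $\sum_{n\le m}2^{nt}$-type estimates force one shell with $\#(E\cap S_n)\ge 2^{nt}/m^2$ and $n\ge m-O(\log m)$. Second, admissible balls of radius $1$ have centres in $S_n$ but need not lie at points of $E$; this is harmless, since the $K$-bound holds for any ball.
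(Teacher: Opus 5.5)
Your proof is correct and follows the paper's route. Unit-ball covers give the upper bound uniformly in $\eta$. For the lower bound you take $\delta=2^{-n}$, so that $\eta=1$ sees only shell $n$ with radius $1$, and combine the bounded number of points per unit ball with a subsequence of shells on which $\#(E\cap S_n)$ grows at the upper mass rate, exactly as in Lemmas~\ref{lem:local-equivalence} and~\ref{lem:shell-upper-mass}.

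Once you fix the ordering $s<t<\dim_{\mathrm{UM}}E$, the $\eta<1$ detour is unnecessary. The shell selection also needs no claim of the form $n\geq m-O(\log m)$, which can fail for a given $m$. It is enough to argue by contradiction: if $\#(E\cap S_n)<2^{nt}$ for all large $n$, then $A_m(E)\lesssim 2^{mt}$, so infinitely many shells satisfy $\#(E\cap S_n)\geq 2^{nt}$.
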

The lower endpoint at $\theta=1$ need not equal lower mass dimension,
even for subsets of $\Z$ (Example~\ref{ex:lower-endpoint-mass-failure}).
Both dimension functions are nondecreasing and take values in $[0,d]$.
\begin{proposition}
		\label{thm:continuity}
		Let $E\subset\R^d$ and $0<\theta<\phi\le1$.  Then
		\begin{equation}
			0\le\lDim_\phi E-\lDim_\theta E
			\le\left(1-\frac{\theta}{\phi}\right)
			\left(d-\lDim_\theta E\right),
			\label{eq:lower-continuity-modulus}
		\end{equation}
		and
		\begin{equation*}
			0\le\uDim_\phi E-\uDim_\theta E
			\le\left(1-\frac{\theta}{\phi}\right)
			\left(d-\uDim_\theta E\right).
		\end{equation*}
		In particular, both spectra are continuous on $(0,1]$.
	\end{proposition}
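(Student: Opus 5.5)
The plan is to compare contents directly at the same scale $\delta$, by lowering the auxiliary parameter and subdividing large balls. Monotonicity is immediate. If $\theta<\phi$, the supremum in \eqref{eq:Phi-delta} defining $\Phi_\delta^{s,\phi}$ runs over a larger set of $\eta$ than the one defining $\Phi_\delta^{s,\theta}$. Hence $\Phi_\delta^{s,\theta}(E)\le\Phi_\delta^{s,\phi}(E)$ for every $\delta$ and $s$, so both $\uDim$ and $\lDim$ are nondecreasing in the parameter. The content of the proposition is therefore the upper bound.

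Fix $s\ge0$ and put $t=s+(1-\theta/\phi)(d-s)$, equivalently $d-t=(\theta/\phi)(d-s)$. We may assume $s\le d$, since both dimensions lie in $[0,d]$. The key claim is
\[
\Phi_\delta^{t,\phi}(E)\lesssim_d\Phi_\delta^{s,\theta}(E)
\]
for all $0<\delta<1$, with a constant independent of $\delta$. To prove it, take $\eta\in[0,\phi]$ and set $\eta'=\eta\theta/\phi\in[0,\theta]$. Since $\eta'\le\eta$, we have $\lceil N_\delta/\eta\rceil\le\lceil N_\delta/\eta'\rceil$, so $J_{\delta,\eta}\subseteq J_{\delta,\eta'}$; the case $\eta=0$ gives $\eta'=0$ and is trivial. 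It therefore suffices to show $\Phi_{n,\delta}^{t,\eta}(E)\lesssim\Phi_{n,\delta}^{s,\eta'}(E)$ for each $n\in J_{\delta,\eta}$. Summing over $n$ then gives $\Gamma_\delta^{t,\eta}\lesssim\Gamma_\delta^{s,\eta'}\le\Phi_\delta^{s,\theta}$, and taking the supremum over $\eta\le\phi$ gives the claim.

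For the shell estimate, take an admissible $\eta'$-cover of $E\cap S_n$ by balls $B(x_i,r_i)$ with $1\le r_i\le 2^n\delta^{\eta'}$. There are two kinds of balls.
\begin{itemize}
\item If $r_i\le\rho:=2^n\delta^\eta$, the ball is already admissible for $\eta$. Its cost satisfies $(r_i/2^n)^t\le(r_i/2^n)^s$ because $r_i\le 2^n$ and $t\ge s$.
\item If $r_i>\rho$, cover $B(x_i,r_i)$ by $\lesssim_d(r_i/\rho)^d$ balls of radius $\rho/2$. Discard those missing $E\cap S_n$, and replace each remaining one by a ball of radius $\rho$ centred at a point of $E\cap S_n$ inside it, so that the centres lie in $S_n$. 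These radii are admissible: $\rho\le2^n\delta^\eta$ by definition, and $\rho\ge 2^{N_\delta}2^{-N_\delta\eta}\ge1$ because $n\ge N_\delta$ and $\delta\ge 2^{-N_\delta}$.
\end{itemize}
The cost of the pieces replacing a large ball is at most
\[
C_d\Bigl(\frac{r_i}{\rho}\Bigr)^d\delta^{\eta t}
=C_d\Bigl(\frac{r_i}{2^n}\Bigr)^s\Bigl(\frac{r_i}{2^n}\Bigr)^{d-s}\delta^{-\eta(d-t)}
\le C_d\Bigl(\frac{r_i}{2^n}\Bigr)^s\delta^{\eta'(d-s)-\eta(d-t)} .
\]
The last exponent is $0$ by the choice of $\eta'$ and $t$. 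Summing over both kinds of balls and taking the infimum over covers gives the shell estimate.

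Now let $s>\lDim_\theta E$, resp.\ $s>\uDim_\theta E$. Along a sequence $\delta\to0$, resp.\ for all small $\delta$, we have $\log\Phi_\delta^{s,\theta}/(-\log\delta)\le o(1)$ in the sense of Definition~\ref{def:dimensions}. By the claim the same holds for $\Phi_\delta^{t,\phi}$, since the multiplicative constant disappears after dividing by $-\log\delta$. Hence $\lDim_\phi E\le t$, resp.\ $\uDim_\phi E\le t$. Letting $s$ decrease to the dimension gives both inequalities, because $t$ depends continuously on $s$. Continuity on $(0,1]$ follows: the bound tends to $0$ as $\theta/\phi\to1$, which gives left and right continuity together with monotonicity.

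The step needing most care is the subdivision. It must produce admissible balls, which means centres in $S_n$ and radii at least $1$ and at most $2^n\delta^\eta$. It must also do this with a constant uniform in $n$, $\delta$ and $\eta$, and the exact exponent balance $\eta'(d-s)=\eta(d-t)$ is what makes the bound sharp.
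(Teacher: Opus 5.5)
Your proof is correct and follows the paper's argument. Like the paper, you subdivide oversized balls into balls of the maximal admissible radius, use the exact balance $\theta(d-s)=\phi(d-t)$ as in Lemma~\ref{lem:parameter-comparison}, and pass from $\Phi_\delta^{t,\phi}(E)\lesssim\Phi_\delta^{s,\theta}(E)$ to critical exponents in the same way. The only variation is how you handle the supremum over the auxiliary parameter: you pair each $\eta\in[0,\phi]$ with $\eta\theta/\phi$, so a single exponent $t$ balances exactly. The paper instead pairs $\eta\le\theta$ with itself, and pairs $\eta\in(\theta,\phi]$ with $\theta$ using the smaller exponent $T_{\theta,\eta}(s)\le t$ together with exponent monotonicity.
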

Continuity at zero fails in general (Example~\ref{ex:discontinuity-at-zero}).

\subsection{Polynomial growth and sharp spectra}
Let $(X,d,o)$ be unbounded and suppose, for some $Q>0$ and
$v_-,v_+>0$, that
\begin{equation}\label{eq:Q-growth}
v_-R^Q\leq\#B_X(x,R)\leq v_+R^Q
\qquad(x\in X,\ R\geq1).
\end{equation}
These bounds imply uniform coarse packing for $X$ and its subspaces.
Mass dimensions are defined as above using $\#(E\cap B_X(o,2^n))$.
Write $S_n=S_n^X(o)$ in the following statements.
\begin{theorem}\label{thm:Q-growth}
Assume \eqref{eq:Q-growth}. Then, for every $\theta\in[0,1]$,
\begin{equation}\label{eq:Q-full-spectrum}
 \lDim_\theta X=\uDim_\theta X=Q.
\end{equation}
Every unbounded subspace $E\subset X$ satisfies
$0\leq\lDim_\theta E\leq\uDim_\theta E\leq Q$. For
$0<\theta<\phi\leq1$,
\begin{equation}\label{eq:Q-modulus}
 0\leq D_\phi E-D_\theta E
 \leq\left(1-\frac{\theta}{\phi}\right)(Q-D_\theta E),
\end{equation}
where $D$ denotes either the lower or the upper dimension.
\end{theorem}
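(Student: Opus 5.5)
The plan has three parts, in order: the upper bound $\uDim_\theta E\le Q$ for every unbounded subspace, the matching lower bound $\lDim_\theta X\ge Q$, and then the modulus \eqref{eq:Q-modulus}, obtained by rerunning the argument of Proposition~\ref{thm:continuity} with $d$ replaced by $Q$.

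\textbf{Upper bound.} By monotonicity in $\theta$, it suffices to treat $\theta=1$ and $E=X$. Indeed, covers of $S_n\cap E$ are obtained from covers of $S_n$ by discarding balls, so $\Phi^{s,\theta}_{\delta}(E)\le\Phi^{s,\theta}_{\delta}(X)$. Fix $\eta\in[0,1]$ and $n\in J_{\delta,\eta}$. Put $r=\max(1,2^n\delta^\eta)$ and take a maximal $r$-separated subset of $S_n$. By \eqref{eq:Q-growth} (a packing argument: disjoint balls $B(x_i,r/2)$ inside $B(o,2^{n+1})$), it has at most $\lesssim (2^n/r)^Q$ points. The corresponding $r$-balls, with centers in $S_n$, cover $S_n$. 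Hence
\[
\Phi^{s,\eta}_{n,\delta}\lesssim (r/2^n)^{s-Q}\le \delta^{-\eta(s-Q)}\cdot C
\]
when $s\ge Q$, uniformly in $n$. When $r=1$ the bound becomes $2^{-n(s-Q)}$, which is also $\lesssim 1$. Summing over at most $N_\delta/\eta+1\lesssim \log(1/\delta)$ shells costs only a logarithmic factor, except in the case $\eta=0$. There $J$ is infinite, but $r=2^n$ gives one ball per shell of cost $\lesssim 1$, which would diverge. So for $\eta$ small I instead use radius $2^n\delta^\eta$ for $s>Q$. Each shell then costs $\lesssim \delta^{\eta(s-Q)}\cdot\delta^{-\eta s}\cdot\ldots$; this needs care. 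The cleaner route is to note that for $s>Q$ and radius $1$, the shell cost is $\lesssim 2^{-n(s-Q)}$, which is summable over all $n\ge N_\delta$ with total $\lesssim \delta^{s-Q}\le 1$. This is uniform in $\eta$, since radius $1$ is always admissible. Therefore $\Phi^{s,1}_\delta(X)\lesssim 1$ for $s>Q$, giving $\uDim_1 X\le Q$.

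\textbf{Lower bound.} It suffices to show $\lDim_0X\ge Q$, using the sup over $\eta\in[0,\theta]$ with $\eta=0$ and monotonicity. Take any admissible cover of $S_n$ by balls $B(x_i,r_i)$ with $1\le r_i\le 2^n$. Counting points gives
\[
\#S_n\le\sum_i v_+ r_i^Q .
\]
By \eqref{eq:Q-growth}, $\#S_n\ge v_-2^{nQ}-v_+2^{(n-1)Q}$, which may be nonpositive when the constants are bad. The fix is to group $k$ consecutive shells, with $k$ fixed so that $v_-2^{kQ}>2v_+$; at least one shell among every $k$ consecutive ones has $\#S_n\gtrsim 2^{nQ}$. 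For such $n$ and $s<Q$,
\[
\sum_i (r_i/2^n)^s\ge\sum_i (r_i/2^n)^Q\gtrsim 1 .
\]
Summing over infinitely many such $n\ge N_\delta$ gives $\Gamma^{s,0}_\delta=\infty$, so $\lDim_0X\ge Q$. In fact $\eta=\theta$ alone already gives content $\gtrsim \log(1/\delta)$ for $s<Q$, but that is unnecessary. The same counting with $s=Q-\epsilon$ is immediate. Together with the upper bound, this proves \eqref{eq:Q-full-spectrum}, and the inequality $0\le\lDim_\theta E\le\uDim_\theta E\le Q$ follows from monotonicity and the upper bound.

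\textbf{Modulus, the main obstacle.} For \eqref{eq:Q-modulus} I would follow the proof of Proposition~\ref{thm:continuity}. Given $s>D_\theta E$ and good covers at parameter $\eta\le\theta$ and scale $\delta'$, I would take $\delta'$ with $\delta'^{\eta}$ matched to $\delta^{\eta\phi/\theta}$, so that the shell range $J_{\delta,\eta\phi/\theta}$ is handled by the $\theta$-covers at a rescaled $\delta$. Each ball too large for the new constraint $r\le 2^n\delta^{\eta'}$ is refined into $\lesssim (r/2^n\delta^{\eta'})^Q$ smaller balls, using the packing bound from \eqref{eq:Q-growth} in place of the Euclidean $d$-dimensional packing. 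This produces exponent $t=s+(1-\theta/\phi)(Q-s)$. The delicate points, as in the Euclidean proof, are that refined centers must stay in $S_n$ (handled by selecting from a separated net of $B(x_i,r)\cap S_n$) and the bookkeeping of $N_\delta$ and the shell-index ranges under the rescaling. I expect these to go through verbatim, because the Euclidean proof uses $d$ only through the count of subballs.
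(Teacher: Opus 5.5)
Your upper and lower bounds match the paper. Unit balls give $\Phi^{s,\theta}_{\delta,o}(E)\lesssim\delta^{s-Q}$ uniformly in the auxiliary parameter. For $s<Q$ the $\eta=0$ term diverges on the rich shells, which you reprove by grouping consecutive shells as in Lemma~\ref{lem:Q-rich-shells}. One small correction: the content of a subspace is intrinsic, so admissible covers must be centered in $E\cap S_n$. Hence $\Phi^{s,\theta}_{\delta}(E)\le\Phi^{s,\theta}_{\delta}(X)$ does not follow by discarding balls. Instead, place the unit balls at the points of $E\cap S_n$ directly, as the paper does.

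The modulus is where your plan breaks. Your exponent $t=Q-(\theta/\phi)(Q-s)$ and the refinement count via packing are right, but the Euclidean proof involves no rescaling of $\delta$, and the rescaling you propose fails. Matching $(\delta')^\eta=\delta^{\eta\phi/\theta}$ means $\delta'=\delta^{\phi/\theta}$, so $N_{\delta'}\approx(\phi/\theta)N_\delta$. The shells $N_\delta\le n<N_{\delta'}$ of $J_{\delta,\eta\phi/\theta}$ then receive no cover from $J_{\delta',\eta}$. On the remaining shells the two radius caps coincide, so nothing is refined and no change of exponent can occur. If the scheme were valid it would prove $D_\phi E\le D_\theta E$. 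That contradicts the strictly increasing spectra of Theorem~\ref{thm:Q-clusters}, whose content sits precisely in the omitted shells near $N_\delta$.

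The correct comparison is at a single $\delta$. Since $\phi>\theta$, you have $J_{\delta,\phi}\subset J_{\delta,\theta}$, so each target shell already carries a $(\delta,\theta)$-cover with $r\le2^n\delta^\theta$. Keep the balls with $r\le u_n:=2^n\delta^\phi$. Replace each larger ball by at most $A(r/u_n)^Q$ balls of radius $u_n$ centered in $E\cap S_n$, using Lemma~\ref{lem:Q-packing}. Then
\[
 (r/u_n)^Q(u_n/2^n)^t=(r/2^n)^s(r/2^n)^{Q-s}\delta^{\phi(t-Q)}\le(r/2^n)^s,
\]
because $r/2^n\le\delta^\theta$ and $\theta(Q-s)+\phi(t-Q)=0$. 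This is Lemma~\ref{lem:parameter-comparison} with $Q$ in place of $d$. For $\eta\in(\theta,\phi]$, apply it with $t_\eta=Q-(\theta/\eta)(Q-s)\le t$. For $\eta\le\theta$, use $t>s$. Together these give $\Phi^{t,\phi}_\delta\lesssim\Phi^{s,\theta}_\delta$. Your pairing $\eta\mapsto\eta\phi/\theta$ also works, but only at the same $\delta$. There $J_{\delta,\eta\phi/\theta}\subset J_{\delta,\eta}$, and the same cost identity holds with $(\eta,\eta\phi/\theta)$ in place of $(\theta,\phi)$.
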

\begin{theorem}\label{thm:Q-clusters}
Assume \eqref{eq:Q-growth}, fix $0<\beta<1$, and put $\alpha=1-\beta$.
Let $I\subset\N$ have bounded gaps, in the sense that
$I\cap[N,N+h]\ne\varnothing$ for all sufficiently large integers $N$
and some fixed $h$. Suppose that, for $n\in I$ sufficiently large,
there are $z_n\in X$ and nonempty sets $A_n$ satisfying
\begin{equation}\label{eq:Q-cluster-assumptions}
 A_n\subset S_n\cap B_X(z_n,C_0 2^{\beta n}),\qquad
 c_0 2^{Q\beta n}\leq\#A_n\leq C_1 2^{Q\beta n},
\end{equation}
where $C_0\geq1$ and $c_0,C_1>0$ are independent of $n$.
Set $E_\beta=\{o\}\cup\bigcup_{n\in I,\,n\geq n_0}A_n$, where $n_0$ is a sufficiently large integer for which \eqref{eq:Q-cluster-assumptions} holds. Then
\begin{equation}\label{eq:Q-cluster-spectrum}
 \lDim_\theta E_\beta=\uDim_\theta E_\beta=F_{\beta,Q}(\theta):=
 \begin{cases}
 0,&0\leq\theta\leq\alpha,\\[1mm]
 Q(1-\alpha/\theta),&\alpha<\theta\leq1.
 \end{cases}
\end{equation}
Both mass dimensions equal $Q\beta$, and for $\alpha\leq\theta<\phi\leq1$,
\begin{equation}\label{eq:Q-sharpness}
 F_{\beta,Q}(\phi)-F_{\beta,Q}(\theta)
 =\left(1-\frac{\theta}{\phi}\right)(Q-F_{\beta,Q}(\theta)).
\end{equation}
\end{theorem}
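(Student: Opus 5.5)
The plan is to compute the shell contents $\Phi_{n,\delta,o}^{s,\eta}(E_\beta)$ directly for $n\in J_{\delta,\eta}$, up to constants, and then sum over shells and take the supremum over $\eta\le\theta$. Fix $n\in I$ large and $\eta\in[0,1]$, and put $N=N_\delta$, so $\delta\asymp 2^{-N}$. The admissible radii in shell $n$ are $1\le r\le 2^{n}\delta^\eta\asymp 2^{n-\eta N}$. The cluster $A_n$ has diameter $\lesssim 2^{\beta n}$ and cardinality $\asymp 2^{Q\beta n}$, and I will split into two regimes.

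\emph{Regime 1: $2^{n-\eta N}\gtrsim 2^{\beta n}$.} Then one ball of radius $\asymp 2^{\beta n}$ centered at a point of $A_n\subset S_n$ covers $A_n$. This gives $\Phi_n\lesssim 2^{-\alpha n s}$.

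\emph{Regime 2: $2^{n-\eta N}\ll 2^{\beta n}$.} Here I cover $A_n$ by balls of the maximal radius $r=2^{n-\eta N}$. By \eqref{eq:Q-growth} and a packing argument, about $(2^{\beta n}/r)^Q$ balls suffice. This gives $\Phi_n\lesssim 2^{(\beta n-n+\eta N)Q}\,2^{-\eta N s}$.

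For the matching lower bound I will use a mass distribution argument. Put the uniform probability on $A_n$. By the upper bound in \eqref{eq:Q-growth}, a ball of radius $r$ carries mass $\lesssim \min(1, r^Q 2^{-Q\beta n})$. Hence each ball contributes at most
\[
(r/2^n)^s\cdot \max\bigl(1,\ 2^{Q\beta n}r^{-Q}\bigr)^{-1}
\]
to the covering mass. Optimising over $r$ in the admissible range (using $s\le Q$) gives $\Phi_n\gtrsim$ the same two expressions. Shells not in $I$ are empty, apart from $o$, which contributes at most one bounded term.

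Next I sum over $n\in J_{\delta,\eta}=[N,N/\eta]$. Regime 1 is $n(1-\beta)\ge \eta N$, that is $n\ge \eta N/\alpha$. The Regime 1 terms form a geometric tail $\sum 2^{-\alpha n s}$, dominated by $n\approx \max(N,\eta N/\alpha)$. The Regime 2 terms have exponent $(\eta N-\alpha n)Q-\eta N s$, which decreases in $n$, so they are dominated by $n=N$ provided $\eta N/\alpha>N$, i.e.\ $\eta>\alpha$. Bounded gaps in $I$ ensure there is some $n\in I$ within $h$ of $N$, costing only constants. Writing $\Gamma\approx 2^{N g(\eta,s)}$, I expect
\[
g(\eta,s)=\begin{cases}
-\alpha s, & \eta\le\alpha,\\
\max\bigl((\eta-\alpha)Q-\eta s,\ -\eta s\bigr), & \eta>\alpha,
\end{cases}
\]
with the last term coming from the Regime 1 endpoint $n\approx \eta N/\alpha$ inside $[N,N/\eta]$ (since $\alpha<1$), plus at most polynomial factors in $N$. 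The condition $\sup_{\eta\le\theta}g(\eta,s)\le0$ is equivalent to $s\ge \sup_{\alpha<\eta\le\theta}Q(1-\alpha/\eta)=Q(1-\alpha/\theta)$. If $\theta\le\alpha$, every $s>0$ works, so $s=0$ is critical. This yields $F_{\beta,Q}(\theta)$.

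Because the estimates hold for all small $\delta$ uniformly, the liminf and limsup agree and both dimensions coincide. The logarithmic factors from counting shells disappear after dividing by $-\log\delta$. At $s=F$ the borderline polynomial factor only affects the critical value through the infimum, so I will argue with $s=F\pm\epsilon$.

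The mass dimensions follow from $\#(E_\beta\cap B(o,2^n))\asymp\sum_{k\le n,k\in I}2^{Q\beta k}\asymp 2^{Q\beta n}$, again using bounded gaps. The identity \eqref{eq:Q-sharpness} is algebra: $Q-F(\theta)=Q\alpha/\theta$ and $F(\phi)-F(\theta)=Q\alpha(1/\theta-1/\phi)$, and at $\theta=\alpha$ both sides agree.

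I expect the main obstacle to be the lower bound in Regime 2. Centers must lie in $S_n$, and the balls meet $A_n$ with the right mass only via the upper volume bound. A further subtlety is the boundary case where $A_n$ sits near the shell edge, which I will handle with the same uniform volume bounds.
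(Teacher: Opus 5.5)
Your proposal is correct and follows essentially the paper's route. The upper bound uses the same two-regime internal cover: boundedly many balls of radius $\asymp 2^{\beta n}$ when $u=2^n\delta^\eta$ is large, and $\lesssim(2^{\beta n}/u)^Q$ balls of radius $u$ otherwise. The lower bound uses the same volume count $\#A_n\le v_+\sum_i r_i^Q$ on one occupied shell $n\in I\cap[N_\delta,N_\delta+h]$, with the auxiliary parameter near $\theta$.

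One small correction: at $\eta=1$ the index set $J_{\delta,1}=\{N_\delta\}$ can miss $I$. In the lower bound you should therefore use a fixed $\eta<\theta$ and then let $\eta\to\theta$; the paper instead uses $\eta_N=\min\{\theta,N/(N+h)\}$. Also, the obstacle you anticipate about centers lying in $S_n$ does not arise, since $\#B_X(x,r)\le v_+r^Q$ holds for every center $x\in X$.
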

\begin{corollary}
\label{cor:Q-existence}
Every space satisfying \eqref{eq:Q-growth} contains, for each $0<\beta<1$,
a set with the spectrum and mass dimensions in
Theorem~\ref{thm:Q-clusters}. In particular, the modulus
\eqref{eq:Q-modulus} is sharp in each such space.
\end{corollary}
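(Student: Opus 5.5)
The corollary reduces to Theorem~\ref{thm:Q-clusters}. We fix a space $X$ satisfying \eqref{eq:Q-growth} and $0<\beta<1$, and build clusters $A_n$ satisfying \eqref{eq:Q-cluster-assumptions} for all sufficiently large $n$, so that $I=\N$, which has bounded gaps. The construction is as follows.

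\emph{Choosing the centres.} Since $X$ is unbounded, for each large $n$ we pick a point $z_n\in X$ with
\[
d(o,z_n)\in\bigl[\tfrac34 2^{n},\,\tfrac34 2^{n}+K\bigr]
\]
for a fixed constant $K$. Such points exist only if $X$ has no large distance gaps, and this should follow from the lower bound in \eqref{eq:Q-growth}. Indeed, if the annulus $B(o,R+K)\setminus B(o,R)$ were empty, then $B(o,R+K)$ would coincide with $B(o,R)$. The lower bound applied to balls centred at far points then gives a contradiction once $K$ is large compared with $(v_+/v_-)^{1/Q}$. I expect this step to be the main (mild) obstacle. The cleanest route is probably the following: take any point $y$ with $d(o,y)\geq 2^n$ and walk from $o$ toward it through a chain of points with consecutive distances bounded by $K$. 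A coarse connectedness statement of this kind should itself follow from the growth bounds by a counting argument. If that fails, it suffices to find $z_n$ with $d(o,z_n)\in[\frac58 2^n,\frac78 2^n]$, which only requires points at every dyadic-scale distance.

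\emph{Defining the clusters.} We set
\[
A_n=B_X(z_n,2^{\beta n}).
\]
For large $n$ we have $2^{\beta n}\leq\frac18 2^n$, so the triangle inequality places $A_n$ inside $S_n=B(o,2^n)\setminus B(o,2^{n-1})$. Applying \eqref{eq:Q-growth} with $R=2^{\beta n}\geq 1$ gives
\[
v_-2^{Q\beta n}\leq\#A_n\leq v_+2^{Q\beta n}.
\]
Thus \eqref{eq:Q-cluster-assumptions} holds with $C_0=1$, $c_0=v_-$ and $C_1=v_+$.

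\emph{Conclusion.} Theorem~\ref{thm:Q-clusters} now applies to $E_\beta=\{o\}\cup\bigcup_{n\geq n_0}A_n$. It gives the spectrum $F_{\beta,Q}$, mass dimensions $Q\beta$, and the identity \eqref{eq:Q-sharpness}. For sharpness, Theorem~\ref{thm:Q-growth} provides the upper bound \eqref{eq:Q-modulus}. Since $\lDim_\theta E_\beta=\uDim_\theta E_\beta=F_{\beta,Q}(\theta)$, the identity \eqref{eq:Q-sharpness} shows that this bound is attained with equality for every $\alpha\le\theta<\phi\le 1$. Hence the modulus cannot be improved in $X$.
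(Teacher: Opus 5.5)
There is a genuine gap, and it sits at the step you flagged: the existence of the centres $z_n$. To place the full ball $B_X(z_n,2^{\beta n})$ inside $S_n$ by the triangle inequality, you need, for every large $n$, a point whose distance from $o$ lies well inside $[2^{n-1},2^n)$. The growth bounds \eqref{eq:Q-growth} do not provide this.

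Your counting argument, carried out correctly, gives only a multiplicative statement. If $B_X(o,R+K)=B_X(o,R)$, then $v_-(R+K)^Q\leq v_+R^Q$, so $K\leq((v_+/v_-)^{1/Q}-1)R$. Gaps of size proportional to $R$ are therefore allowed, and no coarse connectedness follows. For example, $X=\{0\}\cup\bigcup_{k\geq0}(\Z\cap[8^k,2\cdot8^k])$, with the metric of $\R$ and $o=0$, satisfies \eqref{eq:Q-growth} with $Q=1$. Yet $S_{3k+3}=\varnothing$ for every $k$. For those $n$ no $A_n\subset S_n$ with $\#A_n\gtrsim2^{\beta n}$ exists, so taking $I$ to be all large integers is impossible.

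Even rich shells need not contain points away from their inner boundary. Glue the cubes $Y_n=\{0,\ldots,2^{n-2}-1\}^Q$, with the $\ell^\infty$ metric, to $o$ by setting $d(o,y)=2^{n-1}$ for $y\in Y_n$ and $d(y,y')=2^{n-1}+2^{m-1}$ for $y\in Y_n$, $y'\in Y_m$, $m\neq n$. This space satisfies \eqref{eq:Q-growth}, with $S_n=Y_n$, and every point of $S_n$ is at distance exactly $2^{n-1}$ from $o$. Neither your window $[\frac34 2^n,\frac34 2^n+K]$ nor the fallback $[\frac58 2^n,\frac78 2^n]$ ever contains a point. This is why the paper stresses that shells may be empty. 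It is also why Theorem~\ref{thm:Q-clusters} is stated for a bounded-gap index set $I$ and for clusters $A_n\subset S_n\cap B_X(z_n,C_02^{\beta n})$ rather than full balls.

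The missing idea is to choose clusters by pigeonhole, not by position. Take $I$ from Lemma~\ref{lem:Q-rich-shells}, so that $\#S_n\geq c_*2^{Qn}$ for $n\in I$ and $I$ has bounded gaps. With $L_n=2^{\beta n}$, Lemma~\ref{lem:Q-packing} covers $S_n\subset B_X(o,2^n)$ by at most $A(2^n/L_n)^Q$ balls of radius $L_n$ centred in $S_n$. Hence one of them, $B_X(z_n,L_n)$, satisfies $\#(S_n\cap B_X(z_n,L_n))\geq(c_*/A)L_n^Q$. Put $A_n=S_n\cap B_X(z_n,L_n)$. It lies in $S_n$ by construction, and $\#A_n\leq v_+L_n^Q$ by \eqref{eq:Q-growth}. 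Then \eqref{eq:Q-cluster-assumptions} holds with $C_0=1$. Your final paragraph on the spectrum, the mass dimensions and the sharpness of \eqref{eq:Q-modulus} then goes through unchanged.
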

For clusters as in Theorem~\ref{thm:Q-clusters}, fix $0\leq\rho<Q\beta$.
Independently retain each $x\in A_n$ with probability $p_n=2^{-\rho n}$.
Equivalently, take independent Bernoulli variables $\xi_{n,x}$ with
$\mathbb P(\xi_{n,x}=1)=p_n$ on their countable product probability space.
Set
\begin{equation*}
A_n^\omega=\{x\in A_n:\xi_{n,x}(\omega)=1\},\qquad
E_{\beta,\rho}^\omega=\{o\}\cup\bigcup_{n\in I,\ n\geq n_0}A_n^\omega.
\end{equation*}
\begin{theorem}\label{thm:Q-thinning}
On a single event of probability one, simultaneously for every
$\theta\in[0,1]$,
\begin{equation}\label{eq:Q-random-spectrum}
 \lDim_\theta E_{\beta,\rho}^\omega
 =\uDim_\theta E_{\beta,\rho}^\omega=G_{\beta,\rho,Q}(\theta),
\end{equation}
where $G_{\beta,\rho,Q}(0)=0$ and, for $\theta>0$,
\[
 G_{\beta,\rho,Q}(\theta)
 =\min\left\{Q\beta-\rho,
 Q\left(1-\frac{1-\beta}{\theta}\right)_+\right\},
 \qquad t_+=\max\{t,0\}.
\]
Both mass dimensions of $E_{\beta,\rho}^\omega$ equal $Q\beta-\rho$.
If $0<\rho<Q\beta$, put
\begin{equation}\label{eq:Q-random-transition}
 \theta_* =\frac{Q(1-\beta)}{Q(1-\beta)+\rho}.
\end{equation}
Then $1-\beta<\theta_*<1$, and the spectrum is zero on $[0,1-\beta]$,
equals $Q(1-(1-\beta)/\theta)$ on $(1-\beta,\theta_*)$, and is
constant with value $Q\beta-\rho$ on $[\theta_*,1]$.
\end{theorem}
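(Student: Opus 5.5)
The plan is to reduce Theorem~\ref{thm:Q-thinning} to a deterministic statement about the realized sets $A_n^\omega$, in the same spirit as Theorem~\ref{thm:Q-clusters}, after fixing a single almost-sure event on which every cluster has controlled size and controlled spread.

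\textbf{Step 1: the almost-sure event.} Since $\mathbb E\#A_n^\omega=p_n\#A_n\asymp 2^{(Q\beta-\rho)n}$ and $Q\beta-\rho>0$, Chernoff bounds give summable failure probabilities for
\[
\tfrac12 c_0 2^{(Q\beta-\rho)n}\le\#A_n^\omega\le 2C_1 2^{(Q\beta-\rho)n}.
\]
We also need a spread statement, since a thinned cluster could in principle collapse into a few small balls. For radii $r=2^k$ with $k\le \beta n$, we bound the number of points in each ball $B_X(y,r)$ with $y$ in a maximal $r$-net of $B_X(z_n,C_02^{\beta n})$. The count in such a ball is at most binomial with mean $\lesssim r^Q2^{-\rho n}$, by \eqref{eq:Q-growth}. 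A union bound over the polynomially many net points and $O(n)$ scales shows that, with summable failure probability, every such ball contains at most $C r^Q2^{-\rho n}+n^2$ retained points. Borel--Cantelli then yields one event $\Omega_0$ of full probability on which both estimates hold for all large $n\in I$. On $\Omega_0$ the mass dimensions equal $Q\beta-\rho$ immediately, using the bounded-gap hypothesis on $I$. Everything after this step is deterministic and uses only these two properties, which is what makes the conclusion hold simultaneously for all $\theta$.

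\textbf{Step 2: upper bound.} First, $G(\theta)\le Q(1-(1-\beta)/\theta)_+$, because $E^\omega_{\beta,\rho}\subset E_\beta$. Monotonicity of $\Phi$ under inclusion then gives the bound from Theorem~\ref{thm:Q-clusters}. Second, $\uDim_\theta\le\uDim_1\le\dim_{\mathrm{UM}}=Q\beta-\rho$. This follows from monotonicity in $\theta$ together with the general bound in Proposition~\ref{dBD}, transported to $X$. In $X$ I would argue directly: cover each retained point by a ball of radius $1$, so the cost on shell $n$ is $\#A_n^\omega 2^{-ns}$, which is summable when $s>Q\beta-\rho$. At $\theta=0$, Proposition~\ref{gjH}, or its analogue via discrete Hausdorff dimension, together with $G\le F_{\beta,Q}(0)=0$ gives $0$.

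\textbf{Step 3: lower bound.} This is the main obstacle. Fix $\theta>1-\beta$ and $s<G(\theta)$, and bound $\Phi^{s,\theta}_{n,\delta}(A_n^\omega)$ from below for shells $n$ near $N_\delta/\theta$, where radii reach $2^n\delta^\theta\approx 2^{n(1-\theta')}$ with $\theta'$ close to $\theta$.

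I would use a mass distribution argument, namely the mass distribution principle proved later in the paper, placing uniform mass $1/\#A_n^\omega$ on $A_n^\omega$. A ball of radius $r\ge 1$ carries mass at most
\[
\frac{\min\{\#A_n^\omega,\ Cr^Q2^{-\rho n}+n^2\}}{\#A_n^\omega}.
\]
The cost of such a ball is $(r/2^n)^s$, so it suffices to check that the mass bound is at most $(r/2^n)^s$ up to a factor $2^{o(n)}$, for all $1\le r\le 2^n\delta^\theta$.

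For $r\le 2^{\beta n}$, the mass bound is $\lesssim r^Q2^{-Q\beta n}+n^22^{-(Q\beta-\rho)n}$, and we compare it with $(r/2^n)^s$. The extremal constraints are at two points. At $r$ equal to the maximal radius, comparing with the cluster diameter gives $s\le Q(1-(1-\beta)/\theta)$. At $r=1$ the constraint is $s\le Q\beta-\rho$, and the $n^2$ term costs only a subexponential factor. In between, both terms are monotone powers of $r$, so checking the endpoints suffices. This yields a shell content $\gtrsim 2^{-o(n)}$, hence $\Phi_\delta^{s,\theta}\gtrsim\delta^{o(1)}$ along all small $\delta$, which gives the $\liminf$ bound.

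The delicate points are the $\theta'$-versus-$\theta$ bookkeeping across the shells in $J_{\delta,\eta}$, the bounded gaps of $I$, and ensuring the random spread bound covers every admissible radius. Finally, the description in terms of $\theta_*$ is algebra: solving $Q(1-(1-\beta)/\theta)=Q\beta-\rho$ gives \eqref{eq:Q-random-transition}.
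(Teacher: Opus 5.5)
Your architecture matches the paper's. You fix one almost-sure event carrying a count bound and a local occupation bound $\#(A_n^\omega\cap B_X(x,r))\lesssim p_nr^Q+(\text{polynomial in }n)$. The upper bound comes from $E_{\beta,\rho}^\omega\subset E_\beta$ together with unit-ball covers. The lower bound comes from a single shell, comparing the occupation bound with $(r/2^n)^s$ at $r=1$ and at the radius cap. Two side remarks: the net has $2^{O(n)}$ points rather than polynomially many, but your $n^2$ slack absorbs this; and inclusion-monotonicity of the intrinsic contents holds only up to a constant, after recentering via Lemma~\ref{lem:Q-packing}.

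\textbf{The shell choice in Step 3 is wrong.} On every shell of $J_{\delta,\theta}$ the cap has relative size $\delta^\theta\approx2^{-N_\delta\theta}$. So at $n\approx N_\delta/\theta$ the cap is about $2^{n(1-\theta^2)}$, not $2^{n(1-\theta')}$ with $\theta'$ near $\theta$. Your endpoint computation silently uses $2^{n(1-\theta)}$, which is the cap at $n\approx N_\delta$. Redone at the top shells, the comparison at $r=u$ gives only $s\le Q(1-\alpha/\theta^2)_+$, where $\alpha=1-\beta$. This is strictly below $Q(1-\alpha/\theta)$ for $\theta<1$, and it is zero on $(\alpha,\sqrt\alpha]$. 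The mass distribution method is not to blame. Covering $A_n^\omega$ by balls of radius $\min\{u,C_02^{\beta n}\}$ shows that the content of those shells tends to zero for every $s>Q(1-\alpha/\theta^2)_+$. So on the increasing branch these shells cannot witness $G_{\beta,\rho,Q}(\theta)$.

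\textbf{The fix.} The content sits at the bottom of the window. Take $n\in I\cap[N,N+h]$ with $N=N_\delta$, as the paper does. There $u=2^n\delta^\eta\approx2^{n(1-\eta)}$, and your argument gives exactly
$\sum_i(r_i/2^n)^s\gtrsim\min\{2^{n(m-s)}/n,\,2^{-Q\alpha n}\delta^{-\eta(Q-s)}\}$ with $m=Q\beta-\rho$. As $\eta\to\theta$, its normalized logarithm is at least $\min\{m-s,\theta(Q-s)-Q\alpha\}>0$.

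\textbf{The bounded-gap issue you left open.} When $\theta=1$ we have $J_{\delta,1}=\{N_\delta\}$, and $S_{N_\delta}$ may contain no points. So the mass distribution principle at parameter $\theta$ cannot be applied verbatim. The paper takes $\eta_N=\min\{\theta,N/(N+h)\}$, which puts $n$ in $J_{\delta,\eta_N}$, and then uses the supremum over $\eta\le\theta$ in $\Phi_{\delta,o}^{s,\theta}$. Alternatively, handle $\theta=1$ by monotonicity in $\theta$ and continuity of $G_{\beta,\rho,Q}$ at $1$.

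\textbf{The final passage to the liminf.} A bound $\Phi\gtrsim\delta^{o(1)}$ at exponent $s$ only gives a normalized liminf $\ge0$. To make it strictly positive, pass to $t<s$ with Lemma~\ref{lem:exponent-comparison}, or keep a strict margin as the paper does.
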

The word metric on the discrete Heisenberg group satisfies
\eqref{eq:Q-growth} with $Q=4$. Thus its full spectrum is constant at $4$,
and its sparse ball clusters realise both formulae above
(Corollary~\ref{thm:heisenberg-spectrum}). For the range $\mathcal R$ of
simple symmetric random walk on $\Z^d$, both spectra are almost surely
constant at $\min\{d,2\}$ (Corollary~\ref{cor:simple-random-walk}).

Section~\ref{sec:definitions} proves the endpoint and continuity results
and a mass distribution principle. Section~\ref{sec:lemmas} proves
quasi-isometric invariance. Section~\ref{sec:heisenberg} proves the growth
and spectrum formulae and gives the Heisenberg and random-walk applications.

\section{Endpoints and continuity}\label{sec:definitions}
	
	We first record the elementary content comparisons.
	\subsection{Monotonicity and range }

	\begin{lemma}\label{lem:theta-monotonicity}
		Let $E\subset\R^d$, $s\geq0$, $0<\delta<1$, and $0\leq\theta\leq\phi\leq1$.  Then
		\begin{equation}\label{contineq}
			\Phi_{\delta}^{s,\theta}(E)\leq \Phi_{\delta}^{s,\phi}(E).
		\end{equation}
		Consequently,
		\begin{equation}\label{dimineq}
			\lDim_{\theta}E\leq\lDim_{\phi}E,
			\qquad
			\uDim_{\theta}E\leq\uDim_{\phi}E.
		\end{equation}
	\end{lemma}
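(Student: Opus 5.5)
The inequality \eqref{contineq} is immediate from the definition \eqref{eq:Phi-delta}. Since $\theta\le\phi$, the index set $\{\eta:0\le\eta\le\theta\}$ is contained in $\{\eta:0\le\eta\le\phi\}$. A supremum over a larger set is at least as large, so
\[
\Phi_\delta^{s,\theta}(E)=\sup_{0\le\eta\le\theta}\Gamma_\delta^{s,\eta}(E)\le\sup_{0\le\eta\le\phi}\Gamma_\delta^{s,\eta}(E)=\Phi_\delta^{s,\phi}(E).
\]
This holds in $[0,+\infty]$, with the value $+\infty$ allowed. This is exactly why the regularization was introduced: no comparison between the individual $\Gamma_\delta^{s,\eta}$ for different $\eta$ is needed. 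Such a comparison would be delicate, because changing $\eta$ changes both the shell range $J_{\delta,\eta}$ and the radius bound $2^n\delta^\eta$, and these move in opposite directions.

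For \eqref{dimineq}, I would fix $s\ge0$. Since $\log$ is nondecreasing on $[0,+\infty]$ under the stated conventions, and $-\log\delta>0$, the bound \eqref{contineq} gives
\[
\frac{\log\Phi_\delta^{s,\theta}(E)}{-\log\delta}\le\frac{\log\Phi_\delta^{s,\phi}(E)}{-\log\delta}
\]
for every $\delta\in(0,1)$. Taking $\limsup_{\delta\searrow0}$, respectively $\liminf_{\delta\searrow0}$, preserves this inequality. Hence if $s$ belongs to the admissible set defining $\uDim_\phi E$, meaning the limsup for $\phi$ is $\le0$, then $s$ also belongs to the admissible set for $\theta$. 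The same holds for the lower dimensions.

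The admissible set for $\phi$ is therefore contained in the admissible set for $\theta$. Taking infima gives $\uDim_\theta E\le\uDim_\phi E$ and $\lDim_\theta E\le\lDim_\phi E$. The empty-set convention $\inf\emptyset=+\infty$ is consistent with this containment.

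There is no real obstacle here. The only point needing care is the extended-real arithmetic: the cases $\Phi=0$ and $\Phi=+\infty$ are covered by the conventions $\log0=-\infty$ and $\log(+\infty)=+\infty$, under which $\log$ remains order-preserving.
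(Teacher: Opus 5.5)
Your proposal is correct and follows essentially the same route as the paper: the content inequality comes from taking the supremum over the larger interval $[0,\phi]\supseteq[0,\theta]$, and the dimension inequalities follow by passing to critical exponents. Your spelled-out containment of admissible exponent sets, together with the extended-real conventions, is just a more explicit version of the paper's one-line argument.
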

	
	\begin{proof}
		The interval $[0,\theta]$ is contained in $[0,\phi]$, so the content inequality \eqref{contineq} follows directly from \eqref{eq:Phi-delta}.  Passing to the critical exponents in Definition \ref{def:dimensions} gives the dimension inequalities \eqref{dimineq}.
	\end{proof}

	\begin{lemma}
		\label{lem:exponent-comparison}
		Let $E\subset\R^d$, $0\leq s<t$, $\eta\in[0,1]$, and
		$0<\delta<1$. Then
		\begin{equation}\label{lemma2.2_1}
			\Gamma_\delta^{t,\eta}(E)
			\leq
			\delta^{\eta(t-s)}\Gamma_\delta^{s,\eta}(E).
		\end{equation}
		In particular, for every $\theta\in[0,1]$,
		\begin{equation}\label{lemma2.2_2}
			\Phi_\delta^{t,\theta}(E)\leq\Phi_\delta^{s,\theta}(E).
		\end{equation}
	\end{lemma}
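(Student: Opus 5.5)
The plan is to prove the inequality shell by shell and then pass to the sums and suprema. Fix $n\in J_{\delta,\eta}$ and consider any admissible cover of $E\cap S_n$ by balls $B(x_i,r_{n,i})$ with $x_i\in S_n$ and $1\le r_{n,i}\le 2^n\delta^\eta$. The admissibility constraints do not depend on the exponent, so the same cover is admissible for both $s$ and $t$. For each ball the radius constraint gives $r_{n,i}/2^n\le\delta^\eta$. Hence
\[
(r_{n,i}/2^n)^t=(r_{n,i}/2^n)^{t-s}(r_{n,i}/2^n)^s\le\delta^{\eta(t-s)}(r_{n,i}/2^n)^s .
\]
Summing over $i$ and taking the infimum over admissible covers yields
\[
\Phi_{n,\delta}^{t,\eta}(E)\le\delta^{\eta(t-s)}\Phi_{n,\delta}^{s,\eta}(E).
\]
This inequality is trivially true when $E\cap S_n=\emptyset$, since both sides are zero. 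It also holds when no finite admissible cover exists, since both sides are then $+\infty$ (the factor $\delta^{\eta(t-s)}$ is positive). In the Euclidean setting that case does not occur anyway.

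Next, sum over $n\in J_{\delta,\eta}$. The index set is the same for both exponents, and the factor $\delta^{\eta(t-s)}$ does not depend on $n$. This gives \eqref{lemma2.2_1}, with the usual convention that $+\infty$ on the right makes the inequality vacuous. When $\eta=0$ the factor equals $1$ and the inequality is just termwise comparison, so the possibly divergent series causes no difficulty.

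For \eqref{lemma2.2_2}, note that $0<\delta<1$, $\eta\ge0$ and $t>s$ give $\delta^{\eta(t-s)}\le1$. Therefore $\Gamma_\delta^{t,\eta}(E)\le\Gamma_\delta^{s,\eta}(E)$ for every $\eta\in[0,\theta]$. Taking the supremum over $0\le\eta\le\theta$ in \eqref{eq:Phi-delta} gives the claim.

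I do not expect a genuine obstacle here. The only points needing care are the extended-real conventions: empty shells, infinite contents, and the divergent $\eta=0$ series. Each of these is handled by the observation that the factor $\delta^{\eta(t-s)}$ is positive and at most $1$.
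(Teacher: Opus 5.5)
Your proposal is correct and follows the paper's proof: the pointwise bound $(r_{n,i}/2^n)^t\le\delta^{\eta(t-s)}(r_{n,i}/2^n)^s$ from $r_{n,i}/2^n\le\delta^\eta$, then the infimum in each shell, the sum over $J_{\delta,\eta}$, and the supremum over $\eta\in[0,\theta]$ using $\delta^{\eta(t-s)}\le1$. Your added remarks on empty shells, infinite contents and the divergent $\eta=0$ series are consistent with the paper's conventions and only make explicit what it leaves implicit.
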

	
	\begin{proof}
		For each $n\in J_{\delta,\eta}$ and every radius occurring in an
		admissible cover of $E\cap S_n$, one has
		$0<r_{n,i}/2^n\leq\delta^\eta\leq1$. Therefore
		\[
		\left(\frac{r_{n,i}}{2^n}\right)^t
		\leq
		\delta^{\eta(t-s)}
		\left(\frac{r_{n,i}}{2^n}\right)^s.
		\]
		Taking the infimum over admissible covers for each
		$n\in J_{\delta,\eta}$ and then summing over $n$ proves \eqref{lemma2.2_1}. Since $\delta^{\eta(t-s)}\leq1$, taking the supremum over
		$0\leq\eta\leq\theta$ proves \eqref{lemma2.2_2}.
	\end{proof}
The admissible exponents in either dimension definition form an upper
set by Lemma~\ref{lem:exponent-comparison}. If its infimum is $D<\infty$
and $s>D$, there is an admissible $q<s$; exponent monotonicity then
makes $s$ admissible. We use this observation when passing from contents
to dimensions. The same argument applies to the intrinsic metric contents.

	\begin{proposition}
		\label{prop:basic-range}
		For every $E\subset\R^d$ and every $\theta\in[0,1]$,
		\begin{equation*}
			0\le\lDim_\theta E\le\uDim_\theta E\le d.
		\end{equation*}
	\end{proposition}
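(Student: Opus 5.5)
Three inequalities need proof: $0\le\lDim_\theta E$, $\lDim_\theta E\le\uDim_\theta E$, and $\uDim_\theta E\le d$. The first holds because every admissible exponent is nonnegative. For the second, fix $s$ with $\limsup_{\delta\searrow0}\log\Phi_\delta^{s,\theta}(E)/(-\log\delta)\le0$. Then the corresponding $\liminf$ is also $\le0$. So the admissible set for $\uDim_\theta E$ is contained in that for $\lDim_\theta E$, and taking infima gives $\lDim_\theta E\le\uDim_\theta E$. By the remark after Lemma~\ref{lem:exponent-comparison}, the admissible exponents form an upper set. It therefore suffices to show that every $s>d$ satisfies
\[
\limsup_{\delta\searrow0}\frac{\log\Phi_\delta^{s,\theta}(E)}{-\log\delta}\le0 .
\]

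The key step is a uniform shell bound. Fix $s>d$, $\eta\in[0,\theta]$ and $n\in J_{\delta,\eta}$. Cover $E\cap S_n$ by balls of the smallest allowed radius $r=1$, with centres in $S_n$. The shell $S_n$ lies in $B(0,2^n)$, so a maximal $1/2$-separated subset of $S_n$ has at most $C_d2^{nd}$ points. The unit balls about these points cover $S_n$. Hence
\[
\Phi_{n,\delta}^{s,\eta}(E)\le C_d 2^{nd}2^{-ns}=C_d2^{-n(s-d)} .
\]
This choice is admissible for every $\eta$, since $1\le 2^n\delta^\eta$ whenever $n\ge N_\delta$ (we have $2^n\delta^\eta\ge 2^{N_\delta}\delta\ge1$). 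For $n=0$ the bound is trivial, since $S_0$ is bounded.

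Summing over $n\in J_{\delta,\eta}\subseteq\{n\ge N_\delta\}$ gives
\[
\Gamma_\delta^{s,\eta}(E)\le C_d\sum_{n\ge N_\delta}2^{-n(s-d)}\lesssim_{s,d}2^{-N_\delta(s-d)}\le\delta^{s-d}\le1 .
\]
This bound holds uniformly in $\eta\in[0,\theta]$, including $\eta=0$, where $J_{\delta,0}$ is infinite but the series converges because $s>d$. Taking the supremum over $\eta$ gives $\Phi_\delta^{s,\theta}(E)\lesssim1$. Therefore $\log\Phi_\delta^{s,\theta}(E)/(-\log\delta)\le O(1)/(-\log\delta)\to0$. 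If $E$ meets no relevant shells, the content is $0$ and $\log0=-\infty$, so the condition holds trivially. Thus every $s>d$ is admissible for the upper dimension, and $\uDim_\theta E\le d$.

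The only point needing care is admissibility of unit radii, namely $2^n\delta^\eta\ge1$ for $n\ge N_\delta$. This follows from $\delta\ge2^{-N_\delta}$ and $\eta\le1$. The rest is the standard volume count in $\R^d$.
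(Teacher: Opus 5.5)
Your proof is correct and follows essentially the same route as the paper. In both, unit-ball covers centred at a maximal separated subset of $S_n$ give the shell bound $\lesssim 2^{-n(s-d)}$ uniformly in $\eta$, and summing the geometric tail over $n\ge N_\delta$ shows every $s>d$ is admissible; you additionally write out the trivial inequalities $0\le\lDim_\theta E\le\uDim_\theta E$ and the admissibility check $2^n\delta^\eta\ge1$, which the paper leaves implicit.
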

	
	\begin{proof}
We only need to show $\usdim E\le d.$  A maximal $1$-separated subset of $S_n$ has cardinality $O(2^{nd})$, and the corresponding unit balls cover $S_n$.  Hence, uniformly in $\eta\in[0,1]$,
\[
\Phi_{n,\delta}^{s,\eta}(E)\lesssim 2^{-n(s-d)}.
\]
Hence, for $s>d$,
\[
\Gamma_\delta^{s,\eta}(E)
\lesssim \sum_{n\ge N_\delta}2^{-n(s-d)}
\lesssim 2^{-N_\delta(s-d)},
\]
with constants independent of $\eta$.  Taking the supremum over $0\le\eta\le\theta$ gives the same estimate for $\Phi_\delta^{s,\theta}(E)$.  Since $2^{-N_\delta}\asymp\delta$,
\[
\limsup_{\delta\searrow0}
\frac{\log\Phi_\delta^{s,\theta}(E)}{-\log\delta}
\le -(s-d)<0.
\]
Thus $\uDim_\theta E\le s$ for every $s>d$, and letting $s\searrow d$ proves the result.
	\end{proof}

	\subsection{Continuity of the dimensions.}
	We prove Proposition~\ref{thm:continuity} by comparing covers at two parameters.
	\begin{lemma}
		\label{lem:shell-covering}
		There is a constant \(C_d\ge1\), depending only on \(d\), such that every
		set
		\[
		G\subset B(x,R)\cap S_n,
		\qquad
		R\ge u>0,
		\]
		can be covered by at most $C_d\left(1+\frac Ru\right)^d$
		balls of radius \(u\) whose centers belong to \(G\).
	\end{lemma}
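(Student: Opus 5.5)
We argue by a maximal separated set, exactly as in the volume-packing argument behind Proposition~\ref{prop:basic-range}. If $G=\varnothing$ there is nothing to prove, so assume $G\neq\varnothing$. Choose a maximal $u$-separated subset $\{y_1,\dots,y_k\}\subset G$. One exists and is finite by the bound below: any $u$-separated subset of $G$ is finite with cardinality bounded uniformly, so a maximal one can be built greedily. By maximality, every point of $G$ lies within distance less than $u$ of some $y_j$. Hence the open balls $B(y_j,u)$ cover $G$, and their centers lie in $G$, as required.

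It remains to bound $k$. The balls $B(y_j,u/2)$ are pairwise disjoint because the $y_j$ are $u$-separated. Since $y_j\in G\subset B(x,R)$, each such ball is contained in $B(x,R+u/2)$. Comparing Lebesgue measures gives
\[
k\,\omega_d (u/2)^d\le \omega_d (R+u/2)^d,
\]
where $\omega_d$ is the volume of the unit ball. Therefore
\[
k\le \Bigl(\frac{2R}{u}+1\Bigr)^d\le 2^d\Bigl(1+\frac Ru\Bigr)^d,
\]
so $C_d=2^d$ works. The constraint $G\subset S_n$ is not needed for the counting. It matters only because the centers, lying in $G$, then automatically lie in $S_n$, which is the admissibility requirement in Definition~\ref{def:Phi}. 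Note also that the hypothesis $R\ge u$ is not used: the bound holds for any $R,u>0$.

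The only step requiring care is that the centers must belong to $G$ rather than to an arbitrary grid. This is why we use a maximal separated subset of $G$ itself instead of a cubic lattice cover, which would not keep the centers inside $S_n$. No other step poses an obstacle.
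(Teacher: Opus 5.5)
Your proof is correct and follows the paper's argument: a maximal $u$-separated subset of $G$ gives a cover by radius-$u$ balls with centers in $G$, and volume comparison of the disjoint balls $B(y_j,u/2)\subset B(x,R+u/2)$ bounds their number. Your constant $C_d=2^d$ is slightly better than the paper's $3^d$, since $(2R+u)/u\le 2(1+R/u)$.
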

	
	\begin{proof}
		Choose a maximal \(u\)-separated subset
		\(\{z_1,\ldots,z_M\}\subset G\).  Maximality implies that the
		\(u\)-balls centered at the \(z_j\) cover \(G\).  The balls
		\(B(z_j,u/2)\) are pairwise disjoint and lie in
		\(B(x,R+u/2)\).  Comparison of their \(d\)-dimensional volumes gives
		\[
		M
		\le
		\left(\frac{2R+u}{u}\right)^d
		\le
		3^d\left(1+\frac Ru\right)^d.
		\]
		Thus one may take \(C_d=3^d\).
	\end{proof}
	
	\begin{lemma}
		\label{lem:parameter-comparison}
		Let $E\subset\R^d$, $0<\theta<\phi\le1$, and $0\le s<d$.  Put
		\begin{equation*}
			T_{\theta,\phi}(s)
			=d-\frac{\theta}{\phi}(d-s).
		\end{equation*}
		Then
		\begin{equation*}
			\Gamma_\delta^{T_{\theta,\phi}(s),\phi}(E)
			\le C_d'\Gamma_\delta^{s,\theta}(E)
		\end{equation*}
		for every $0<\delta<1$, where $C_d'$ depends only on $d$.
	\end{lemma}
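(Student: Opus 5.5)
The plan is a shell-by-shell cover transport: from a near-optimal cover at parameter $\theta$, build an admissible cover at parameter $\phi$ whose cost at exponent $T=T_{\theta,\phi}(s)$ is at most a constant times the original cost at exponent $s$. The shell index sets are already aligned. Since $\theta<\phi$, we have $\lceil N_\delta/\phi\rceil\le\lceil N_\delta/\theta\rceil$, so $J_{\delta,\phi}\subseteq J_{\delta,\theta}$. All terms are nonnegative, so it suffices to prove
\[
\Phi_{n,\delta}^{T,\phi}(E)\le C_d'\,\Phi_{n,\delta}^{s,\theta}(E)\qquad(n\in J_{\delta,\phi}),
\]
and then sum over $n\in J_{\delta,\phi}$. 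Empty shells give zero on both sides.

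Fix $n\in J_{\delta,\phi}$ and an admissible $\theta$-cover $\{B(x_i,r_i)\}$ of $E\cap S_n$, so $1\le r_i\le 2^n\delta^\theta$ and $x_i\in S_n$. Put $u=2^n\delta^\phi$. First check that $u\ge1$: because $n\ge N_\delta$ and $\delta\ge 2^{-N_\delta}$, we get $u\ge 2^{N_\delta(1-\phi)}\ge1$, so radius $u$ is admissible at parameter $\phi$. Next record $s\le T\le d$, which holds because $T-s=(1-\theta/\phi)(d-s)\ge0$.

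Now split the balls by size.

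\emph{Small balls, $r_i\le u$.} Keep the ball unchanged; it is already $\phi$-admissible. With $x=r_i/2^n\le1$ and $T\ge s$, its cost satisfies $x^T\le x^s$.

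\emph{Large balls, $u<r_i\le2^n\delta^\theta$.} Apply Lemma~\ref{lem:shell-covering} to $G=E\cap S_n\cap B(x_i,r_i)$ with $R=r_i$. This covers $G$ by at most $C_d(1+r_i/u)^d\le 2^dC_d(r_i/u)^d$ balls of radius $u$, each centered in $G\subset S_n$, so the new balls are admissible. Write $x=r_i/2^n\in(\delta^\phi,\delta^\theta]$ and $y=u/2^n=\delta^\phi$. The replacement cost is at most
\[
2^dC_d\,x^d y^{T-d}=2^dC_d\,x^d\delta^{-\theta(d-s)},
\]
because $\phi(d-T)=\theta(d-s)$. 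Since $x\le\delta^\theta$ we have $x^{d-s}\le\delta^{\theta(d-s)}$, hence $x^d\delta^{-\theta(d-s)}\le x^s$.

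Combining the two cases, the new cover is finite, admissible for parameter $\phi$, and has $T$-cost at most $2^dC_d\sum_i(r_i/2^n)^s$. Taking the infimum over $\theta$-covers gives the shell inequality with $C_d'=2^dC_d=6^d$, and summing over $n\in J_{\delta,\phi}$ gives the lemma.

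The only delicate step is the exponent bookkeeping in the large-ball case. The choice $T=d-\frac{\theta}{\phi}(d-s)$ is exactly what makes the worst case $x=\delta^\theta$ balance, so I would verify that identity carefully. The remaining checks, $u\ge1$ and $J_{\delta,\phi}\subseteq J_{\delta,\theta}$, are routine.
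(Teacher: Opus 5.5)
Your proof is correct and follows the paper's argument essentially step for step: split each admissible $\theta$-cover at $u=2^n\delta^\phi$, keep the small balls, re-cover the large ones by radius-$u$ balls via Lemma~\ref{lem:shell-covering}, and use $\phi(d-T)=\theta(d-s)$ together with $r_i/2^n\le\delta^\theta$ to bound the cost. Your explicit checks that $u\ge1$ and $J_{\delta,\phi}\subseteq J_{\delta,\theta}$, and your explicit constant $C_d'=2^dC_d$ (which absorbs the $(1+r_i/u)^d\le 2^d(r_i/u)^d$ step), fill in details the paper leaves implicit.
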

	
	\begin{proof}
Put $t=T_{\theta,\phi}(s)$ and $u_n=2^n\delta^\phi\geq1$.
For $n\in J_{\delta,\phi}\subset J_{\delta,\theta}$, take an admissible
$(\delta,\theta)$-cover. Retain balls of radius $r\leq u_n$; their
$t$-cost is at most their $s$-cost. For $r>u_n$, cover the intersection
with $E\cap S_n$ by at most $C_d'(r/u_n)^d$ balls of radius $u_n$,
using Lemma~\ref{lem:shell-covering}. Their total cost is at most
\[
C_d'(r/u_n)^d(u_n/2^n)^t
=C_d'(r/2^n)^s(r/2^n)^{d-s}\delta^{\phi(t-d)}
\leq C_d'(r/2^n)^s,
\]
since $r/2^n\leq\delta^\theta$ and
$\theta(d-s)+\phi(t-d)=0$. Taking infima and summing over
$J_{\delta,\phi}\subset J_{\delta,\theta}$ proves the claim.
\end{proof}

	\begin{proof}[Proof of Proposition~\ref{thm:continuity}]
		We prove the lower estimate; the upper one is identical with $\limsup$ in place of $\liminf$.  The left-hand inequality is Lemma \ref{lem:theta-monotonicity}.
		
		Assume $\lDim_\theta E<d$ and choose
		$\lDim_\theta E<s<d$.  Set
		\[
		t=T_{\theta,\phi}(s)=d-\frac{\theta}{\phi}(d-s).
		\]
		We claim that
		\begin{equation}\label{eq:regularized-parameter-comparison}
			\Phi_\delta^{t,\phi}(E)
			\le C_d'\Phi_\delta^{s,\theta}(E).
		\end{equation}
		Indeed, fix $0\le\eta\le\phi$.  If $\eta\le\theta$, then $t>s$ and Lemma \ref{lem:exponent-comparison} gives
		$\Gamma_\delta^{t,\eta}(E)\le\Gamma_\delta^{s,\eta}(E)\le\Phi_\delta^{s,\theta}(E)$.  If $\theta<\eta\le\phi$, put
		$t_\eta=T_{\theta,\eta}(s)$.  Then $t_\eta\le t$, so
		\[
		\Gamma_\delta^{t,\eta}(E)
		\le\Gamma_\delta^{t_\eta,\eta}(E)
		\le C_d'\Gamma_\delta^{s,\theta}(E)
		\le C_d'\Phi_\delta^{s,\theta}(E)
		\]
		by Lemma \ref{lem:parameter-comparison}.  Taking the supremum over $\eta$ proves \eqref{eq:regularized-parameter-comparison}.
		
		Since $s>\lDim_\theta E$, the upper-set observation after
Lemma~\ref{lem:exponent-comparison} gives
		\[
		\liminf_{\delta\searrow0}
		\frac{\log\Phi_\delta^{s,\theta}(E)}{-\log\delta}\le0.
		\]
		Equation \eqref{eq:regularized-parameter-comparison} therefore implies
		$\lDim_\phi E\le t$.  Letting $s\searrow\lDim_\theta E$ proves \eqref{eq:lower-continuity-modulus}.  The case $\lDim_\theta E=d$ follows from Proposition \ref{prop:basic-range}.  The proof for the upper dimension is the same.
	\end{proof}
	
	While the above proposition establishes the continuity of discrete intermediate dimensions on $(0, 1]$, the following example demonstrates that a discontinuity can occur at $\theta = 0$.
	
	\begin{example}[A jump at zero]\label{ex:discontinuity-at-zero}
For $n\geq4$, put $M_n=\lceil2^{n-\sqrt n}\rceil$ and
\[
E_n=\{2^{n-1},2^{n-1}+1,\ldots,2^{n-1}+M_n\}\subset S_n,
\qquad E=\bigcup_{n\geq4}E_n.
\]
Then $\lDim_0E=\uDim_0E=0$, whereas
$\lDim_\theta E=\uDim_\theta E=1$ for every $0<\theta\leq1$.
\begin{proof}
A ball of radius $2\cdot2^{n-\sqrt n}$ centered at the left endpoint
covers $E_n$. For $s>0$ and $N=N_\delta$ sufficiently large,
\[
\Phi_\delta^{s,0}(E)\lesssim\sum_{n\geq N}2^{-s\sqrt n}
\lesssim_s\sqrt N\,2^{-s\sqrt N}.
\]
The normalized logarithm has limsup at most zero, giving the zero endpoint.

Fix $0<\theta\leq1$ and $0\leq s<1$. At shell $N$, admissible radii
satisfy $1\leq r_i\leq u=2^N\delta^\theta$. Each ball covers at most
$3r_i$ integers, so $\sum_i r_i\geq(M_N+1)/3$. Therefore
\[
\sum_i(r_i/2^N)^s
\geq2^{-Ns}u^{s-1}\sum_i r_i
\gtrsim2^{-\sqrt N}\delta^{-\theta(1-s)}.
\]
Since $N\in J_{\delta,\theta}$, taking infima gives
\[
\liminf_{\delta\downarrow0}
\frac{\log\Phi_\delta^{s,\theta}(E)}{-\log\delta}
\geq\theta(1-s)>0.
\]
Thus $\lDim_\theta E\geq1$, and the range bound gives equality.
\end{proof}
\end{example}
	
	 A fixed exponential cluster size gives a continuous, nonconstant spectrum.
	\begin{example}\label{continuousspectrum}
		Let $0<\beta<1$ and write $\theta_0:=1-\beta.$ For a sufficiently large integer $N=N(\beta)$, define
		\[
		\text{$E_\beta=\bigcup_{n=N}^\infty E_n,$ where $E_n=\left\{2^{n-1}, 2^{n-1}+1,\ldots, 2^{n-1}+
			\left\lceil 2^{\beta n}\right\rceil\right\}$.}
		\]
		Then $\lDim_\theta E_\beta=\uDim_\theta E_\beta=f_\beta(\theta)$, where
		\[
		f_\beta(\theta)=
		\begin{cases}
			0, & 0\le \theta\le \theta_0,\\
			1-\frac{\theta_0}{\theta}, & \theta_0<\theta\le1.
		\end{cases}
		\]
		
		\begin{figure}[htbp]
\centering
\begin{tikzpicture}
\begin{axis}[
 width=10.5cm,height=6.2cm,
 axis lines=left,xmin=-0.025,xmax=1.07,ymin=-0.035,ymax=0.72,
 xlabel={$\theta$},ylabel={$f_\beta(\theta)$},
 xtick={0,0.4,1},xticklabels={$0$,$1-\beta=0.4$,$1$},
 ytick={0,0.6},yticklabels={$0$,$\beta=0.6$},
 tick label style={font=\small},label style={font=\small},
 clip=false]
\addplot[gray,densely dashed] coordinates {(0,0.6) (1,0.6) (1,0)};
\addplot[spectrumblue,very thick] coordinates {(0,0) (0.4,0)};
\addplot[spectrumblue,very thick,domain=0.4:1,samples=121] {1-0.4/x};
\addplot[only marks,mark=*,mark size=2pt,spectrumblue]
 coordinates {(0,0) (0.4,0) (1,0.6)};
\node[font=\small,anchor=south] at (axis cs:0.69,0.57)
 {$1-(1-\beta)/\theta$};
\end{axis}
\end{tikzpicture}
\caption{The spectrum in Example~\ref{continuousspectrum} for $\beta=0.6$.
It vanishes on $[0,1-\beta]$ and attains the continuity bound on its
increasing branch.}
\label{fig:euclidean-spectrum}
\end{figure}
		
		\begin{proof}
Apply Theorem~\ref{thm:Q-clusters} with $X=\Z$, $Q=1$,
$A_n=E_n$, and all sufficiently large integers in $I$.
Indeed, $\#B_\Z(x,R)\asymp R$ for $R\geq1$,
$\#E_n\asymp2^{\beta n}$, and, for all sufficiently large $n$,
$E_n\subset S_n\cap B_\Z(2^{n-1},2^{\beta n+1})$. Thus the cluster
hypotheses hold with $C_0=2$.
Lemma~\ref{lem:euclidean-intrinsic} identifies the intrinsic and
Euclidean dimensions. Adjoining the base point is a quasi-isometry,
so Theorem~\ref{thm:mainG} justifies that step.
This proves the asserted formula.
\end{proof}
	\end{example}
	\begin{remark}\label{rem:sharp-modulus}
		The spectrum attains the upper bound in Proposition \ref{thm:continuity}.   More precisely, observe that $f_\beta(1)=\beta$ and $f_\beta(\theta_0)=0$.  For
		$\theta_0<\theta<\phi\le 1$,
		\[
		f_\beta(\phi)-f_\beta(\theta)=\theta_0\left(\frac{1}{\theta}-\frac{1}{\phi}\right)
		=\left(1-\frac{\theta}{\phi}\right)(1-f_\beta(\theta)).
		\]
		 Thus Example~\ref{continuousspectrum} shows that the modulus in Proposition~\ref{thm:continuity} is sharp.

A natural question is which functions arise as discrete intermediate spectra. Banaji and Rutar \cite{BR22} answered the corresponding question for bounded Euclidean sets.
	\end{remark}

	\subsection{The endpoint \texorpdfstring{$\theta=0$}{theta=0} and the discrete Hausdorff dimension}\label{sec:hausdorff}
	We use the ball-cover formulation of Barlow--Taylor discrete Hausdorff
dimension in \cite[Section 2.1, Definition 2.2]{DS}:
	\[
	\hdim E = \inf \left\{ s \geq 0 : \sum_{n=0}^\infty \Psi_{n,\infty}^{s}(E) < \infty \right\},
	\]
	where
	\[
	\begin{split}
\Psi_{n,\infty}^s(E)=\inf\biggl\{
&\sum_{i\in\mathcal I_n}(r_{n,i}/2^n)^s:
E\cap S_n\subseteq\bigcup_{i\in\mathcal I_n}B(x_i,r_{n,i}),\\
&x_i\in S_n,\quad r_{n,i}\geq1,
\quad\mathcal I_n\text{ finite}\biggr\}.
\end{split}
\]
	For more properties  of the discrete Hausdorff dimension, see \cite{BT89,BT,DS}.
	
	The radius cap in the zero-parameter content does not change this critical exponent.
\begin{lemma}\label{kbj}
		Let $E\subset \R^d$ be  an unbounded set. Fix $s\ge 0$ and $\delta\in (0,1).$ There exists a constant $C=C(d,s)$ such that
		\[
		\Psi_{n,\infty}^{s}(E)\le \Phi_{n,\delta}^{s,0}(E)\le C\Psi_{n,\infty}^{s}(E)
		\]
		for any $n\ge N_\delta$.
	\end{lemma}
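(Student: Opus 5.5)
The lower bound $\Psi_{n,\infty}^{s}(E)\le \Phi_{n,\delta}^{s,0}(E)$ is immediate. For $\eta=0$ the admissibility constraint in $\Phi_{n,\delta}^{s,0}$ reads $1\le r_{n,i}\le 2^n\delta^0=2^n$. So every cover admissible for $\Phi_{n,\delta}^{s,0}$ is admissible for $\Psi_{n,\infty}^{s}$, and the infimum over a larger class is smaller. If $E\cap S_n=\emptyset$ both quantities vanish, so we may assume the shell is nonempty.

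For the upper bound, I would take an arbitrary admissible cover for $\Psi_{n,\infty}^s$, with centers $x_i\in S_n$ and radii $r_{n,i}\ge1$. I would modify it into one with all radii at most $2^n$, while increasing the cost by at most a factor $C(d,s)$. Balls with $r_{n,i}\le 2^n$ are kept unchanged.

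Any ball with $r_{n,i}>2^n$ is handled as follows. Its trace on $E\cap S_n$ lies in $S_n\subset B(0,2^n)$, so it is contained in $B(0,2^n)\cap S_n$. Apply Lemma~\ref{lem:shell-covering} with $G=E\cap S_n$ (or the relevant piece), $x=0$, $R=u=2^n$. This gives at most $C_d 2^d$ balls of radius $2^n$ covering $G$, with centers in $G\subset S_n$. These balls are admissible, and each costs $(2^n/2^n)^s=1$.

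The original large ball cost $(r_{n,i}/2^n)^s\ge 1$. Hence its replacement costs at most $C_d2^d\le C_d2^d (r_{n,i}/2^n)^s$. In fact, a single replacement of the whole shell suffices as soon as one large ball is present. Summing over the balls gives
\[
\Phi_{n,\delta}^{s,0}(E)\le C_d2^d\sum_i (r_{n,i}/2^n)^s .
\]
Taking the infimum over covers yields the claim with $C=3^d2^d$. This constant depends only on $d$, and hence trivially on $(d,s)$.

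There is no serious obstacle. The only points to check are two edge cases. First, the centers of the replacement balls must lie in $S_n$, which Lemma~\ref{lem:shell-covering} provides since the centers lie in $G$. Second, the radius $2^n$ must satisfy the constraint $1\le 2^n$, which holds for $n\ge N_\delta\ge1$. The estimate is uniform in $\delta$ because the cap $2^n\delta^0$ does not depend on $\delta$.
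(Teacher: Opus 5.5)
Your proof is correct and follows essentially the same route as the paper. The lower bound comes from inclusion of admissible covers. For the upper bound, any ball of radius exceeding $2^n$ already costs at least one, so it can be exchanged for the boundedly many radius-$2^n$ balls with centers in the shell supplied by Lemma~\ref{lem:shell-covering}; your constant $3^d2^d$ plays the role of the paper's $\max\{1,M_d\}$.
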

	\begin{proof}
The first inequality follows by inclusion of the admissible covers.
For the reverse inequality, take a cover in the definition of
$\Psi_{n,\infty}^s(E)$. If all radii are at most $2^n$, it is already
admissible. Otherwise its cost is at least one, whereas
Lemma~\ref{lem:shell-covering} covers the whole shell by at most $M_d$
balls of radius $2^n$ with centers in the shell. The capped content is
therefore at most $\max\{1,M_d\}$ times the original cost. Take infima.
\end{proof}

	We can now identify the zero endpoint.
	
	\begin{proof}[Proof of Proposition~\ref{gjH}]
Bounded sets have both dimensions zero, so suppose $E$ is unbounded.
At $\eta=0$, each shell content is independent of $\delta$.
By Lemma~\ref{kbj}, its series converges precisely when the corresponding
Hausdorff shell series converges. For the capped contents the convergence
exponents form an upper set, since $r_i/2^n\leq1$. Thus if $s>\hdim E$, the tails tend to zero,
so the normalized logarithmic content has limsup at most zero. If
$s<\hdim E$, every tail is infinite, so this exponent is inadmissible
for either dimension. The two critical exponents are therefore $\hdim E$.
\end{proof}
	
	\subsection{The endpoint \texorpdfstring{$\theta=1$}{theta=1} and the discrete box dimensions}\label{sec:mass}
	We call $\lDim_1E$ and $\uDim_1E$ the lower and upper discrete
box-type dimensions. They are the endpoints of the regularized family,
and should not be identified with the entropy indices of \cite{BT}.
Recall the mass dimensions defined in Section~1. Put
$a_n(E)=\#(E\cap S_n)$.

For the reverse inequality under the uniform discreteness assumption,
	we  need to record the following three elementary facts.
	
	\begin{lemma}\label{lem:bounded-balls}
		Let $E\subset\R^d$ be uniformly discrete.  There is a constant $M<\infty$ such that
		\[
		\#(E\cap B(x,r))\le M
		\]
		for every $x\in\R^d$ and every $1\le r<2$.
	\end{lemma}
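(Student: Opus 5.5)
Let $\varepsilon>0$ be the uniform discreteness constant, so that distinct points of $E$ are at distance at least $\varepsilon$. The plan is a volume-packing argument.

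Fix $x\in\R^d$ and $1\le r<2$. Consider the open balls $B(y,\varepsilon/2)$ for $y\in E\cap B(x,r)$. By uniform discreteness these balls are pairwise disjoint: if $z$ lay in two of them, with centres $y\ne y'$, then $|y-y'|<\varepsilon$, which is a contradiction. Each such ball is contained in $B(x,r+\varepsilon/2)\subset B(x,2+\varepsilon/2)$.

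Comparing Lebesgue measures gives
\[
\#(E\cap B(x,r))\,(\varepsilon/2)^d\le (2+\varepsilon/2)^d.
\]
Hence
\[
\#(E\cap B(x,r))\le\left(\frac{4+\varepsilon}{\varepsilon}\right)^d=:M,
\]
and $M$ is independent of $x$ and $r$. This also shows the intersection is finite, so the cardinality is well defined.

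There is no real obstacle here. The only point to check is that the bound does not depend on the centre $x$, which may lie outside $E$; the volume comparison above handles arbitrary $x$. Alternatively, one could invoke Lemma~\ref{lem:shell-covering}-style packing with separation $\varepsilon$, but the direct volume argument is self-contained.
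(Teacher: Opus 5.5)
Your proof is correct and is essentially the paper's argument: disjoint balls of radius half the separation constant around the points of $E\cap B(x,r)$, all contained in $B(x,r+\varepsilon/2)$, followed by Lebesgue volume comparison. You obtain the same constant $M=(1+4/\varepsilon)^d$.
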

	
	\begin{proof}
If $c>0$ is a separation constant, the disjoint balls of radius
$c/2$ around points of $E\cap B(x,r)$ lie in $B(x,r+c/2)$.
Volume comparison gives $\#(E\cap B(x,r))\leq(1+4/c)^d$ for $1\leq r<2$.
\end{proof}

	\begin{lemma}\label{lem:local-equivalence}
		Let $E\subset\R^d$ be uniformly discrete, $s\ge0$, $0<\delta<1$, and put $n=N_\delta$.  Then
		\begin{equation*}
			M^{-1}a_n(E)2^{-ns}
			\le \Gamma_\delta^{s,1}(E)
			=\Phi_{N_\delta,\delta}^{s,1}(E)
			\le a_n(E)2^{-ns}.
		\end{equation*}
	\end{lemma}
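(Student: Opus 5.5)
The plan has three steps: reduce $\Gamma_\delta^{s,1}$ to a single shell, bound that shell from above by a unit-ball cover, and bound it from below by counting how many points of $E$ each admissible ball can hold.

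\textbf{Reduction to one shell.} Put $n=N_\delta$. Since $2^{-N_\delta}\le\delta<2^{-N_\delta+1}$, we have $\lceil N_\delta/1\rceil=N_\delta$. Hence $J_{\delta,1}=\{N_\delta\}$, and by \eqref{eq:Gamma-delta}
\[
\Gamma_\delta^{s,1}(E)=\Phi_{N_\delta,\delta}^{s,1}(E).
\]
This is the middle equality. At this shell the radius cap is $2^n\delta\in[1,2)$, so every admissible radius satisfies $1\le r<2$.

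\textbf{Upper bound.} If $a_n(E)=0$ the content is zero by convention and all three quantities vanish, so assume $a_n(E)\ge1$. Since $E$ is uniformly discrete, $a_n(E)<\infty$. Cover $E\cap S_n$ by the balls $B(x,1)$, $x\in E\cap S_n$. Their centers lie in $S_n$ and the radius $1$ is admissible. The cost of this cover is $a_n(E)2^{-ns}$, which gives $\Gamma_\delta^{s,1}(E)\le a_n(E)2^{-ns}$.

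\textbf{Lower bound.} Take any admissible finite cover $\{B(x_i,r_i)\}_{i\in\mathcal I}$ of $E\cap S_n$. Each radius satisfies $1\le r_i<2$, so Lemma~\ref{lem:bounded-balls} gives $\#(E\cap B(x_i,r_i))\le M$. Covering $a_n(E)$ points therefore forces $\#\mathcal I\ge a_n(E)/M$. Since $r_i\ge1$, each ball costs at least $2^{-ns}$. The total cost is thus at least $M^{-1}a_n(E)2^{-ns}$, and taking the infimum over covers completes the proof.

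There is no real obstacle. The two points needing care are the observation that $J_{\delta,1}$ reduces to the single index $N_\delta$, and checking that the admissible radii fall within the range $[1,2)$ required by Lemma~\ref{lem:bounded-balls}.
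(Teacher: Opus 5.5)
Your proposal is correct and follows essentially the same argument as the paper. You use $J_{\delta,1}=\{N_\delta\}$ with admissible radii in $[1,2)$, the unit balls at the points of $E\cap S_n$ for the upper bound, and Lemma~\ref{lem:bounded-balls} combined with the per-ball cost $2^{-ns}$ for the lower bound.
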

	
	\begin{proof}
For $n=N_\delta$, the index set $J_{\delta,1}$ contains only $n$,
and $1\leq r_i\leq2^n\delta<2$. Unit balls centered at all points of
$E\cap S_n$ give the upper bound. Each admissible ball contains at most
$M$ points by Lemma~\ref{lem:bounded-balls}, and has cost at least
$2^{-ns}$, giving the lower bound.
\end{proof}
	
	\begin{lemma}\label{lem:shell-upper-mass}
		Let $E\subset\R^d$ be an unbounded uniformly discrete set.  Then
		\begin{equation}\label{eq:shell-upper-mass}
			\limsup_{n\to\infty}
			\frac{\log(1+a_n(E))}{n\log2}
			=\dim_{\mathrm{UM}}E.
		\end{equation}
	\end{lemma}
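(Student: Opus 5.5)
We need to show that $\limsup_n \log(1+a_n)/(n\log 2)$ equals $\limsup_n \log A_n/(n\log 2)$, where $A_n=\#(E\cap B(0,2^n))=\sum_{k\le n}a_k$. Here $a_0$ counts points of $B(0,1)$, which includes the shell $S_0$. Since $E$ is unbounded and uniformly discrete, it is locally finite, so every $A_n$ is finite; moreover $A_n\to\infty$, so $A_n\geq1$ for large $n$. Write $D$ for the left side of \eqref{eq:shell-upper-mass} and $D'=\dim_{\mathrm{UM}}E$. The plan is to compare the two sequences directly, using only that $A_n$ is a partial sum of the $a_k$. Uniform discreteness enters only through finiteness; it also gives the crude bound $a_n\lesssim 2^{nd}$ by Lemma~\ref{lem:bounded-balls}, so $D,D'\le d$ are finite. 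That finiteness is convenient but not essential.

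For $D\le D'$: we have $a_n\le A_n$, so $1+a_n\le 1+A_n\le 2A_n$ once $A_n\ge1$. Taking logarithms, dividing by $n\log2$ and passing to the limsup gives $D\le D'$, since the factor $2$ disappears in the normalization.

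For $D'\le D$: fix $s>D$. Then there is $n_1$ with $1+a_k\le 2^{ks}$ for all $k\ge n_1$. Hence
\[
A_n\le A_{n_1}+\sum_{k=n_1}^n 2^{ks}\le A_{n_1}+(n+1)2^{ns},
\]
which handles $s\ge0$ uniformly, since the sum has at most $n+1$ terms each at most $2^{ns}$. Thus $\log A_n\le ns\log2+O(\log n)$, so $D'\le s$. Letting $s\searrow D$ gives $D'\le D$. The case $D=+\infty$ is trivial, and is in any case excluded by the bound above.

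The only points needing care are bookkeeping ones. The first is that $D\ge0$ automatically, since $\log(1+a_n)\ge0$; this is why the statement uses $1+a_n$, avoiding $\log 0$ for empty shells. The second is that the partial-sum estimate must not lose more than a polynomial factor in $n$. I expect no real obstacle; the polynomial-factor step in the second inequality is the only estimate, and it is immediate.
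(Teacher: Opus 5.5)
Your proposal is correct and follows essentially the same route as the paper: the bound $1+a_n\le 2A_n$ for large $n$ gives one inequality, and summing an eventual shell bound $a_k\le 2^{ks}$ over $k\le n$ gives the other. The only cosmetic difference is that you bound the partial sum crudely by $(n+1)2^{ns}$, whereas the paper sums the geometric series to get $A_n\lesssim 2^{(\beta+\varepsilon)n}$; either estimate suffices after normalizing logarithms.
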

	
	\begin{proof}
		Since $E$ is unbounded, $A_n(E)\geq1$ for all sufficiently large $n$.
		As $a_n(E)\leq A_n(E)$, we then have
		\[
		\log(1+a_n(E))\leq\log(1+A_n(E))\leq\log2+\log A_n(E).
		\]
		It follows that the left-hand side of \eqref{eq:shell-upper-mass} is at
		most $\dim_{\mathrm{UM}}E$.  Conversely, let the left-hand side be $\beta$ and fix $\varepsilon>0$.  For all sufficiently large $j$,
		$a_j(E)\le2^{(\beta+\varepsilon)j}$.  Hence
		\[
		A_n(E)\le A_{j_0-1}(E)+\sum_{j=j_0}^n2^{(\beta+\varepsilon)j}
		\lesssim2^{(\beta+\varepsilon)n}.
		\]
		Thus $\dim_{\mathrm{UM}}E\le\beta+\varepsilon$, and \eqref{eq:shell-upper-mass} follows by letting $\varepsilon\searrow0$.
	\end{proof}

	\begin{proof}[Proof of Proposition~\ref{dBD}]
Write $q=\dim_{\mathrm{UM}}E$. If $q=+\infty$ there is nothing to prove
for the upper bound. Otherwise, for $t>q$, choose $\varepsilon>0$ with
$t>q+\varepsilon$. At all sufficiently large shells,
$a_n(E)\leq2^{(q+\varepsilon)n}$. Unit-ball covers are admissible for
all auxiliary parameters, so
\[
\Phi_\delta^{t,1}(E)\leq\sum_{n\geq N_\delta}
a_n(E)2^{-nt}\lesssim\delta^{t-q-\varepsilon}.
\]
Hence $\uDim_1E\leq q$.

For uniformly discrete $E$, disjoint balls of a fixed radius give
$A_n(E)\lesssim2^{nd}$, so $q\leq d<\infty$. If $q>0$, take $0\leq u<q$.
At $\delta_n=2^{-n}$, Lemma~\ref{lem:local-equivalence} gives
$\Phi_{\delta_n}^{u,1}(E)\geq M^{-1}a_n(E)2^{-nu}$.
Lemma~\ref{lem:shell-upper-mass} implies
$\limsup_n\log a_n(E)/(n\log2)=q$ when $q>0$:
on a subsequence realizing this positive rate, $a_n(E)\geq1$ eventually
and $0\leq\log(1+a_n(E))-\log a_n(E)\leq\log2$. Thus the normalized
logarithmic content has limsup at least $q-u>0$, proving
$\uDim_1E\geq q$. The case $q=0$ follows from nonnegativity.
\end{proof}

	The next example shows that \eqref{eq:upper-mass-general-bound} can be strict and, consequently,
	that equality in \eqref{eq:upper-mass-endpoint} can fail without uniform discreteness.
	\begin{example}
		\label{ex:uniform-discreteness-necessary}
		For each $n\geq1$, define
		\[
		E_n
		=
		\left\{
		3\cdot2^{n-2}+j\,2^{-3n}:0\leq j<2^n
		\right\},
		\qquad
		E=\bigcup_{n\geq1}E_n\subset\R.
		\]
		Then $E\cap S_n=E_n$, $\#E_n=2^n$, and
$A_N(E)=2^{N+1}-2$, so both mass dimensions are $1$.
The set is locally finite but not uniformly discrete. Since
$\diam E_n<1$, each cluster has an admissible unit-ball cover. For $s>0$,
\[
\Phi_\delta^{s,1}(E)\leq\sum_{n\geq N_\delta}2^{-ns}
\lesssim_s\delta^s.
\]
Thus $\uDim_1E=0<1=\dim_{\mathrm{UM}}E$.
\end{example}

	The following example shows that the analogue of
	\eqref{eq:upper-mass-general-bound} obtained by replacing both upper
	dimensions with their lower counterparts fails in general, even for
	uniformly discrete sets.
	
	\begin{example}[Failure of the lower mass identity]\label{ex:lower-endpoint-mass-failure}
Put $m_k=k!$ for $k\geq2$ and
$E=\bigcup_{k\geq2}(S_{m_k}\cap\Z)$. Then
$\dim_{\mathrm{LM}}E=0$ but $\lDim_1E=1$.
\begin{proof}
At $n_k=m_{k+1}-1$,
\[
A_{n_k}(E)=\sum_{j=2}^k2^{m_j}\leq k2^{m_k},\qquad
\frac{\log A_{n_k}(E)}{n_k\log2}
\leq\frac{m_k+\log_2k}{m_{k+1}-1}\longrightarrow0.
\]
This proves the lower mass claim. For $0\leq s<1$, an $\eta=0$
cover of an occupied shell $S_m\cap\Z$ has
$1\leq r_i\leq2^m$ and $2^m\leq3\sum_i r_i$. Hence
\[
\sum_i(r_i/2^m)^s\geq2^{-m}\sum_i r_i\geq\tfrac13.
\]
There are infinitely many occupied shells. Hence every tail
$\Gamma_\delta^{s,0}(E)$ is infinite, as is $\Phi_\delta^{s,1}(E)$. Thus $\lDim_1E\geq1$; the range
bound gives equality. In fact, both spectra are identically one on $[0,1]$.
\end{proof}
\end{example}

	\subsection{A mass distribution principle}
The following criterion gives lower bounds by distributing mass across
admissible shells, in the spirit of the Hausdorff mass distribution
principle \cite{Fal}.
\begin{proposition}[Mass distribution principle]\label{Mmdp}
		Let $E\subset\R^d$ be an unbounded Borel set, let $s>0$, and let
		$0\leq\theta\leq1$.  Suppose that there are constants $c_0,c_1>0$
		and $\delta_0\in(0,1)$ such that, for every
		$0<\delta<\delta_0$, there is a finite Borel measure $\mu_\delta$
		supported on
		$
		E\cap\bigcup_{n\in J_{\delta,\theta}}S_n
		$
		with
		\begin{equation*}
			\mu_\delta\left(E\cap\bigcup_{n\in J_{\delta,\theta}}S_n\right)\geq c_0
		\end{equation*}
		and
		\begin{equation}\label{eq:mass-local}
			\mu_\delta(E\cap S_n\cap B(x,r))
			\leq c_1\left(\frac r{2^n}\right)^s
		\end{equation}
		for every $n\in J_{\delta,\theta}$, $x\in S_n$, and
		$1\leq r\leq2^n\delta^\theta$.  Then
		\begin{equation*}
			\lDim_\theta E\geq s.
		\end{equation*}
		If the same hypotheses hold only along a sequence
		$\delta_k\searrow0$ in $(0,1)$, with $c_0$ and $c_1$ independent of
		$k$, then
		\begin{equation*}
			\uDim_\theta E\geq s.
		\end{equation*}
	\end{proposition}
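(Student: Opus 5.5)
The idea is to bound each shell content from below by the measure of that shell piece, sum over the shells, and then use the regularization, which lets us choose $\eta=\theta$ inside the supremum. Fix $0<\delta<\delta_0$ and $n\in J_{\delta,\theta}$ with $E\cap S_n\neq\emptyset$. Take any admissible cover $\{B(x_i,r_{n,i})\}_{i\in\mathcal I_n}$ of $E\cap S_n$, so that $x_i\in S_n$ and $1\le r_{n,i}\le 2^n\delta^\theta$. By countable subadditivity and \eqref{eq:mass-local},
\[
\mu_\delta(E\cap S_n)\le\sum_{i\in\mathcal I_n}\mu_\delta(E\cap S_n\cap B(x_i,r_{n,i}))\le c_1\sum_{i\in\mathcal I_n}(r_{n,i}/2^n)^s .
\]
Taking the infimum over covers gives $\Phi_{n,\delta}^{s,\theta}(E)\ge c_1^{-1}\mu_\delta(E\cap S_n)$. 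If the shell is empty, both sides vanish.

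Now sum over $n\in J_{\delta,\theta}$. The shells $S_n$ are pairwise disjoint and $\mu_\delta$ is supported on $E\cap\bigcup_{n\in J_{\delta,\theta}}S_n$. Hence
\[
\Phi_\delta^{s,\theta}(E)\ge\Gamma_\delta^{s,\theta}(E)\ge c_1^{-1}\mu_\delta\Big(E\cap\bigcup_{n\in J_{\delta,\theta}}S_n\Big)\ge c_0/c_1 .
\]
Here the first inequality is the case $\eta=\theta$ of \eqref{eq:Phi-delta}. When $\theta=0$, $J_{\delta,0}$ is infinite, but disjointness and countable additivity still apply, and the series may be infinite, which is harmless.

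It remains to turn this into a dimension bound. For $0\le t<s$, Lemma~\ref{lem:exponent-comparison} gives
\[
\Gamma_\delta^{t,\theta}(E)\ge\delta^{-\theta(s-t)}\Gamma_\delta^{s,\theta}(E)\ge (c_0/c_1)\,\delta^{-\theta(s-t)}.
\]
If $\theta>0$, then $\log\Phi_\delta^{t,\theta}(E)/(-\log\delta)\ge\theta(s-t)+o(1)$ for all small $\delta$. The liminf is therefore positive, so $t$ is inadmissible and $\lDim_\theta E\ge t$; letting $t\nearrow s$ gives the claim.

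The main obstacle is the degenerate case, because the uniform lower bound $c_0/c_1$ alone only shows that the normalized logarithm is at least $0$, not that it is positive. When $\theta=0$, the factor $\delta^{-\theta(s-t)}$ gives no growth. I would instead use a sharper comparison at $\eta=0$. Radii are capped by $2^n$ and $n\ge N_\delta$, so each ball satisfies $(r/2^n)^t\ge(r/2^n)^s$, which gives $\Gamma_\delta^{t,0}(E)\ge\Gamma_\delta^{s,0}(E)\ge c_0/c_1$ for every $\delta$. Since $\Gamma_\delta^{t,0}$ is a tail series in $N_\delta$, it would tend to $0$ if it were finite. The uniform positive lower bound therefore forces the whole series $\sum_n\Phi_{n,\delta}^{t,0}(E)$ to diverge, so every tail is $+\infty$. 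Hence $\Phi_\delta^{t,0}(E)=+\infty$, and $t$ is inadmissible for both dimensions. Alternatively, the same divergence shows via Lemma~\ref{kbj} that $\hdim E\ge s$, and Proposition~\ref{gjH} then gives the bound.

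For the upper dimension, the same estimates hold along $\delta_k\searrow0$, with $c_0,c_1$ independent of $k$. This gives $\limsup_k\log\Phi_{\delta_k}^{t,\theta}(E)/(-\log\delta_k)\ge\theta(s-t)>0$ for $\theta>0$, and the same divergence argument for $\theta=0$. Hence $\uDim_\theta E\ge t$ for every $t<s$, and so $\uDim_\theta E\ge s$.
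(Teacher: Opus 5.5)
Your proposal is correct and follows the paper's proof essentially step for step. You sum the shell-wise bound to get $\Gamma_\delta^{s,\theta}(E)\ge c_0/c_1$, use the factor $\delta^{-\theta(s-t)}$ from Lemma~\ref{lem:exponent-comparison} when $\theta>0$, and use the tail-series divergence argument when $\theta=0$; your alternative for $\theta=0$ via Lemma~\ref{kbj} and Proposition~\ref{gjH} is also valid but not needed.
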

	
	\begin{proof}
For any scale at which the measure is available, apply
\eqref{eq:mass-local} to an admissible cover in each shell. Taking
infima and summing, using the disjoint shells and the total-mass
assumption, gives
\begin{equation}\label{eq:mass-gamma-lower}
\Gamma_\delta^{s,\theta}(E)\geq c_0/c_1.
\end{equation}
If $\theta>0$ and $0\leq t<s$, Lemma~\ref{lem:exponent-comparison}
therefore yields
\[
\Phi_\delta^{t,\theta}(E)\geq\Gamma_\delta^{t,\theta}(E)
\geq(c_0/c_1)\delta^{-\theta(s-t)}.
\]
Taking the normalized liminf gives $\lDim_\theta E\geq s$ when
measures exist at all small scales. Along a sequence of scales the same
estimate gives $\uDim_\theta E\geq s$.

For $\theta=0$, the left-hand side of \eqref{eq:mass-gamma-lower}
is a tail of a fixed nonnegative series with finite terms. A convergent
series has tails tending to zero, contradicting that bound even along
a sequence. The series therefore diverges and every tail is infinite.
By exponent monotonicity, all $t<s$ are inadmissible for both dimensions.
\end{proof}
	
	At $\theta=0$, the argument is a tail-series form of the macroscopic
mass distribution method; compare \cite[Proposition 2.5]{DS}.

\section{Quasi-isometric invariance}\label{sec:lemmas}
	We use the metric contents and packing condition defined in
Section~1. As in Lemma~\ref{lem:exponent-comparison}, for $t\geq s$,
\begin{equation*}
\Phi_{n,\delta,o}^{t,\eta}(X)\leq\Phi_{n,\delta,o}^{s,\eta}(X),\qquad
\Gamma_{\delta,o}^{t,\eta}(X)\leq\Gamma_{\delta,o}^{s,\eta}(X).
\end{equation*}
\subsection{Packing and intrinsic covers}
\begin{remark}\label{rem:regular-tree-ucp}
Uniform bounds on the cardinalities of balls of each fixed radius do not imply uniform coarse packing. For example, let $T$ be a rooted binary tree with its
vertex path metric. For an integer $k\geq1$, choose one descendant at level
$2k$ of each vertex at level $k$. These $2^k$ vertices belong to
$B_T(o,3k)$ and have pairwise distances at least $2k$. Consequently,
$\Pck_k(B_T(o,3k))\geq2^k$, so $T$ fails \eqref{eq:UCP}, although its
vertex degrees are uniformly bounded. Theorem~\ref{thm:mainG} therefore
makes no assertion for all bounded-degree trees or all hyperbolic graphs.
\end{remark}

	The following lemma is a generalization of Lemma \ref{lem:shell-covering}.
	\begin{lemma}\label{lem:internal-cover}
		
		Suppose that $X$ satisfies the uniform coarse packing condition. Let $x\in X$, $\Lambda\geq1$, $r\geq1$, and $G\subset B_X(x,\Lambda r)$. Then $G$ can be covered by at most $M_X(\Lambda)$ balls of radius $r$ whose centers belong to $G$.
	\end{lemma}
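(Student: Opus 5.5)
The plan is to copy the proof of Lemma~\ref{lem:shell-covering}, replacing volume comparison by the uniform coarse packing bound \eqref{eq:UCP}. First, if $G$ is empty there is nothing to prove, so assume $G\neq\varnothing$.

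The key step is to choose a maximal $r$-separated subset $Z=\{z_j\}\subset G$. Every $r$-separated subset of $G$ is an $r$-separated subset of $B_X(x,\Lambda r)$. Since $r\geq1$, Definition~\ref{def:UCP} then bounds its cardinality by
\[
\Pck_r(B_X(x,\Lambda r))\leq M_X(\Lambda)<\infty .
\]
So every $r$-separated subset of $G$ is finite with at most $M_X(\Lambda)$ points. Consequently a maximal one exists: start from any point of $G$ and add points while separation is preserved. This process must stop after at most $M_X(\Lambda)$ steps, so no Zorn's lemma is needed.

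Maximality then gives the covering property. If some $y\in G$ had $d_X(y,z_j)\geq r$ for all $j$, then $Z\cup\{y\}$ would still be $r$-separated, contradicting maximality. Hence every $y\in G$ satisfies $d_X(y,z_j)<r$ for some $j$, so $G\subset\bigcup_j B_X(z_j,r)$, using open balls as in the paper.

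The only point needing care, rather than a real obstacle, is matching the separation convention with the open balls. If ``$r$-separated'' means pairwise distances $\geq r$, the argument above works as written. If it means distances $>r$, a point at distance exactly $r$ could be left uncovered. In that case I would pick $Z$ maximal among sets with pairwise distances $\geq r$, which are $r'$-separated for every $r'<r$. The packing bound then has to be applied with $r'$ slightly less than $r$ in place of $r$. For $r>1$ one can instead shrink the separation to $r'\in[1,r)$, noting that $B_X(x,\Lambda r)\subset B_X(x,\Lambda' r')$ with $\Lambda'=\Lambda r/r'$. This would alter the constant slightly, so I expect the intended convention to be $\geq r$, and I will adopt it.
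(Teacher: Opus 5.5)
Your proposal is correct and follows the paper's proof exactly: take a maximal $r$-separated subset of $G$, bound its cardinality by $\Pck_r(B_X(x,\Lambda r))\le M_X(\Lambda)$, and use maximality to obtain the covering by open $r$-balls. The paper also uses the convention that separation means pairwise distances $\ge r$. Your greedy argument for the existence of a maximal set, which terminates by the packing bound, is a detail the paper leaves implicit.
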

	
	\begin{proof}
		Choose a maximal $r$-separated subset $Z$ of $G$. Since $Z\subset B_X(x,\Lambda r)$, condition \eqref{eq:UCP} gives $\card Z\leq M_X(\Lambda)$. If some $y\in G$ did not belong to $\bigcup_{z\in Z}B_X(z,r)$, then $d(y,z)\geq r$ for every $z\in Z$, so $y$ could be adjoined to $Z$, contradicting maximality. Thus the indicated balls cover $G$.
	\end{proof}
	
	The following proposition shows that the standard doubling condition implies uniform coarse packing. Recall that a metric space is called doubling if there exists an integer $N_D\geq1$ such that every ball of radius $R$ can be covered by at most $N_D$ balls of radius $R/2$, for every $R>0$.
	\begin{proposition}\label{prop:doubling-implies-UCP}
		Suppose that $X$ is doubling. Then $X$ satisfies the uniform coarse packing condition.
	\end{proposition}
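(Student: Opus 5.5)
The plan is to iterate the doubling property and then bound separated sets by a counting argument. Fix $\Lambda\geq1$, $x\in X$ and $r\geq1$, and choose an integer $k\geq0$ with $2^{k}\geq 4\Lambda$, so that $\Lambda r/2^{k}\leq r/4$. Applying the doubling property $k$ times, starting from $B_X(x,\Lambda r)$, covers this ball by at most $N_D^{k}$ balls of radius $\Lambda r/2^{k}\leq r/4$. At each stage we cover every ball of radius $R$ by $N_D$ balls of radius $R/2$; if a covering ball has empty intersection with the set, discard it. No recentering is needed, since doubling is stated for arbitrary balls.

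Now let $Z\subset B_X(x,\Lambda r)$ be $r$-separated. Each point of $Z$ lies in one of the final small balls. Any such ball has diameter at most $2\cdot r/4=r/2<r$, so it contains at most one point of $Z$. Hence $\#Z\leq N_D^{k}$. Since $k$ depends only on $\Lambda$ (for example $k=\lceil\log_2(4\Lambda)\rceil$), we obtain
\[
M_X(\Lambda)\leq N_D^{\lceil\log_2(4\Lambda)\rceil}<\infty,
\]
uniformly in $x$ and $r$. This is exactly \eqref{eq:UCP}.

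There is essentially no obstacle; the only care needed is with the strictness of the separation. If "$r$-separated" means distances $\geq r$, then a ball of radius $r/4$ contains at most one point, because two points in it are at distance $<r/2$. If it means distances $>r$, the same bound holds. Note that the hypothesis $r\geq1$ is not even used here; doubling gives the packing bound at all scales.
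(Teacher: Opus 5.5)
Your proposal is correct and follows essentially the same route as the paper: iterate the doubling cover $k=k(\Lambda)$ times until the radii drop below $r/2$ (you use $r/4$, the paper uses strictly less than $r/2$), then note that each small ball contains at most one point of an $r$-separated set, which gives $\Pck_r(B_X(x,\Lambda r))\leq N_D^k$ uniformly in $x$ and $r$.
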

	
	\begin{proof}
		Fix $\Lambda\geq1$, and choose an integer $k$ such that $\Lambda2^{-k}<1/2$. By iterating the doubling cover, the ball $B_X(x,\Lambda r)$ can be covered by at most $N_D^k$ balls of radius strictly smaller than $r/2$. Each such ball contains at most one point of an $r$-separated set, since its diameter is strictly smaller than $r$. Therefore
		\[
		\Pck_r(B_X(x,\Lambda r))\leq N_D^k
		\]
		for all $x\in X$ and all $r\geq1$.
	\end{proof}

	Clearly, Euclidean spaces are unbounded pointed metric spaces satisfying the uniform coarse packing condition. The following lemma guarantees that  Theorem \ref{thm:mainG} implies Theorem \ref{thm:main}.
	\begin{lemma}\label{lem:euclidean-intrinsic}
		Let $E\subset\R^d$ be unbounded, equipped with the induced Euclidean
		metric, and choose $o\in E$. Then, for every $\theta\in[0,1]$,
		\[
		\uDim_\theta E=\uDim_{\theta,o}E,
		\qquad
		\lDim_\theta E=\lDim_{\theta,o}E,
		\]
		where the dimensions on the left are those of
		Definitions~\ref{def:Phi}--\ref{def:dimensions}, whose shells are
		centered at $0\in\R^d$, and those on the right are the pointed
		metric-space dimensions of Definition~\ref{def:metric-dimensions}.
	\end{lemma}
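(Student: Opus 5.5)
The proof compares the two contents shell by shell when the base point is the origin, and then moves the base point using Theorem~\ref{thm:mainG}. Theorem~\ref{thm:mainG} is a statement about abstract pointed metric spaces, so its proof should not rely on the present lemma; I will assume this and use it only for changing the base point.

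\textbf{Step 1 (base point at the origin).} Put $E_0=E\cup\{0\}$, which is an unbounded subset of $\R^d$ with the induced metric. For $n\geq1$, the intrinsic shell $S_n^{E_0}(0)$ equals $E\cap S_n$. The shell $n=0$ never enters the contents, because $N_\delta\geq1$ and so $J_{\delta,\eta}\subset\N$. In particular the Euclidean contents of $E$ and $E_0$ coincide, and it suffices to compare the Euclidean content $\Phi^{s,\eta}_{n,\delta}(E)$ with the intrinsic content $\Phi^{s,\eta}_{n,\delta,0}(E_0)$ for $n\in J_{\delta,\eta}$.

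\emph{Intrinsic to Euclidean.} An intrinsic ball is $B_{E_0}(y,r)=B(y,r)\cap E_0$ with $y\in E\cap S_n\subset S_n$. An admissible intrinsic cover is therefore an admissible Euclidean cover with the same radii, which gives $\Phi^{s,\eta}_{n,\delta}(E)\le\Phi^{s,\eta}_{n,\delta,0}(E_0)$.

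\emph{Euclidean to intrinsic.} Take an admissible Euclidean cover $\{B(x_i,r_i)\}$ of $E\cap S_n$. For each $i$, apply Lemma~\ref{lem:shell-covering} with $R=u=r_i$ to $G_i=B(x_i,r_i)\cap E\cap S_n$. This covers $G_i$ by at most $2^dC_d$ balls of the same radius $r_i$, with centers in $G_i\subset E\cap S_n$. Keeping the radius unchanged is the essential point: the naive recentering $B(y_i,2r_i)$ could violate the cap $r_i\le 2^n\delta^\eta$. The resulting cover is admissible intrinsically, so $\Phi^{s,\eta}_{n,\delta,0}(E_0)\le 2^dC_d\,\Phi^{s,\eta}_{n,\delta}(E)$.

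The two inequalities hold uniformly in $n$, $\delta$ and $\eta$. Summing over $J_{\delta,\eta}$ and taking the supremum over $\eta\le\theta$ shows that the regularized contents are comparable up to the factor $2^dC_d$. This factor disappears in the normalized logarithm, so both the upper and lower Euclidean dimensions of $E$ equal the intrinsic ones of $(E_0,0)$.

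\textbf{Step 2 (moving the base point to $o$).} Every Euclidean subspace satisfies uniform coarse packing. I then chain two applications of Theorem~\ref{thm:mainG}:
\begin{itemize}
\item The identity $(E_0,0)\to(E_0,o)$ is a surjective isometry, so the intrinsic dimensions of $E_0$ do not depend on whether the base point is $0$ or $o$.
\item The inclusion $(E,o)\to(E_0,o)$ is an isometric embedding whose image is $|o|$-dense, since $0$ lies within distance $|o|$ of $o\in E$. It is therefore a $(1,0,|o|)$-quasi-isometry, so $(E,o)$ and $(E_0,o)$ have the same intrinsic dimensions.
\end{itemize}
Combining Steps 1 and 2 gives $\uDim_\theta E=\uDim_{\theta,o}E$ and $\lDim_\theta E=\lDim_{\theta,o}E$ for every $\theta\in[0,1]$.

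If one prefers to avoid Theorem~\ref{thm:mainG}, Step 2 can be done directly. The triangle inequality shows that, for $2^n>4|o|$, the shell $S_n^{E_0}(o)$ is contained in $S_{n-1}^{E_0}(0)\cup S_n^{E_0}(0)\cup S_{n+1}^{E_0}(0)$, and symmetrically. The shift in shell index and the resulting change in the caps would then be absorbed by the same scale–parameter adjustment used in Lemma~\ref{lem:content-comparison}.

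The main obstacle is the same-radius recentering in Step 1: any enlargement of the radii is incompatible with the cap $r_i\le2^n\delta^\eta$. Everything else is bookkeeping of bounded constants and of the harmless extra point $0$.
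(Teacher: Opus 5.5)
Your proof is correct and follows the paper's argument. The same-radius recentering via Lemma~\ref{lem:shell-covering} gives the two-sided comparison, with factor $C_d2^d$, against the intrinsic contents of $(E\cup\{0\},0)$; Theorem~\ref{thm:mainG} then absorbs the added point and the change of base point. The only cosmetic difference is that you split the final step into two applications of Theorem~\ref{thm:mainG}. The paper applies it once, to the inclusion $(E,o)\hookrightarrow(E\cup\{0\},0)$, since quasi-isometries need not preserve base points.
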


	\begin{proof}
Put $\widetilde E=E\cup\{0\}$ with base point zero, so that
$S_n^{\widetilde E}(0)=E\cap S_n$ for $n\geq1$.
An intrinsic cover is an ambient cover. Conversely, for each ball
$B(x_i,r_i)$ in an ambient cover, Lemma~\ref{lem:shell-covering}
covers $E\cap S_n\cap B(x_i,r_i)$ by at most $C_d2^d$ balls of the
same radius $r_i$ with centers in that intersection. Hence
\begin{equation*}
\Phi_{n,\delta}^{s,\eta}(E)
\leq\Phi_{n,\delta,0}^{s,\eta}(\widetilde E)
\leq C_d2^d\Phi_{n,\delta}^{s,\eta}(E).
\end{equation*}
Summing and taking the supremum identifies the ambient dimensions of
$E$ with the intrinsic dimensions of $(\widetilde E,0)$.
The inclusion $E\hookrightarrow\widetilde E$ is a quasi-isometry:
the added point lies within $|o|$ of $o\in E$. Euclidean subspaces
satisfy uniform coarse packing, so Theorem~\ref{thm:mainG} completes
the identification. Its proof below uses only intrinsic contents and
is independent of this lemma.
\end{proof}

	\subsection{\texorpdfstring{Proof of Theorem~\ref{thm:mainG}}{Proof of Theorem \ref{thm:mainG}}}
	To prove Theorem~\ref{thm:mainG}, we need the following three lemmas.
	\begin{lemma}\label{lem:shell-localization}
		Let $f:X\to Y$ be an $(L,c,C)$-quasi-isometry between unbounded
pointed metric spaces $(X,d_X,o_X)$ and $(Y,d_Y,o_Y)$. There exist integers $K\geq1$ and $\ell_0\geq1$ such that, for every $\ell\geq\ell_0$ and every $y\in S_\ell^Y(o_Y)$, one can choose $p(y)\in X$ satisfying
		\begin{equation}\label{eq:preimage-choice}
			d_Y(y,f(p(y)))\leq C
		\end{equation}
		and
		\begin{equation*}
			p(y)\in
			\bigcup_{|n-\ell|\leq K}S_n^X(o_X).
		\end{equation*}
	\end{lemma}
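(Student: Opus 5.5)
The plan is to take for $p(y)$ any point supplied by the density condition \eqref{eq:qi-density}, and then to show that its distance from $o_X$ is comparable to $2^\ell$ up to bounded multiplicative and additive errors. The shell index of $p(y)$ then differs from $\ell$ by a bounded amount. No clever choice of $p(y)$ is needed: any $x$ with $d_Y(y,f(x))\le C$ will do.

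Set $a=d_Y(o_Y,f(o_X))$, a fixed constant. For $y\in S_\ell^Y(o_Y)$ with $\ell\ge1$ we have $2^{\ell-1}\le d_Y(o_Y,y)<2^\ell$. Choose $x=p(y)$ with $d_Y(y,f(x))\le C$, which gives \eqref{eq:preimage-choice} at once. The triangle inequality then gives
\[
2^{\ell-1}-C-a\le d_Y(f(o_X),f(x))< 2^\ell+C+a .
\]
Combining this with the two sides of \eqref{eq:qi-distortion} applied to the pair $(o_X,x)$ yields
\[
L^{-1}\bigl(2^{\ell-1}-C-a-c\bigr)\le d_X(o_X,x)< L\bigl(2^\ell+C+a+c\bigr).
\]

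Next, choose $\ell_0$ so large that $C+a+c\le 2^{\ell_0-2}$. For $\ell\ge\ell_0$ this gives
\[
2^{\ell-2}/L\le d_X(o_X,x)<2L\cdot 2^\ell .
\]
Take $K$ to be an integer with $2^{K-2}\ge L$, for instance $K=\lceil\log_2 L\rceil+2$. The lower bound gives $d_X(o_X,x)\ge 2^{\ell-K}$, so $x\notin B_X(o_X,2^{\ell-K})$. The upper bound gives $d_X(o_X,x)<2^{\ell+K}$, so $x\in B_X(o_X,2^{\ell+K})$.

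It remains to pass from these distance bounds to shell membership. The shells $S_n^X(o_X)$ for $n\ge1$, together with $S_0^X$, partition $X$. Hence $B_X(o_X,2^{\ell+K})\setminus B_X(o_X,2^{\ell-K})$ is exactly the union of $S_n^X(o_X)$ over $\ell-K<n\le\ell+K$. Enlarging $\ell_0$ so that $\ell_0>K$ guarantees $\ell-K\ge1$, which avoids any ambiguity with $S_0$. Therefore $p(y)$ lies in $\bigcup_{|n-\ell|\le K}S_n^X(o_X)$.

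I do not expect a real obstacle. The only care needed is in fixing the constants: $K$ is chosen from $L$ alone, and $\ell_0$ is then chosen to absorb the additive errors $C,a,c$ and to keep $\ell-K\ge1$. Unboundedness of the spaces is not needed here, although it ensures the shells in question are eventually nonempty.
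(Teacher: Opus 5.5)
Your proof is correct and follows the paper's argument: pick any density preimage, use the triangle inequality and both quasi-isometry bounds to get $d_X(o_X,p(y))$ comparable to $2^\ell$ for large $\ell$, then read off a bounded shell shift. Your explicit constants, $K=\lceil\log_2 L\rceil+2$ and $\ell_0>K$ with $C+a+c\le 2^{\ell_0-2}$ (which also keeps $\ell-K\ge1$, away from $S_0$), make concrete what the paper leaves as ``sufficiently large''.
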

	
	\begin{proof}
		By \eqref{eq:qi-density}, for any $y\in Y$ we can choose $p(y)$ satisfying \eqref{eq:preimage-choice}. Put $y_0=f(o_X)$ and $A=d_Y(o_Y,y_0)$. If $y\in S_\ell^Y(o_Y)$, then $
		2^{\ell-1}\leq d_Y(o_Y,y)<2^\ell.$
		The triangle inequality gives
		\[
		d_Y(f(p(y)),y_0)
		\geq2^{\ell-1}-C-A
		\]
		and
		\[
		d_Y(f(p(y)),y_0)
		\leq2^\ell+C+A.
		\]
		Substituting these two estimates into the upper and lower quasi-isometry bounds, respectively, yields
		\[
		L^{-1}\bigl(2^{\ell-1}-C-A-c\bigr)
		\leq d_X(p(y),o_X)
		\leq L\bigl(2^\ell+C+A+c\bigr).
		\]
		Consequently, for all sufficiently large $\ell$, there exist constants $a,b>0$, independent of $\ell$ and $y$, such that
		\[
		a2^\ell\leq d_X(p(y),o_X)\leq b2^\ell.
		\]
		Choose $K$ so large that every number in $[a2^\ell,b2^\ell]$ lies in a dyadic shell whose index $n$ satisfies $|n-\ell|\leq K$. This proves the assertion.
	\end{proof}

	\begin{lemma}\label{lem:content-comparison}
		Fix $s\geq0$ and $\theta\in[0,1]$. Let $f:X\to Y$ be an $(L,c,C)$-quasi-isometry, and suppose that $Y$ satisfies the uniform coarse packing condition. Then there exist a constant $A\geq1$, an integer $K_0\geq1$, and $\delta_0>0$ such that
		\begin{equation}\label{eq:content-comparison}
			\Phi_{\delta,o_Y}^{s,\theta}(Y)
			\leq
			A\Phi_{2^{K_0}\delta,o_X}^{s,\theta}(X)
		\end{equation}
		whenever $0<\delta<\delta_0$.
	\end{lemma}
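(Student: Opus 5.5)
The plan is to transport near-optimal covers from $X$ to $Y$ through the coarse inverse $p$ of Lemma~\ref{lem:shell-localization}. Since $\Phi$ is a supremum over auxiliary parameters, I will match each parameter $\eta\in[0,\theta]$ on the $Y$-side with a slightly smaller parameter $\eta'\le\eta$ on the $X$-side. This keeps the supremum defining $\Phi^{s,\theta}_{2^{K_0}\delta,o_X}(X)$ available.

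\emph{Setup.} Let $K,\ell_0$ be as in Lemma~\ref{lem:shell-localization}, take $K_0\ge K$, and put $\delta'=2^{K_0}\delta$. Then $N_{\delta'}=N_\delta-K_0$. Choose $\delta_0$ so small that $N_\delta-K\ge\ell_0$ and $\delta'<1$ whenever $\delta<\delta_0$.

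\emph{Index sets.} If $\eta=0$, put $\eta'=0$. Every $\ell\in J_{\delta,0}$ then has all $n$ with $|n-\ell|\le K$ in $J_{\delta',0}$, since $\ell-K\ge N_\delta-K_0=N_{\delta'}$. If $\eta>0$, the upper endpoint $\lceil N_\delta/\eta\rceil+K$ must lie below $\lceil N_{\delta'}/\eta'\rceil$. So I define $\eta'$ by
\[
\eta'=\eta\,\frac{N_\delta-K_0}{N_\delta+K+1}\le\eta .
\]
A short computation should give $N_{\delta'}/\eta'\ge N_\delta/\eta+K$. Hence every $n$ with $|n-\ell|\le K$ and $\ell\in J_{\delta,\eta}$ lies in $J_{\delta',\eta'}$. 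I also need the radius caps to be comparable:
\[
2^{n}\delta'^{\eta'}\le 2^{K}2^{\ell}\,2^{-N_{\delta'}\eta'+1}
\quad\text{and}\quad
N_\delta\eta-N_{\delta'}\eta'=O(K_0+K).
\]
Together these give $2^n\delta'^{\eta'}\le 2^{K_1}\,2^\ell\delta^\eta$ for a constant $K_1$ independent of $\delta,\eta,n,\ell$. I expect this bookkeeping, the simultaneous control of index range and radius cap uniformly in $\eta$, to be the main obstacle. The explicit choice of $\eta'$ above is what I would try first.

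\emph{Transporting covers.} Fix $\ell\in J_{\delta,\eta}$ and, for each $n$ with $|n-\ell|\le K$, a near-optimal admissible cover $\{B_X(x_i,r_i)\}$ of $S^X_n(o_X)$ at $(\delta',\eta')$. If $y\in S_\ell^Y(o_Y)$, then $p(y)$ lies in some $B_X(x_i,r_i)$ with $|n-\ell|\le K$. By \eqref{eq:qi-distortion} and \eqref{eq:preimage-choice},
\[
d_Y(y,f(x_i))\le Lr_i+c+C\le \Lambda r_i,\qquad \Lambda=L+c+C,
\]
using $r_i\ge1$. Put $r_i'=\max\{1,2^{-K_1}r_i\}$, so that $1\le r_i'\le 2^\ell\delta^\eta$ is admissible. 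Apply Lemma~\ref{lem:internal-cover} to $G=S^Y_\ell(o_Y)\cap B_Y(f(x_i),\Lambda r_i)$, viewed inside a ball of radius $\Lambda 2^{K_1}r_i'$. This covers $G$ by at most $M_Y(\Lambda2^{K_1})$ balls of radius $r_i'$ whose centers lie in $G\subset S_\ell^Y$. Each new ball costs
\[
(r_i'/2^\ell)^s\le 2^{(K+K_1)s}\,(r_i/2^n)^s .
\]
This gives
\[
\Phi^{s,\eta}_{\ell,\delta,o_Y}(Y)\lesssim\sum_{|n-\ell|\le K}\Phi^{s,\eta'}_{n,\delta',o_X}(X).
\]
(Infinite right-hand sides are trivial, and empty $Y$-shells contribute zero.)

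\emph{Summation.} Sum over $\ell\in J_{\delta,\eta}$. Each $n$ appears for at most $2K+1$ values of $\ell$, and all such $n$ lie in $J_{\delta',\eta'}$. Hence $\Gamma^{s,\eta}_{\delta,o_Y}(Y)\le A\,\Gamma^{s,\eta'}_{\delta',o_X}(X)\le A\,\Phi^{s,\theta}_{\delta',o_X}(X)$, since $\eta'\le\eta\le\theta$. Taking the supremum over $\eta\in[0,\theta]$ yields \eqref{eq:content-comparison}.
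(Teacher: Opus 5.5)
Your proposal is correct and follows the paper's proof in all essentials: you shift the scale by a fixed power of $2$ and replace $\eta$ by some $\eta'\le\eta$ with $N_\delta(\eta-\eta')$ bounded uniformly in $\eta$, so that the shell ranges and radius caps match up to constants. You then transport near-optimal covers through the coarse inverse $p$, re-center them inside the target shell with the internal covering lemma, and sum with multiplicity $2K+1$. The differences are cosmetic. The paper takes $\eta'=(N-K)/(\lceil N/\eta\rceil+K)$ and splits the covering step into two cases: a single ball centered in the preimage class when $Dr_{n,i}\le u_\ell$, and an internal cover at radius $u_\ell$ otherwise. Your uniform shrinking $r_i'=\max\{1,2^{-K_1}r_i\}$ handles both cases at once, at the cost of always invoking the packing constant $M_Y(\Lambda2^{K_1})$.
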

	
	\begin{proof}
		Let $K$ and $\ell_0$ be supplied by Lemma~\ref{lem:shell-localization}. Enlarge $K$ and reduce $\delta_0$, if necessary, so that for $0<\delta<\delta_0$,
		$
		2^K\delta<1,
		\quad
		N_\delta>2K+\ell_0.
		$
		Set
		\[
		N=N_\delta,
		\quad
		\delta'=2^K\delta,
		\quad
		N_{\delta'}=N-K.
		\]
		
		Fix $\eta\in[0,\theta]$. If $\eta=0$, set $\eta'=0$. If $\eta>0$, define
		\begin{equation*}
			M_\eta=\left\lceil\frac{N}{\eta}\right\rceil+K,
			\qquad
			\eta'=\frac{N-K}{M_\eta}.
		\end{equation*}
		For $\eta>0$, we have $0<\eta'\leq\eta$, and
		\[
		J_{\delta',\eta'}
		=
		\{N-K,N-K+1,\ldots,M_\eta\}.
		\]
		Thus, whenever $\ell\in J_{\delta,\eta}$ and $|n-\ell|\leq K$, one has $n\in J_{\delta',\eta'}$.
		
		We also need a uniform estimate for the parameter displacement. From
		\[
		0\leq
		\eta\left\lceil\frac{N}{\eta}\right\rceil-N
		<\eta
		\]
		and $N/M_\eta\leq\eta$, it follows that
		\begin{align}
			N(\eta-\eta')
			&=
			\frac{N}{M_\eta}
			\bigl(\eta M_\eta-N+K\bigr)\notag\\
			&<
			\eta\bigl(\eta+\eta K+K\bigr)
			\leq2K+1.
			\label{eq:eta-shift}
		\end{align}
		For $\eta=0$, this estimate is immediate.
		
		For each $n\in J_{\delta',\eta'}$, choose an arbitrary $(\delta',\eta')$-admissible cover of $S_n^X(o_X)$,
		\begin{equation}\label{eq:source-cover}
			S_n^X(o_X)
			\subseteq
			\bigcup_{i\in I_n}B_X(x_{n,i},r_{n,i}),
			\qquad
			1\leq r_{n,i}\leq2^n(\delta')^{\eta'}.
		\end{equation}
		If no such finite cover exists for some relevant $n$, then the right-hand side of \eqref{eq:content-comparison} is infinite and there is nothing to prove. Hence we may work with finite admissible covers.
		
		Fix $\ell\in J_{\delta,\eta}$. For $|n-\ell|\leq K$ and $i\in I_n$, put
		\[
		T_{\ell,n,i}
		=
		\left\{y\in S_\ell^Y(o_Y):
		p(y)\in S_n^X(o_X)\cap B_X(x_{n,i},r_{n,i})
		\right\}.
		\]
		If  $T_{\ell,n,i}\neq\emptyset$, choose $y_{\ell,n,i}\in T_{\ell,n,i}$. For any $y\in T_{\ell,n,i}$, equations \eqref{eq:preimage-choice} and \eqref{eq:qi-distortion}, together with $r_{n,i}\geq1$, give
		\begin{align*}
			d_Y(y,y_{\ell,n,i})
			&\leq
			2C+d_Y(f(p(y)),f(p(y_{\ell,n,i})))\\
			&\leq2C+2Lr_{n,i}+c
			<Dr_{n,i},
		\end{align*}
		where $
		D=2C+2L+c+1.
		$
		
		Let $u_\ell=2^\ell\delta^\eta $
		be the largest radius allowed on the target shell. Since $\ell\geq N$, \eqref{eq:Ndelta} and $0\leq\eta\leq1$ imply
		\[
		u_\ell\geq2^N\delta^\eta\geq2^{N(1-\eta)}\geq1.
		\]
		We treat each $T_{\ell,n,i}$ in two cases.
		
		\medskip
		\noindent
		\textbf{Case 1: $Dr_{n,i}\leq u_\ell$.}
		A single ball $B_Y(y_{\ell,n,i},Dr_{n,i})$ is admissible. Since $|n-\ell|\leq K$,
		\begin{equation}\label{eq:case-one-cost}
			\Phi_{\ell,\delta,o_Y}^{s,\eta}(T_{\ell,n,i})
			\leq
			\left(\frac{Dr_{n,i}}{2^\ell}\right)^s
			\leq
			D^s2^{Ks}\left(\frac{r_{n,i}}{2^n}\right)^s.
		\end{equation}
		
		\medskip
		\noindent
		\textbf{Case 2: $Dr_{n,i}>u_\ell$.}
		By \eqref{eq:source-cover}, $|n-\ell|\leq K$, and \eqref{eq:eta-shift}, if $t=-\log_2\delta$, then $N-1<t\leq N$, and hence
		\begin{align*}
			\frac{Dr_{n,i}}{u_\ell}
			&\leq
			D2^{n-\ell}2^{K\eta'}\delta^{\eta'-\eta}\\
			&\leq
			D2^K2^K2^{N(\eta-\eta')}\\
			&\leq D2^{4K+1}
			< D_0,
		\end{align*}
		where one may take $D_0=D2^{4K+2}$. Therefore
		\[
		T_{\ell,n,i}
		\subset B_Y(y_{\ell,n,i},D_0u_\ell).
		\]
		By Lemma~\ref{lem:internal-cover}, the set $T_{\ell,n,i}$ can be covered by at most $M_Y(D_0)$ balls of radius $u_\ell$ whose centers belong to $T_{\ell,n,i}\subset S_\ell^Y(o_Y)$. Since $u_\ell<Dr_{n,i}$,
		\begin{equation}\label{eq:case-two-cost}
			\Phi_{\ell,\delta,o_Y}^{s,\eta}(T_{\ell,n,i})
			\leq
			M_Y(D_0)
			\left(\frac{u_\ell}{2^\ell}\right)^s
			\leq
			M_Y(D_0)(D2^K)^s
			\left(\frac{r_{n,i}}{2^n}\right)^s.
		\end{equation}
		
		By shell localization, every $y\in S_\ell^Y(o_Y)$ belongs to one of the sets $T_{\ell,n,i}$. Combining \eqref{eq:case-one-cost}, \eqref{eq:case-two-cost}, and finite subadditivity, we obtain a constant $C_0$, depending only on $s$, the quasi-isometry constants, $K$, and the coarse packing function of $Y$, such that
		\[
		\Phi_{\ell,\delta,o_Y}^{s,\eta}(Y)
		\leq
		C_0
		\sum_{|n-\ell|\leq K}
		\sum_{i\in I_n}
		\left(\frac{r_{n,i}}{2^n}\right)^s.
		\]
		For each fixed $\ell$, only finitely many source shells occur.
Taking covers within an arbitrarily small error of each infimum gives
		\begin{equation*}
			\Phi_{\ell,\delta,o_Y}^{s,\eta}(Y)
			\leq
			C_0
			\sum_{|n-\ell|\leq K}
			\Phi_{n,\delta',o_X}^{s,\eta'}(X).
		\end{equation*}
		For $\eta=0$, the following summation is justified by nonnegativity,
including when either series is infinite. Summing over $\ell\in J_{\delta,\eta}$, and observing that each source shell occurs at most $2K+1$ times, yields
		\begin{align*}
			\Gamma_{\delta,o_Y}^{s,\eta}(Y)
			&\leq
			C_0(2K+1)
			\Gamma_{\delta',o_X}^{s,\eta'}(X)\\
			&\leq
			C_0(2K+1)
			\Phi_{\delta',o_X}^{s,\theta}(X),
		\end{align*}
		where the last inequality uses $\eta'\leq\eta\leq\theta$. Taking the supremum over $0\leq\eta\leq\theta$ proves \eqref{eq:content-comparison}. One may take $K_0=K$ and $A=C_0(2K+1)$.
	\end{proof}

	The following lemma shows that every quasi-isometry has a coarse inverse.
	\begin{lemma}\label{lem:quasi-inverse}
		Every quasi-isometry $f:X\to Y$ admits a quasi-isometry $g:Y\to X$ such that $g\circ f$ is at uniformly bounded distance from $\operatorname{id}_X$ and $f\circ g$ is at uniformly bounded distance from $\operatorname{id}_Y$.
	\end{lemma}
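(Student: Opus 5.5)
The plan is the standard coarse-inverse construction, which uses only the two defining conditions \eqref{eq:qi-distortion} and \eqref{eq:qi-density}. For each $y\in Y$ I use \eqref{eq:qi-density} to choose (via the axiom of choice) a point $g(y)\in X$ with $d_Y(y,f(g(y)))\leq C$. With this choice, $f\circ g$ is within distance $C$ of $\operatorname{id}_Y$ by construction.

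The second step is to check that $g\circ f$ is close to $\operatorname{id}_X$. For $x\in X$, put $x'=g(f(x))$. Then $d_Y(f(x'),f(x))\leq C$. The lower bound in \eqref{eq:qi-distortion} gives
\[
L^{-1}d_X(x',x)-c\leq C,
\]
so $d_X(g(f(x)),x)\leq L(C+c)$ uniformly in $x$.

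The third step is the distortion estimate for $g$. For $y,y'\in Y$, the triangle inequality gives
\[
\bigl|d_Y(f(g(y)),f(g(y')))-d_Y(y,y')\bigr|\leq 2C.
\]
Applying \eqref{eq:qi-distortion} to the pair $g(y),g(y')$ yields
\[
d_X(g(y),g(y'))\leq L\bigl(d_Y(y,y')+2C+c\bigr)
\]
and
\[
d_X(g(y),g(y'))\geq L^{-1}\bigl(d_Y(y,y')-2C-c\bigr).
\]
Hence $g$ satisfies \eqref{eq:qi-distortion} with constants $L$ and $c'=L(2C+c)$; note that $L(2C+c)\geq(2C+c)/L$ since $L\geq1$. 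For the density condition \eqref{eq:qi-density}, given $x\in X$, take $y=f(x)$. By step two, $d_X(x,g(y))\leq L(C+c)$, so $g$ is an $(L,L(2C+c),L(C+c))$-quasi-isometry.

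None of these steps is a real obstacle. The only point needing care is to derive the constants explicitly and uniformly, so that later applications (symmetrizing Lemma~\ref{lem:content-comparison} via $g$) have bounded constants.
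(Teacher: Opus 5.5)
Your proposal is correct and follows the paper's proof essentially step for step. Both choose $g(y)$ using the density condition and derive the same two-sided distortion bounds for $g$ with additive error $L(2C+c)$. Both then get coarse surjectivity of $g$ from $d_X(g(f(x)),x)\le L(C+c)$. You simply write out the constants more explicitly than the paper does.
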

	
	\begin{proof}
Choose $g(y)$ with $d_Y(y,f(g(y)))\leq C$. The triangle inequality
and the distortion bounds for $f$ give
\[
L^{-1}d_Y(y,y')-L^{-1}(2C+c)
\leq d_X(g(y),g(y'))
\leq Ld_Y(y,y')+L(2C+c).
\]
Also $d_X(g(f(x)),x)\leq L(C+c)$, so $g$ is coarsely surjective.
These estimates give all the assertions.
\end{proof}

	\begin{proof}[Proof of Theorem~\ref{thm:mainG}]
		Fix $s\geq0$ and $\theta\in[0,1]$. By Lemma~\ref{lem:content-comparison}, there exist $A\geq1$ and $\beta>1$ such that
		\begin{equation}\label{eq:one-way}
			\Phi_{\delta,o_Y}^{s,\theta}(Y)
			\leq
			A\Phi_{\beta\delta,o_X}^{s,\theta}(X)
		\end{equation}
		for all sufficiently small $\delta$.
		
		Suppose that $s>\lDim_{\theta,o_X}X$. By the definition and monotonicity in $s$, there exists a sequence $\varepsilon_k\searrow0$ such that
		\[
		\liminf_{k\to\infty}
		\frac{\log\Phi_{\varepsilon_k,o_X}^{s,\theta}(X)}{-\log\varepsilon_k}
		\leq0.
		\]
		Put $\delta_k=\varepsilon_k/\beta$. From \eqref{eq:one-way},
		\[
		\frac{\log\Phi_{\delta_k,o_Y}^{s,\theta}(Y)}{-\log\delta_k}
		\leq
		\frac{\log A+
			\log\Phi_{\varepsilon_k,o_X}^{s,\theta}(X)}
		{-\log(\varepsilon_k/\beta)}.
		\]
		The denominator ratio
$(-\log\varepsilon_k)/(-\log(\varepsilon_k/\beta))$ tends to one,
and $\log A/(-\log(\varepsilon_k/\beta))$ tends to zero. Hence $ \lDim_{\theta,o_Y}Y\leq s.$
		Letting $s\searrow\lDim_{\theta,o_X}X$ gives
		\[
		\lDim_{\theta,o_Y}Y
		\leq
		\lDim_{\theta,o_X}X.
		\]
		
		Similarly, if $s>\uDim_{\theta,o_X}X$, then \eqref{eq:one-way} holds at all sufficiently small scales, and taking a $\limsup$ directly yields
		\[
		\uDim_{\theta,o_Y}Y
		\leq
		\uDim_{\theta,o_X}X.
		\]
		
		By Lemma~\ref{lem:quasi-inverse}, $f$ has a quasi-isometric coarse inverse $g:Y\to X$. Since $X$ also satisfies the uniform coarse packing condition, applying the preceding argument to $g$ gives both reverse inequalities. If a source dimension is infinite, the corresponding one-way inequality is automatic; applying the finite-source argument in the opposite direction excludes unequal finite and infinite values. Therefore \eqref{eq:main-conclusion} holds.
	\end{proof}

\begin{proof}[Proof of Theorem~\ref{thm:main}]
Euclidean metric subspaces satisfy uniform coarse packing, by the
packing argument of Lemma~\ref{lem:shell-covering}.
Choose $o_E\in E$ and $o_F\in F$ and apply Theorem~\ref{thm:mainG}
to these pointed spaces. Lemma~\ref{lem:euclidean-intrinsic} identifies
both intrinsic dimensions with their ambient Euclidean counterparts.
\end{proof}

\section{Uniform polynomial growth and probabilistic applications}
\label{sec:heisenberg}

We prove the polynomial-growth and spectrum results stated in Section~1,
then apply them to the Heisenberg group and random-walk ranges.
All subspaces carry their induced metrics, and all balls are open.

\subsection{Uniform growth and the continuity modulus}
Throughout this section assume \eqref{eq:Q-growth}. Bounded sets are
finite, and all subspaces carry their induced metrics. We use the
intrinsic contents from Section~1. If $o\notin E$, add $o$; the inclusion
$E\hookrightarrow E\cup\{o\}$ is a quasi-isometry, so this does not
change the dimensions once Lemma~\ref{lem:Q-packing} is established.

We first derive uniform covering and packing estimates from the volume growth assumption.
\begin{lemma}\label{lem:Q-packing}
Under \eqref{eq:Q-growth}, there is $A\geq1$ such that every
$G\subset B_X(x,R)$, where $R\geq r\geq1$, has a cover by at most
$A(R/r)^Q$ balls of radius $r$ with centers in $G$.
Moreover, $X$ and all its metric subspaces satisfy uniform coarse packing.
\end{lemma}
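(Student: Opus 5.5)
The plan is to repeat the maximal-separated-set argument of Lemma~\ref{lem:shell-covering}, replacing Lebesgue volume with counting measure and using the two-sided bound \eqref{eq:Q-growth}.

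\emph{Covering estimate.} Let $G\subset B_X(x,R)$ with $R\geq r\geq1$, and choose a maximal $r$-separated subset $Z\subset G$. Such a set is finite, since $G$ is bounded and bounded sets are finite by the upper bound in \eqref{eq:Q-growth}. As in Lemma~\ref{lem:internal-cover}, maximality shows that the balls $B_X(z,r)$, $z\in Z$, cover $G$, and their centers lie in $G$. To bound $\#Z$:
\begin{itemize}
\item The balls $B_X(z,r/2)$, $z\in Z$, are pairwise disjoint and all lie in $B_X(x,R+r/2)\subset B_X(x,2R)$.
\item If $r/2\geq1$, the lower growth bound gives $\#B_X(z,r/2)\geq v_-(r/2)^Q$. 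If $1\leq r<2$, use instead the trivial bound $\#B_X(z,r/2)\geq1\geq 2^{-Q}r^Q$, since $z$ belongs to its own ball.
\item In either case each small ball has at least $\min\{v_-,1\}2^{-Q}r^Q$ points.
\item Disjointness and the upper bound give a total of at most $\#B_X(x,2R)\leq v_+2^QR^Q$ points.
\end{itemize}
Hence $\#Z\leq A(R/r)^Q$ with $A=4^Qv_+/\min\{v_-,1\}$, and we may enlarge $A$ so that $A\geq1$.

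\emph{Uniform coarse packing.} Fix $\Lambda\geq1$, $x\in X$ and $r\geq1$. Any $r$-separated subset of $B_X(x,\Lambda r)$ is handled by the same disjoint-ball count with $R=\Lambda r$. This gives cardinality at most $A\Lambda^Q$, so $M_X(\Lambda)\leq A\Lambda^Q<\infty$.

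\emph{Subspaces.} For $E\subset X$ with the induced metric, an $r$-separated subset of $B_E(x,\Lambda r)=E\cap B_X(x,\Lambda r)$ is also $r$-separated in $B_X(x,\Lambda r)$. Its center $x$ lies in $E\subset X$, so the bound $M_E(\Lambda)\leq M_X(\Lambda)$ follows at once.

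The only point needing care is the range $1\leq r<2$, where the lower growth bound cannot be applied at radius $r/2<1$. The singleton bound above handles it. There is no other obstacle.
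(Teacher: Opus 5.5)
Your proposal is correct and follows essentially the same route as the paper: you take a maximal $r$-separated subset, count disjoint small balls using the two-sided growth bound with the singleton bound for small $r$, and note that separation is inherited by subspaces. The paper uses radius $r/3$ with threshold $r\geq3$ where you use $r/2$ with threshold $r\geq2$; since all balls are open, this difference is immaterial, and your explicit constant $A=4^Qv_+/\min\{v_-,1\}$ is a valid choice.
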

\begin{proof}
Let $Z\subset B_X(x,R)$ be $r$-separated. The balls $B_X(z,r/3)$,
$z\in Z$, are pairwise disjoint and contained in $B_X(x,R+r/3)$.
If $r\geq3$, their cardinalities are at least $v_-(r/3)^Q$.
If $1\leq r<3$, each contains its center and hence has cardinality
at least $3^{-Q}r^Q$. In both cases \eqref{eq:Q-growth} gives
$\#Z\leq A(R/r)^Q$ with a uniform constant $A$.
A maximal $r$-separated subset of $G$ gives the required internal cover:
any point not in its open $r$-balls could be adjoined to it.
The packing estimate passes to subspaces, since separation and distances
are inherited from $X$.
\end{proof}

The volume assumption does not require every dyadic shell to be nonempty.
The following observation will allow us to handle gaps between occupied
shells without adding a geodesicity assumption.
\begin{lemma}\label{lem:Q-rich-shells}
There are $c_*>0$ and an integer $h\geq1$ such that
\begin{equation*}
 I=\{n\geq1:\ \#S_n\geq c_*2^{Qn}\}
\end{equation*}
meets every sufficiently late interval of $h$ consecutive integers.
\end{lemma}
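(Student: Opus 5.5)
The idea is to compare the cardinality of the ball $B_X(o,2^n)$ with that of a ball about $o$ a fixed number of dyadic steps smaller. The missing mass must lie in a bounded number of consecutive shells, and one of them must then be rich.

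Fix an integer $k\geq1$, to be chosen, and set $h=k$. For $n\geq k$, the set $B_X(o,2^n)\setminus B_X(o,2^{n-k})$ is the disjoint union of the shells $S_j$ with $n-k+1\leq j\leq n$. This holds because the shells are defined by $2^{j-1}\leq d(o,x)<2^j$. By \eqref{eq:Q-growth},
\[
\#\bigcup_{j=n-k+1}^{n}S_j\;\geq\; v_-2^{Qn}-v_+2^{Q(n-k)}
=2^{Qn}\bigl(v_- - v_+2^{-Qk}\bigr).
\]
Choose $k$ so large that $v_+2^{-Qk}\leq v_-/2$. The union of these $k$ shells then has at least $(v_-/2)2^{Qn}$ points. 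By pigeonhole, some $j$ in the window satisfies
\[
\#S_j\geq\frac{v_-}{2k}2^{Qn}\geq\frac{v_-}{2k}2^{Qj},
\]
using $j\leq n$. So $c_*=v_-/(2k)$ works for this $j$, and $j\in I$.

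Now take any interval $[N,N+h-1]$ of $h=k$ consecutive integers with $N\geq1$. Apply the above with $n=N+k-1$; this needs $n\geq k$, i.e.\ $N\geq1$. The resulting window is exactly $\{N,\dots,N+k-1\}$, so $I$ meets it. Hence $I$ meets every interval of $h$ consecutive integers in $\N$, which is stronger than required.

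The only delicate point is the bookkeeping of open balls and shell boundaries, namely that the annulus decomposes exactly into the $k$ shells. It does: a point with $2^{n-k}\leq d(o,x)<2^n$ has $d(o,x)\in[2^{j-1},2^j)$ for exactly one $j\in\{n-k+1,\dots,n\}$. No geodesic or connectivity assumption is needed, and unboundedness enters only through \eqref{eq:Q-growth} holding at all radii $R\geq1$.
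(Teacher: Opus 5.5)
Your proof is correct and is essentially the paper's argument: choose $h$ with $v_+2^{-hQ}\leq v_-/2$, split $B_X(o,2^k)\setminus B_X(o,2^{k-h})$ into the $h$ shells $S_{k-h+1},\dots,S_k$, and apply pigeonhole to get $c_*=v_-/(2h)$. Your extra bookkeeping of the shell boundaries, and your observation that every window in $\N$ is met, are both consistent with the paper's proof, which works for all $k\geq h$.
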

\begin{proof}
Choose $h$ so that $v_+2^{-hQ}\leq v_-/2$. For $k\geq h$,
\[
 \sum_{j=k-h+1}^{k}\#S_j
 =\#B_X(o,2^k)-\#B_X(o,2^{k-h})
 \geq (v_-/2)2^{Qk}.
\]
Some $j$ in this interval therefore satisfies
$\#S_j\geq(v_-/(2h))2^{Qk}\geq(v_-/(2h))2^{Qj}$.
Take $c_*=v_-/(2h)$.
\end{proof}

We identify all discrete intermediate dimensions of the ambient space with its volume growth exponent and establish a continuity bound for the spectra of its subspaces.

\begin{proof}[Proof of Theorem~\ref{thm:Q-growth}]
Unit balls centered at the points of $E\cap S_n$ give, for $s>Q$,
\[
 \Phi_{\delta,o}^{s,\theta}(E)
 \leq v_+\sum_{n\geq N_\delta}2^{-n(s-Q)}
 \lesssim_s\delta^{s-Q}.
\]
This proves the upper bound $Q$ for all subspaces, uniformly in $\theta$.
For the lower bound for $X$, take $n\in I$ from
Lemma~\ref{lem:Q-rich-shells}. Every cover admissible at $\eta=0$ satisfies
\[
 c_*2^{Qn}\leq\#S_n\leq\sum_i\#B_X(x_i,r_i)
 \leq v_+\sum_i r_i^Q,\qquad 1\leq r_i\leq2^n.
\]
Consequently, for $0\leq s\leq Q$,
\[
 \sum_i(r_i/2^n)^s\geq\sum_i(r_i/2^n)^Q\geq c_*/v_+.
\]
There are infinitely many such shells beyond every $N_\delta$, so
$\Gamma_{\delta,o}^{s,0}(X)=+\infty$. Since the regularized content
includes $\eta=0$, this proves \eqref{eq:Q-full-spectrum}.

For \eqref{eq:Q-modulus}, repeat the proof of
Lemma~\ref{lem:parameter-comparison} using Lemma~\ref{lem:Q-packing}
and replacing $d$ by $Q$. The only geometric input is the bound
$A(r/u)^Q$ for the number of internal radius-$u$ balls needed to cover
a radius-$r$ ball. With $0\leq s<Q$ and $t=Q-(\theta/\phi)(Q-s)$, the identity
$\theta(Q-s)+\phi(t-Q)=0$ gives the same cost comparison.
The supremum over the auxiliary parameter and the passage to critical
exponents are exactly those in Proposition~\ref{thm:continuity}.
If $D_\theta E=Q$, monotonicity and the range bound give the assertion directly.
\end{proof}

\subsection{Sparse clusters and sharp spectra}
For subspaces of $X$, define the lower and upper mass dimensions using
$\#(E\cap B_X(o,2^n))$ in the formulae of Section~\ref{sec:endpoints-intro}.
We now prove the general cluster formula, of which Example~\ref{continuousspectrum} is a special case.

\begin{proof}[Proof of Theorem~\ref{thm:Q-clusters}]
Write $N=N_\delta$ and take $\delta$ small enough that $N\geq n_0$.
For $0<s<Q$, $0\leq\eta\leq\theta$, and $n\in I\cap J_{\delta,\eta}$,
put $u=2^n\delta^\eta$ and $R_n=C_0 2^{\beta n}$.
If $u\geq R_n$, Lemma~\ref{lem:Q-packing} covers $A_n$ by a bounded
number of balls of radius $R_n$ with centers in $A_n$.
If $u<R_n$, it gives at most $A(R_n/u)^Q$ internal balls of radius $u$.
Hence
\[
 \Phi_{n,\delta,o}^{s,\eta}(E_\beta)\lesssim
 \begin{cases}
 2^{-\alpha sn},&u\geq R_n,\\
 2^{-Q\alpha n}\delta^{-\eta(Q-s)},&u<R_n.
 \end{cases}
\]
Empty shells contribute zero. Bounding each of these two sums by the
corresponding full geometric series, uniformly in $\eta$, gives
\begin{equation}\label{eq:Q-cluster-upper}
 \Phi_{\delta,o}^{s,\theta}(E_\beta)
 \lesssim_{s,\beta}\delta^{\alpha s}
             +\delta^{Q\alpha-\theta(Q-s)}.
\end{equation}
For $0\leq\theta\leq\alpha$, both exponents are positive for every
$0<s<Q$. For $\theta>\alpha$, they are positive when
$Q(1-\alpha/\theta)<s<Q$. This proves the upper bound in
\eqref{eq:Q-cluster-spectrum}.

For the lower bound, let $\theta>\alpha$ and
$0\leq s<Q(1-\alpha/\theta)$. Choose $n\in I\cap[N,N+h]$ and define
\begin{equation}\label{eq:Q-gap-parameter}
 \eta_N=\min\{\theta,N/(N+h)\},\qquad u=2^n\delta^{\eta_N}.
\end{equation}
Since $\eta_N\leq N/(N+h)$, we have
$N/\eta_N\geq N+h\geq n$, hence $n\in J_{\delta,\eta_N}$.
Also $0<\eta_N\leq\theta$ and $\eta_N\to\theta$. An arbitrary admissible cover of $A_n$ satisfies
\[
 c_0 2^{Q\beta n}\leq\#A_n
 \leq\sum_i\#B_X(x_i,r_i)\leq v_+\sum_i r_i^Q,
 \qquad1\leq r_i\leq u.
\]
Since $s<Q$,
\[
 \sum_i(r_i/2^n)^s
 \geq2^{-ns}u^{s-Q}\sum_i r_i^Q
 \gtrsim2^{-Q\alpha n}\delta^{-\eta_N(Q-s)}
 \asymp\delta^{Q\alpha-\eta_N(Q-s)}.
\]
The last constants are uniform because $0\leq n-N\leq h$.
Taking infima, using the supremum in the regularized content, and
normalizing logarithms yields
\[
 \liminf_{\delta\downarrow0}
 \frac{\log\Phi_{\delta,o}^{s,\theta}(E_\beta)}{-\log\delta}
 \geq\theta(Q-s)-Q\alpha>0.
\]
This proves the required lower bound. Notice that
\eqref{eq:Q-gap-parameter} also handles $\theta=1$, when the shell
$S_N$ itself might be empty.

For all sufficiently large integers $k$, bounded gaps supply an
$n\in I\cap[k-h,k]$. Therefore
\[
 c2^{Q\beta(k-h)}\leq\#(E_\beta\cap B_X(o,2^k))
 \leq1+C_1\sum_{j=n_0}^{k}2^{Q\beta j}\leq C2^{Q\beta k}.
\]
Taking logarithms proves both mass dimension claims. Finally,
\eqref{eq:Q-sharpness} follows by direct calculation.
\end{proof}

\begin{proof}[Proof of Corollary~\ref{cor:Q-existence}]
Take $I$ from Lemma~\ref{lem:Q-rich-shells} and put $L_n=2^{\beta n}$.
Lemma~\ref{lem:Q-packing} covers $S_n\subset B_X(o,2^n)$ by at most
$A(2^n/L_n)^Q$ balls of radius $L_n$ with centers in $S_n$.
Since $\#S_n\geq c_*2^{Qn}$, one of these balls, centered at $z_n$, obeys
\[
 \#(S_n\cap B_X(z_n,L_n))\geq(c_*/A)L_n^Q.
\]
Set $A_n=S_n\cap B_X(z_n,L_n)$. Its cardinality is at most $v_+L_n^Q$
by \eqref{eq:Q-growth}, so \eqref{eq:Q-cluster-assumptions} holds.
Apply Theorem~\ref{thm:Q-clusters}.
\end{proof}

\begin{remark}\label{rem:Q-groups}
If an infinite group carries a left-invariant metric with
$\#B(e,R)\asymp R^Q$ uniformly for $R\geq1$, then left translation
implies \eqref{eq:Q-growth}. Thus all the preceding results apply to it.
In particular, they apply to any infinite finitely generated group whose
word balls satisfy this two-sided estimate. Theorem~\ref{thm:mainG}
also transfers the dimension formulae to quasi-isometric spaces satisfying
uniform coarse packing.
\end{remark}

\subsection{Independent thinning and a nonconstant random spectrum}
\begin{proof}[Proof of Theorem~\ref{thm:Q-thinning}]
Put $m=Q\beta-\rho>0$. We first establish two uniform estimates on an event of probability one:
for all sufficiently large $n\in I$,
\begin{equation}\label{eq:Q-random-count}
 c2^{mn}\leq\#A_n^\omega\leq C2^{mn},
\end{equation}
and, for every $x\in S_n$ and $1\leq r\leq2^n$,
\begin{equation}\label{eq:Q-random-local}
 \#(A_n^\omega\cap B_X(x,r))\leq C(n+p_nr^Q).
\end{equation}
The constants may be chosen deterministic; the first valid index may
depend on $\omega$.

Indeed, $\#A_n^\omega$ is a sum of independent Bernoulli variables with
mean $\mu_n=p_n\#A_n\asymp2^{mn}$. For a Bernoulli sum $Z$ of mean $\mu$, independence and $1+v\leq e^v$
give $\mathbb E e^{tZ}\leq\exp(\mu(e^t-1))$ for all real $t$.
Exponential Markov inequalities with $t=\log(3/2)$ for the upper tail
and $t=-\log2$ for the lower tail yield
$\mathbb P(|Z-\mu|>\mu/2)\leq2e^{-\mu/12}$.
Applying this to $Z=\#A_n^\omega$ gives
\[
 \mathbb P\{|\#A_n^\omega-\mu_n|>\mu_n/2\}
 \leq2\exp(-c\mu_n).
\]
These probabilities are summable because $m>0$, proving
\eqref{eq:Q-random-count} by the Borel--Cantelli lemma.

To prove \eqref{eq:Q-random-local}, first consider $r=2^j$,
$0\leq j\leq n$, and write
$Y_{n,x,j}=\#(A_n^\omega\cap B_X(x,2^j))$.
Its mean is at most $v_+p_n2^{jQ}$, and independence yields
\[
 \mathbb E\exp(Y_{n,x,j})
 \leq\exp((\mathrm e-1)\mathbb E Y_{n,x,j}).
\]
Consequently, if $K\geq(\mathrm e-1)v_+$, exponential Markov's inequality
implies
\[
 \mathbb P\{Y_{n,x,j}>K(p_n2^{jQ}+n)\}\leq\exp(-Kn).
\]
The growth assumption makes $S_n$ finite. There are at most
$v_+2^{nQ}(n+1)$ pairs $(x,j)$ with $x\in S_n$.
Choose also $K>Q\log2+2$. The union bound makes the probabilities of
any failure in shell $n$ summable. Borel--Cantelli, followed by rounding
an arbitrary $r$ up to the next dyadic radius (at most $2r$), proves
\eqref{eq:Q-random-local}.

Fix a sample for which these estimates hold. The internal covering
estimates used in \eqref{eq:Q-cluster-upper} apply equally to each
$A_n^\omega\subset A_n$, with centers chosen in $A_n^\omega$.
They give the upper bound $F_{\beta,Q}(\theta)$, including the zero
endpoint. Independently, unit-ball covers and
\eqref{eq:Q-random-count} imply, for $s>m$,
\[
 \Phi_{\delta,o}^{s,\theta}(E_{\beta,\rho}^\omega)
 \leq C\sum_{n\geq N_\delta}2^{-n(s-m)}
 \lesssim_s\delta^{s-m}
\]
once $N_\delta$ exceeds the random exceptional indices. This proves
\[
 \uDim_\theta E_{\beta,\rho}^\omega\leq\min\{m,F_{\beta,Q}(\theta)\}.
\]

For the lower bound, let $\alpha=1-\beta$, $\theta>\alpha$, and
$0\leq s<\min\{m,Q(1-\alpha/\theta)\}$. Choose $N,n,\eta_N,u$ as in
\eqref{eq:Q-gap-parameter}, beyond the exceptional indices.
For any admissible cover of $A_n^\omega$, its centers lie in $S_n$ and
$1\leq r_i\leq u\leq2^n$. Hence \eqref{eq:Q-random-local} gives
\[
 \#A_n^\omega\leq C\sum_i(n+p_nr_i^Q)
 \leq C(n+p_nu^{Q-s})\sum_i r_i^s,
\]
where we used $r_i\geq1$ and $s<Q$. By \eqref{eq:Q-random-count},
\begin{align*}
 \sum_i(r_i/2^n)^s
 &\gtrsim\frac{2^{n(m-s)}}{n+p_nu^{Q-s}}\\
 &\gtrsim\min\left\{\frac{2^{n(m-s)}}{n},
             2^{-Q\alpha n}\delta^{-\eta_N(Q-s)}\right\}.
\end{align*}
Since $n=N+O(1)$, $\log n/N\to0$, and $\eta_N\to\theta$, taking infima and normalized
logarithms gives
\[
 \liminf_{\delta\downarrow0}
 \frac{\log\Phi_{\delta,o}^{s,\theta}(E_{\beta,\rho}^\omega)}{-\log\delta}
 \geq\min\{m-s,\theta(Q-s)-Q\alpha\}>0.
\]
This proves \eqref{eq:Q-random-spectrum}.
All these deterministic arguments take place on the single event supplied
by \eqref{eq:Q-random-count}--\eqref{eq:Q-random-local}; thus the conclusion
holds simultaneously for all $\theta$, without an uncountable intersection
of probability-one events. The bounded gaps in $I$, together with
\eqref{eq:Q-random-count}, give both mass dimensions just as in the proof
of Theorem~\ref{thm:Q-clusters}. Solving
$Q(1-\alpha/\theta)=Q\beta-\rho$ gives
\eqref{eq:Q-random-transition} and the stated branches.
\end{proof}

\begin{remark}
When $\rho=0$, every point is retained and
Theorem~\ref{thm:Q-thinning} recovers Theorem~\ref{thm:Q-clusters}.
For $0<\rho<Q\beta$, independent thinning leaves the onset
$\theta=1-\beta$ and the initial increasing branch unchanged, but introduces
a plateau before $\theta=1$. Thus the endpoint records the number of
retained points, whereas the increasing branch also records their
concentration within the original clusters.
\end{remark}

\subsection{An application to the Heisenberg group}
We use the word metric on the vertex set of the Cayley graph, rather than
the Euclidean coordinate metric on $\Z^3$.
Let $H=\mathbb H(\Z)$ be the discrete Heisenberg group. In coordinates,
\[
H=\Z^3,\qquad
(x,y,z)(x',y',z')=(x+x',y+y',z+z'+xy').
\]
Write $e=(0,0,0)$, $a=(1,0,0)$, $b=(0,1,0)$, and
$c=[a,b]=aba^{-1}b^{-1}=(0,0,1)$. Let $d_H$ be the word metric for
$\{a,a^{-1},b,b^{-1}\}$, and put $|g|=d_H(e,g)$. In particular,
\begin{equation}\label{eq:heisenberg-commutator}
[a^p,b^q]=c^{pq}\qquad(p,q\in\Z).
\end{equation}

\begin{lemma}\label{lem:heisenberg-growth}
There are constants $v_-,v_+,A>0$ such that
\begin{equation*}
v_-R^4\leq\#B_H(g,R)\leq v_+R^4
\qquad(g\in H,\ R\geq1).
\end{equation*}
Moreover, for $R\geq r\geq1$, every subset $G\subset B_H(g,R)$ can be
covered by at most $A(R/r)^4$ balls of radius $r$ with centers in $G$.
In particular, $H$ and all its metric subspaces satisfy uniform coarse
packing.
\end{lemma}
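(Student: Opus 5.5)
By left-invariance of $d_H$, left translation by $g$ is an isometry of $H$ carrying $B_H(e,R)$ onto $B_H(g,R)$. Hence $\#B_H(g,R)=\#B_H(e,R)$, and the first assertion reduces to the two-sided bound $\#B_H(e,R)\asymp R^4$ for $R\geq1$. The covering and packing assertions then follow verbatim from Lemma~\ref{lem:Q-packing} with $Q=4$. That lemma uses only \eqref{eq:Q-growth}, and it already states that all metric subspaces inherit uniform coarse packing. So the whole task is the growth estimate at the identity, which I will prove by comparing word length with the coordinates.

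\emph{Upper bound.} I will show that $|g|\leq R$ forces $|x|,|y|\leq R$ and $|z|\leq R^2$. The first two bounds hold because each generator changes $x$ or $y$ by $\pm1$. For $z$, multiplying on the right by $b^{\pm1}$ changes $z$ by $\pm x$, while multiplying by $a^{\pm1}$ leaves $z$ unchanged. Along a word of length at most $R$, the $x$-coordinate is at most $R$ in absolute value, so $|z|\leq R\cdot R$. This gives $\#B_H(e,R)\leq(2R+1)^2(2R^2+1)\leq v_+R^4$ for $R\geq1$.

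\emph{Lower bound.} This is the main step: I must show that every $(x,y,z)$ with $|x|,|y|\leq\varepsilon R$ and $|z|\leq\varepsilon R^2$ has word length at most $R$, for a fixed small $\varepsilon>0$. Write $(x,y,z)=a^xb^y c^{z'}$ with $z'=z-xy$, so that $|z'|\leq 2\varepsilon R^2$ when $\varepsilon\leq1$. It then suffices to show $|c^k|\leq C_1\sqrt{|k|}+C_1$. Using \eqref{eq:heisenberg-commutator}, put $p=\lfloor\sqrt{|k|}\rfloor$ and write $k=\pm(p^2+q)$ with $0\leq q\leq 2p$. Then
\[
c^k=[a^p,b^{\pm p}]\,c^{\pm q},
\]
and $c^{\pm q}$ is expressible as $[a^{2},b^{\pm\lfloor q/2\rfloor}]$ times at most one further commutator $[a,b^{\pm1}]$. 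The total length is $O(p)=O(\sqrt{|k|})$. Hence
\[
|(x,y,z)|\leq|x|+|y|+C_1\sqrt{2\varepsilon}\,R+C_1\leq R
\]
for $\varepsilon$ small and $R$ large. For bounded $R\geq1$ the lower bound is trivial, since $\#B_H(e,R)\geq1$. The lower bound therefore gives $\#B_H(e,R)\gtrsim(\varepsilon R)^2\cdot\varepsilon R^2$, which is at least $v_-R^4$ after adjusting the constant for small $R$.

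With both bounds in hand, \eqref{eq:Q-growth} holds with $Q=4$, and Lemma~\ref{lem:Q-packing} yields the constant $A$ and the uniform coarse packing of $H$ and all its subspaces.
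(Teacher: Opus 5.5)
Your proof is correct and follows essentially the same route as the paper: you reduce to $e$ by left translation, bound the coordinates by $|x|,|y|\le R$ and $|z|\le R^2$ for the upper estimate, write $c^k$ as a product of commutators of length $O(\sqrt{|k|})$ for the lower estimate, and then apply Lemma~\ref{lem:Q-packing} with $Q=4$; your splitting $k=\pm(p^2+q)$ and normal form $a^xb^yc^{z-xy}$ differ only cosmetically from the paper's $k=mq+r$ and $b^ya^xc^z$. Since the paper's balls are open, make your final word-length bound strict (for instance at most $R/2$), which your choice of small $\varepsilon$ and large $R$ already allows.
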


\begin{proof}
Left translations are isometries, so it suffices first to count balls
centered at $e$. Along a word of length at most $R$, the coordinates obey
$|x|,|y|\leq R$ and $|z|\leq R^2$: right multiplication by $a^{\pm1}$
changes $x$ by $\pm1$, while right multiplication by $b^{\pm1}$ changes
$y$ by $\pm1$ and $z$ by $\pm x$. This proves the upper bound.

For the lower bound, fix an integer $m\geq1$. If $0\leq k\leq m^2$,
write $k=mq+r$, where $0\leq q\leq m$ and $0\leq r<m$. By
\eqref{eq:heisenberg-commutator},
\[
c^k=[a^m,b^q][a^r,b],\qquad |c^k|\leq6m+2.
\]
For negative $k$, take the inverse of the word representing $c^{-k}$;
this preserves its length. Since
$(x,y,z)=b^ya^xc^z$, every integer point with
\[
|x|,|y|\leq m,\qquad |z|\leq m^2
\]
has word length at most $8m+2$. Taking $m=\lfloor R/16\rfloor$ for
$R\geq32$ gives at least a constant times $R^4$ points in the open ball
$B_H(e,R)$. Decreasing $v_-$ handles $1\leq R<32$.

The internal covering and coarse packing assertions now follow from
Lemma~\ref{lem:Q-packing}, applied with $Q=4$.
\end{proof}

\begin{remark}\label{rem:heisenberg-non-euclidean}
The Cayley graph $(H,d_H)$ is not quasi-isometric to $\R^m$ for any
$m$. Indeed, Pansu's asymptotic-cone theorem \cite{Pansu83} identifies its
asymptotic cone with the real Heisenberg group equipped with a
Carnot--Carath\'eodory Finsler metric; see also
\cite[Section~1.2, Theorem 2]{DM14}. The cone is homeomorphic to
$\R^3$, so has topological dimension $3$. Its dilations
$(x,y,z)\mapsto(rx,ry,r^2z)$ multiply distances by $r$ and Haar volume
by $r^4$. Left invariance then gives Ahlfors $4$-regularity, and hence
Hausdorff dimension $4$. On rescaling a quasi-isometry by factors tending to infinity, its
additive error and coarse-surjectivity error tend to zero. It therefore
induces a bi-Lipschitz equivalence of asymptotic cones. For $\R^m$,
this cone is $\R^m$ itself, forcing both $m=3$ and
$m=4$. Thus the example is non-Euclidean even at the level of
quasi-isometry. The elementary estimates in Lemma~\ref{lem:heisenberg-growth}
are sufficient for all dimension computations below.
\end{remark}

\begin{corollary}
\label{thm:heisenberg-spectrum}
For every $\theta\in[0,1]$,
\[
 \lDim_\theta H=\uDim_\theta H=4.
\]
Every unbounded $E\subset H$ satisfies
\begin{equation}\label{eq:heisenberg-modulus}
 0\leq D_\phi E-D_\theta E
 \leq(1-\theta/\phi)(4-D_\theta E),\qquad0<\theta<\phi\leq1.
\end{equation}
For $0<\beta<1$, choose $n_0$ sufficiently large and put
\begin{equation*}
 g_n=a^{3\cdot2^{n-2}},\quad L_n=\lfloor2^{\beta n}\rfloor,\quad
 E_n=B_H(g_n,L_n),\qquad
 E_\beta^H=\{e\}\cup\bigcup_{n\geq n_0}E_n.
\end{equation*}
Then
\begin{equation*}
 \lDim_\theta E_\beta^H=\uDim_\theta E_\beta^H
 =F_\beta(\theta):=F_{\beta,4}(\theta),
\end{equation*}
both mass dimensions equal $4\beta$, and for $1-\beta\leq\theta<\phi\leq1$,
\begin{equation*}
 F_\beta(\phi)-F_\beta(\theta)
 =(1-\theta/\phi)(4-F_\beta(\theta)).
\end{equation*}
If each point of $E_n$ is independently retained with probability
$2^{-\rho n}$, where $0\leq\rho<4\beta$, the resulting set
$E_{\beta,\rho}^{H,\omega}$ has, almost surely and simultaneously for all
$\theta$, lower and upper spectra $G_{\beta,\rho,4}(\theta)$ and both
mass dimensions equal to $4\beta-\rho$.
\end{corollary}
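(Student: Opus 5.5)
The plan is to deduce everything from the general results of Section~\ref{sec:heisenberg}, using Lemma~\ref{lem:heisenberg-growth} as the only Heisenberg-specific input. That lemma gives \eqref{eq:Q-growth} with $Q=4$ for $(H,d_H)$ with base point $e$. Theorem~\ref{thm:Q-growth} then yields $\lDim_\theta H=\uDim_\theta H=4$ for every $\theta$, and also the modulus \eqref{eq:heisenberg-modulus} for every unbounded $E\subset H$. Here we first adjoin $e$ if necessary, which is harmless by the quasi-isometry remark at the start of the section together with Theorem~\ref{thm:mainG}.

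For the cluster set, I would verify the hypotheses \eqref{eq:Q-cluster-assumptions} of Theorem~\ref{thm:Q-clusters}, taking $I$ to be all sufficiently large integers, $z_n=g_n$ and $A_n=E_n$. The step needing care is showing that $E_n\subset S_n$, which requires $|g_n|$ to be comparable to $3\cdot2^{n-2}$.

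\begin{enumerate}
\item \emph{Upper bound on $|g_n|$.} Clearly $|a^k|\le k$.
\item \emph{Lower bound on $|g_n|$.} The projection $(x,y,z)\mapsto x$ is $1$-Lipschitz from $d_H$ to $\Z$, since each generator changes $x$ by at most $1$. Hence $|a^k|\ge k$, so $|g_n|=3\cdot2^{n-2}$ exactly.
\item \emph{Containment in $S_n$.} For $h\in E_n$ the triangle inequality gives $\bigl||h|-3\cdot2^{n-2}\bigr|<L_n\le2^{\beta n}$. For $n\ge n_0$ large, $2^{\beta n}<2^{n-2}$, so $2^{n-1}\le|h|<2^n$ and $E_n\subset S_n\cap B_H(g_n,2^{\beta n})$. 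Thus $C_0=1$ works.
\item \emph{Cardinality.} By Lemma~\ref{lem:heisenberg-growth}, $\#E_n=\#B_H(g_n,L_n)\asymp L_n^4\asymp2^{4\beta n}$, using $L_n\ge1$ and $L_n\ge2^{\beta n}/2$ for large $n$.
\end{enumerate}

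Theorem~\ref{thm:Q-clusters} then gives the spectrum $F_{\beta,4}$, both mass dimensions equal to $4\beta$, and the sharpness identity, the latter being \eqref{eq:Q-sharpness} with $Q=4$.

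Finally, the thinned set is exactly the construction preceding Theorem~\ref{thm:Q-thinning}, with these clusters $A_n=E_n$ and $p_n=2^{-\rho n}$. That theorem gives, on a single almost-sure event and simultaneously for all $\theta$, the spectrum $G_{\beta,\rho,4}$ and mass dimensions $4\beta-\rho$.

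The only genuine obstacle is the shell-localization of the clusters, i.e.\ steps 2 and 3: one must control the word length of $a^k$ exactly enough to place the whole ball $E_n$ inside $S_n$. The Lipschitz projection onto the $x$-coordinate handles this. Everything else is direct substitution into Theorems~\ref{thm:Q-growth}, \ref{thm:Q-clusters} and~\ref{thm:Q-thinning}.
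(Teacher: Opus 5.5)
Your proposal is correct and matches the paper's proof essentially step for step. Lemma~\ref{lem:heisenberg-growth} feeds Theorem~\ref{thm:Q-growth}, the $x$-coordinate homomorphism gives $|g_n|=3\cdot2^{n-2}$, the triangle inequality places $E_n$ inside $S_n$ with $\#E_n\asymp2^{4\beta n}$, and Theorems~\ref{thm:Q-clusters} and~\ref{thm:Q-thinning}, with $I$ all sufficiently large integers, give the rest; the only difference is cosmetic, as you impose $2^{\beta n}<2^{n-2}$ and $C_0=1$ where the paper requires $2\le L_n\le2^{n-4}$.
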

\begin{proof}
Lemma~\ref{lem:heisenberg-growth} verifies \eqref{eq:Q-growth} with $Q=4$.
The full-group assertion and \eqref{eq:heisenberg-modulus} therefore follow
from Theorem~\ref{thm:Q-growth}.
The homomorphism $(x,y,z)\mapsto x$ shows that $|a^k|\geq|k|$, while
the word $a^k$ gives the reverse inequality. Thus $|g_n|=3\cdot2^{n-2}$.
Take $n_0\geq3$ so large that $2\leq L_n\leq2^{n-4}$ for $n\geq n_0$.
The triangle inequality then gives $E_n\subset S_n^H(e)$, and
\begin{equation*}
 \#E_n\asymp L_n^4\asymp2^{4\beta n}.
\end{equation*}
Hence these complete balls satisfy \eqref{eq:Q-cluster-assumptions},
with $I$ containing every sufficiently large integer.
Theorems~\ref{thm:Q-clusters} and~\ref{thm:Q-thinning} give all the remaining
claims.
\end{proof}

For example, when $\beta=1/2$ and $\rho=1$, the random Heisenberg subset
has, almost surely, the spectrum
\[
 G_{1/2,1,4}(\theta)=
 \begin{cases}
 0,&0\leq\theta\leq1/2,\\
 4-2/\theta,&1/2<\theta<2/3,\\
 1,&2/3\leq\theta\leq1.
 \end{cases}
\]
The deterministic spectra are illustrated in
Figure~\ref{fig:heisenberg-spectra}.

\begin{figure}[htbp]
\centering
\begin{tikzpicture}
\begin{axis}[
 width=11.5cm,height=7cm,axis lines=left,
 xmin=-0.02,xmax=1.05,ymin=-0.12,ymax=4.25,
 xlabel={$\theta$},ylabel={$D_\theta$},
 xtick={0,0.2,0.4,0.7,1},ytick={0,1,2,3,4},
 tick label style={font=\small},label style={font=\small},
 legend style={at={(0.03,0.91)},anchor=north west,font=\small,
 draw=none,fill=white},legend cell align=left]
\addplot[black,dashdotted,thick] coordinates {(0,4) (1,4)};
\addlegendentry{$H$: $D_\theta=4$}
\addplot[spectrumblue,very thick] coordinates {(0,0) (0.7,0)};
\addlegendentry{$E_{0.3}^H$}
\addplot[spectrumblue,very thick,domain=0.7:1,samples=101,forget plot]
 {4*(1-0.7/x)};
\addplot[spectrumorange,very thick,dashed] coordinates {(0,0) (0.4,0)};
\addlegendentry{$E_{0.6}^H$}
\addplot[spectrumorange,very thick,dashed,domain=0.4:1,samples=121,forget plot]
 {4*(1-0.4/x)};
\addplot[spectrumgreen,very thick,densely dotted] coordinates {(0,0) (0.2,0)};
\addlegendentry{$E_{0.8}^H$}
\addplot[spectrumgreen,very thick,densely dotted,domain=0.2:1,samples=151,forget plot]
 {4*(1-0.2/x)};
\addplot[only marks,mark=*,spectrumblue,forget plot] coordinates {(1,1.2)};
\addplot[only marks,mark=square*,spectrumorange,forget plot] coordinates {(1,2.4)};
\addplot[only marks,mark=triangle*,spectrumgreen,forget plot] coordinates {(1,3.2)};
\end{axis}
\end{tikzpicture}
\caption{The constant spectrum of the Heisenberg group and the spectra
$F_\beta$ of its sparse ball clusters. The transition is at
$\theta=1-\beta$, and the endpoint value is $4\beta$. Both the lower and
upper dimensions equal the displayed functions.}
\label{fig:heisenberg-spectra}
\end{figure}

\begin{corollary}\label{cor:heisenberg-qi-application}
Let $Y$ be an unbounded metric space satisfying uniform coarse packing.
If $Y$ is quasi-isometric to $H$, then both its discrete intermediate
dimensions equal $4$ for every $\theta$. If $Y$ is quasi-isometric to
$E_\beta^H$, then both dimensions equal $F_\beta(\theta)$.
In particular, these conclusions are unchanged when the ambient word
metric on $H$ is replaced by that associated with any other finite
symmetric generating set, or when $E_\beta^H$ is replaced by a subset of
$H$ at finite Hausdorff distance from it.
\end{corollary}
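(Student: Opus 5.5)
The plan is to obtain every assertion from Theorem~\ref{thm:mainG}, with the values supplied by Corollary~\ref{thm:heisenberg-spectrum}. The choice of base point does not matter, by Remark~\ref{cor:basepoint}. The quasi-isometry hypotheses are checked first. By Lemma~\ref{lem:heisenberg-growth}, $H$ and all of its metric subspaces, in particular $E_\beta^H$, satisfy uniform coarse packing. By assumption, $Y$ is unbounded and satisfies uniform coarse packing. Both $H$ and $E_\beta^H$ are unbounded pointed metric spaces; for $E_\beta^H$ this holds because $|g_n|=3\cdot2^{n-2}\to\infty$. If $Y$ is quasi-isometric to $H$, then Theorem~\ref{thm:mainG} gives $\uDim_\theta Y=\uDim_\theta H=4$ and $\lDim_\theta Y=\lDim_\theta H=4$ for every $\theta\in[0,1]$, using Corollary~\ref{thm:heisenberg-spectrum}. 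The same argument with $E_\beta^H$ in place of $H$ gives the value $F_\beta(\theta)$ for both dimensions.

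For the ``in particular'' statements, the standard facts will be verified directly, so that both reduce to the first part. Let $S'$ be another finite symmetric generating set, with word metric $d'$. Put $\lambda=\max_{s\in S'}|s|$ and $\lambda'=\max_{s\in S}|s|'$, where $S=\{a^{\pm1},b^{\pm1}\}$. Rewriting each generator as a word in the other set gives
\[
|g|\le\lambda|g|'\qquad\text{and}\qquad |g|'\le\lambda'|g|.
\]
Left invariance then shows that the identity map is bi-Lipschitz, and hence a quasi-isometry $(H,d_H)\to(H,d')$. Similarly, the identity restricted to $E_\beta^H$ is a quasi-isometry for the induced metrics. Now $(H,d')$, and every subspace of it, satisfies uniform coarse packing. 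Here is why: $d'$-balls are comparable to $d_H$-balls, so \eqref{eq:Q-growth} holds for $d'$ with $Q=4$ after adjusting $v_\pm$, and Lemma~\ref{lem:Q-packing} then applies. Alternatively, bi-Lipschitz maps preserve the packing function up to a change of $\Lambda$. The first part therefore applies with $Y=(H,d')$ or $Y=(E_\beta^H,d')$.

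Next, let $E'\subset H$ lie at finite Hausdorff distance $\kappa$ from $E_\beta^H$. For each $x\in E'$, choose $f(x)\in E_\beta^H$ with $d_H(x,f(x))\le\kappa$. Then $f$ is a $(1,2\kappa,2\kappa)$-quasi-isometry. Indeed, the distortion bounds follow from the triangle inequality. For density, each $y\in E_\beta^H$ has some $x\in E'$ within $\kappa$ of it, so $d_H(y,f(x))\le 2\kappa$. The set $E'$ is unbounded because it is within bounded distance of an unbounded set. It satisfies uniform coarse packing as a subspace of $H$ (Lemma~\ref{lem:heisenberg-growth}). Applying the first part with $Y=E'$ completes the argument.

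No real obstacle is expected. The only point needing care is confirming that uniform coarse packing transfers to the alternative word metric. The cleanest way is through the two-sided volume bound \eqref{eq:Q-growth} for $d'$ together with Lemma~\ref{lem:Q-packing}; this also covers the subspace $E_\beta^H$ under $d'$.
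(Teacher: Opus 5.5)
Your proposal is correct and follows the paper's own proof. You apply Theorem~\ref{thm:mainG}, with Lemma~\ref{lem:heisenberg-growth} supplying uniform coarse packing for $H$ and its subspaces, use the bi-Lipschitz equivalence of word metrics for other generating sets, and use a nearest-point quasi-isometry for sets at finite Hausdorff distance. You also fill in details the paper leaves implicit: the explicit bi-Lipschitz and quasi-isometry constants, and the transfer of \eqref{eq:Q-growth} to the new word metric $d'$.
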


\begin{proof}
Apply Theorem~\ref{thm:mainG}, using
Lemma~\ref{lem:heisenberg-growth} for the source spaces.
Two finite generating sets give bi-Lipschitz equivalent word metrics;
the growth and packing estimates therefore hold for each choice.
Two subsets at finite Hausdorff distance are quasi-isometric by choosing
nearby points, and subspaces of $H$ satisfy uniform coarse packing.
\end{proof}

\subsection{An application to random-walk ranges}
The preceding thinning theorem produces nonconstant random spectra.
A complementary application to a standard stochastic process follows
from the endpoint identities and monotonicity. The classical discrete
Hausdorff dimension theorem for random walks is due to Barlow and Taylor
\cite{BT}. We use the independently accessible formulation in
\cite[Theorem 1.1]{XZ}, specialized to constant conductances.

\begin{corollary}\label{cor:simple-random-walk}
Let $(Z_k)_{k\geq0}$ be simple symmetric nearest-neighbor random walk on
$\Z^d$, started at the origin, and let
$\mathcal R=\{Z_k:k\geq0\}$. With the metric induced from $\R^d$, almost
surely and simultaneously for all $\theta\in[0,1]$,
\begin{equation}\label{eq:random-walk-spectrum}
 \lDim_\theta\mathcal R=\uDim_\theta\mathcal R=\min\{d,2\}.
\end{equation}
\end{corollary}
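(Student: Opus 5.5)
The plan is to squeeze the whole spectrum between its two endpoints. By Lemma~\ref{lem:theta-monotonicity} and the ordering of lower and upper dimensions, for every $\theta\in[0,1]$,
\[
\lDim_0\mathcal R\leq\lDim_\theta\mathcal R\leq\uDim_\theta\mathcal R\leq\uDim_1\mathcal R.
\]
So it suffices to find one event of probability one on which $\lDim_0\mathcal R\geq\min\{d,2\}$ and $\uDim_1\mathcal R\leq\min\{d,2\}$. This automatically gives the conclusion simultaneously for all $\theta$, with no uncountable intersection of events.

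For the lower endpoint, Proposition~\ref{gjH} gives $\lDim_0\mathcal R=\hdim\mathcal R$. I will then invoke the Barlow--Taylor theorem in the form \cite[Theorem 1.1]{XZ} with constant conductances: almost surely $\hdim\mathcal R=\min\{d,2\}$. For $d\geq3$ only the lower bound $\hdim\mathcal R\geq2$ is needed. For $d=1,2$ one needs $\hdim\mathcal R=d$; recurrence gives $\mathcal R=\Z^d$ almost surely in these cases, and then either the cited theorem or Theorem~\ref{thm:Q-growth} with $Q=d$ applies, together with Lemma~\ref{lem:euclidean-intrinsic}.

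For the upper endpoint, $\mathcal R\subset\Z^d$ is uniformly discrete and locally finite. If it is unbounded, which it is almost surely, Proposition~\ref{dBD} gives $\uDim_1\mathcal R=\dim_{\mathrm{UM}}\mathcal R$. The trivial bound $A_n(\mathcal R)\leq\#(\Z^d\cap B(0,2^n))\lesssim2^{nd}$ handles $d\leq2$. For $d\geq3$ I need $\dim_{\mathrm{UM}}\mathcal R\leq2$ almost surely, meaning that for each $\varepsilon>0$, eventually $\#(\mathcal R\cap B(0,2^n))\leq2^{(2+\varepsilon)n}$. The number of visited sites in $B(0,2^n)$ is at most the total time the walk spends in that ball. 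Transience gives
\[
\mathbb E\bigl[\#\{k:|Z_k|<2^n\}\bigr]=\sum_{|x|<2^n}G(0,x)\lesssim\sum_{|x|<2^n}(1+|x|)^{2-d}\asymp2^{2n},
\]
using the Green function bound $G(0,x)\asymp|x|^{2-d}$. Markov's inequality bounds the probability of exceeding $2^{(2+\varepsilon)n}$ by $2^{-\varepsilon n}$. These are summable, so Borel--Cantelli gives the bound for all large $n$; intersecting over rational $\varepsilon$ gives $\dim_{\mathrm{UM}}\mathcal R\leq2$. Alternatively, this upper bound may be part of what \cite{XZ} records directly.

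The main obstacle is making sure the cited discrete Hausdorff dimension result is stated for the same ball-cover formulation and the same shells used in Section~\ref{sec:hausdorff}. If its normalization differs, I would use the Barlow--Taylor equivalence of formulations \cite{BT,DS}, adjusting constants via Lemma~\ref{kbj}. Finally, I intersect the event of full Hausdorff dimension with the Borel--Cantelli event above.
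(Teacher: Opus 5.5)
Your proposal is correct and follows essentially the same route as the paper. You squeeze the spectrum between $\hdim\mathcal R$ from below, using Proposition~\ref{gjH} with \cite[Theorem 1.1]{XZ} for $d\geq3$ and recurrence with Theorem~\ref{thm:Q-growth} for $d\leq2$, and $\dim_{\mathrm{UM}}\mathcal R$ from above, using Proposition~\ref{dBD} with a Green-function, Markov and Borel--Cantelli bound. The only cosmetic difference is that you bound the number of visited sites by the occupation time, with expectation $\sum_{|x|<2^n}G(0,x)$, whereas the paper computes the exact expectation $\sum_x\mathbb P_0\{T_x<\infty\}=\sum_x g(0,x)/g(0,0)$; both give $O(2^{2n})$.
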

\begin{proof}
For $d=1,2$, recurrence (\cite[Theorem 4.1.1]{LL10}) and
irreducibility imply that every lattice point
is visited almost surely. Countability gives $\mathcal R=\Z^d$ on a single
event of probability one. Theorem~\ref{thm:Q-growth}, with $Q=d$, and
Lemma~\ref{lem:euclidean-intrinsic} prove the assertion in this case.

Suppose $d\geq3$. Write
$g(0,x)=\sum_{k\geq0}\mathbb P_0\{Z_k=x\}$ for the Green function.
By \cite[Theorem 4.3.1]{LL10},
$g(0,x)\asymp(1+|x|)^{2-d}$. Taking every conductance equal to one
in \cite[Theorem 1.1]{XZ} gives $\hdim\mathcal R=2$ almost surely:
the embedded jump chain is simple symmetric random walk, and the
non-explosive constant-rate holding times leave its range unchanged.
We record the upper mass bound explicitly. Let
$T_x=\inf\{k\geq0:Z_k=x\}$ and
$M_R=\#(\mathcal R\cap B(0,R))$.
The strong Markov property and translation invariance imply
$\mathbb P_0\{T_x<\infty\}=g(0,x)/g(0,0)$. Hence
\[
 \mathbb E_0 M_R
 =\sum_{x\in\Z^d\cap B(0,R)}\mathbb P_0\{T_x<\infty\}
 \lesssim\sum_{|x|<R}(1+|x|)^{2-d}\lesssim R^2.
\]
The final bound follows by summing over dyadic lattice annuli;
the interchange of expectation and the nonnegative point indicators
is justified by Tonelli's theorem.
For every $\varepsilon>0$, Markov's inequality yields
\[
 \mathbb P_0\{M_{2^n}>2^{(2+\varepsilon)n}\}
 \lesssim2^{-\varepsilon n}.
\]
Apply Borel--Cantelli for $\varepsilon=1/j$, $j\in\N$, and intersect
the resulting countably many events to obtain
$\dim_{\mathrm{UM}}\mathcal R\leq2$ almost surely.
On the event where this bound and $\hdim\mathcal R=2$ both hold,
Propositions~\ref{gjH} and~\ref{dBD}, and parameter monotonicity, give
\[
 2=\hdim\mathcal R
 \leq\lDim_\theta\mathcal R
 \leq\uDim_\theta\mathcal R
 \leq\uDim_1\mathcal R
 =\dim_{\mathrm{UM}}\mathcal R\leq2
\]
for every $\theta\in[0,1]$. This proves simultaneity as well as
\eqref{eq:random-walk-spectrum}.
\end{proof}

\begin{remark}
Corollary~\ref{cor:simple-random-walk} is an application of known
random-walk endpoint results to the present family of dimensions.
In contrast, Theorem~\ref{thm:Q-thinning} computes an entire nonconstant
spectrum from uniform probabilistic estimates on the local occupation
counts. These two examples illustrate respectively the collapse of the
spectrum when the endpoint dimensions agree and its variation when spatial
clustering and point counts impose different covering constraints.
\end{remark}

\section*{Statements and Declarations}
\subsection*{Funding}
This work was supported by the National Natural Science Foundation of China
(Grant Nos. 12171107 and 12571096).

\subsection*{Data Availability}
This article is theoretical. No research datasets were generated or analysed.

\subsection*{Competing Interests}
The authors declare that they have no financial or non-financial interests
relevant to the content of this article.

\subsection*{Use of generative AI}
During preparation of this manuscript, the authors used ChatGPT (OpenAI) to assist with language
revision. Responsibility for the mathematical
arguments, citations, and final text rests with the authors.

\end{document}